\documentclass[matann2]{svjour1}

\usepackage{amsmath}
\usepackage{amsfonts}
\usepackage{latexsym}
\usepackage{mathrsfs}
\usepackage{array}
\usepackage{amscd}
\usepackage{a4}
\usepackage[mathcal]{euscript}
\usepackage{amssymb}
\usepackage{epsf, epic, eepic} 
\usepackage[alphabetic]{amsrefs} %% alphabetic citations
\usepackage{theorem}
\usepackage{enumerate}
\usepackage{relsize}
\usepackage{times}      %% text in times
\usepackage{txfonts}    %% (Elsevier) 
\usepackage{tikz}
\usetikzlibrary{arrows.meta}
\usetikzlibrary{decorations.pathmorphing}
\usetikzlibrary{positioning}
\usepackage{graphicx}
\usepackage[utf8]{inputenc}
\usepackage{color,soul,soulutf8}
\spnewtheorem*{remark*}{Remark}{\it}{\rm}
\spnewtheorem*{remarks*}{Remarks}{\it}{\rm}
\spnewtheorem*{firstproof*}{First proof}{\it}{\rm}
\spnewtheorem*{secondproof*}{Second proof}{\it}{\rm}
\DeclareMathAlphabet{\ams}{U}{msb}{m}{n}
\DeclareMathAlphabet{\goth}{U}{euf}{m}{n}
\def\Z{\ams{Z}}\def\R{\ams{R}}

\def\F{\ams{F}}
\def\X{\goth{X}}
\def\symn{S_{\kern-1pt n}}
\def\sym5{S_{\kern-1pt 5}}
\def\ss{\sigma}
\def\tt{\tau}
\def\ra{\rightarrow}

\def\St{\text{St}}
\def\Lk{\text{Lk}}
\def\partition{\,\,\begin{tikzpicture}
\draw[line width=0.7pt](0,0)--(0,0.2);
\draw[line width=0.7pt](0,0.1)--(0.175,0.1);
\end{tikzpicture}\,}

\newcommand{\Sym}[1]{S_{\kern-1pt {#1}}}
\newcommand{\rmod}{\vrule width 0mm height 0 mm depth
  0mm_R\mathbf{Mod}}
\title{Clique complexes of groups and order sheaves}

\author{Michael Bate %\thanks{} 
$\cdot$
Brent Everitt %\thanks{} 
$\cdot$
Sam Ford \thanks{The authors owe a debt of gratitude to Steve
Donkin for suggesting to them the idea behind the clique complex
and order sheaves. The second author thanks Paul Turner for
helpful discussions.}
}

\institute{
Department of Mathematics, 
University of York, 
Heslington, 
York
YO10 5GY, United Kingdom. 
\email{michael.bate@york.ac.uk,
brent.everitt@york.ac.uk,
sam.ford7@gmx.co.uk}. 
}

\titlerunning{Clique complexes of groups and order sheaves}
\authorrunning{Michael Bate, Brent Everitt, Sam Ford}

\begin{document}

\maketitle

\begin{abstract}
  We study the clique complex $X_G$ of a finite group: a simplicial
  complex whose faces are in one-to-one correspondence with the
  elements of the group, and arise via a decomposition of group
  elements into $p$-elements for primes $p$. The $1$-skeleton
  of this complex then has the classical Gruenberg-Kegel
  graph $\Gamma_G$ as a quotient and a finite simple group
  is determined up to isomorphism by its clique complex.
  The clique complex can also be endowed with
  a combinatorial sheaf --- called the order sheaf ---
  and the resulting homology can be computed
  for an arbitrary simplicial
  complex in terms of the homology of the links of vertices.
  The homology of the clique complex with coefficients in this
  order sheaf is then computed for some special classes of groups,
  in particular those for which the centralizers of elements
  are nilpotent. 
\end{abstract} 

\maketitle

%%%%%%%%%%%%%%%%%%%%%%%%%%%%%%%%%%%%%%%%%%%%%%%%%%%%%
%%%%%%%%%%%%%%%%%%%%%%%%%%%%%%%%%%%%%%%%%%%%%%%%%%%%%

\section*{Introduction}
\label{introduction}

The augmentation ideal of a finite group $G$ is the kernel of 
the map $\Z G\ra \Z$ induced by $g\mapsto 1$, where $\Z G$ is the 
integral group ring. The question of whether this ideal decomposes
as a $\Z G$-module has been studied in finite group theory, 
group cohomology, representation theory and homotopy theory.
In the 1970's, Gruenberg and Kegel introduced a graph $\Gamma_G$
whose connectedness properties control when these decompositions
happen. This Gruenberg-Kegel graph has subsequently been studied
for its connectedness, structure and as an invariant of $G$.

In this paper we study a higher dimensional analogue to the
Gruenberg-Kegel graph that we call the clique complex
$X_G$. This complex was first introduced by Steve Donkin
in \cite{MR2271571}*{page 54}.
It
is a simplicial complex whose faces arise from the decompositions
of group elements (Lemma \ref{lemma1} below) into mutually commuting
$p$-elements, for primes $p$. The vertices of $X_G$ are thus the 
elements of prime power order; two such, $g_0$ and $g_1$,
span an edge $g_0g_1$ when $g_0$ is a $p$-element, $g_1$ is a $q$-element,
with $p\not= q$, and $g_0,g_1$ commute. Three such $g_0,g_1,g_2$
span a $2$-face analogously, and so on. The resulting complex has
faces in 1-1 correspondence with the non-trivial elements of $G$. 

The clique complex fits into a hierarchy of invariants for groups,
with $X_G$ determining $\Gamma_G$, but not the other way around. 
For example, it is known that apart from a finite set of exceptional
simple groups,
there are only finitely many groups having the same 
Gruenberg-Kegel graph as a given finite simple group. On the other hand,
a finite simple group is determined up to isomorphism by
its clique complex --- see Theorem \ref{theorem:simple:groups}.

As we mentioned above, the connectedness of $\Gamma_G$ has structural
implications for $G$. As the connectedness of a complex $X$
is measured by the simplicial homology $H_0(X)$ in degree $0$,
we are naturally led to study the homology of the clique complex
more generally. It turns out, for a variety of reasons, to be more
fruitful to investigate the \emph{combinatorial sheaf\/}
homology of $X_G$. Combinatorial sheaves on posets,
simplicial complexes, and more generally small categories,
have found application recently to a number of areas:
Khovanov homology \cite{MR1740682} --- see also \cite{MR3276847} ---
and invariants of hyperplane arrangements \cite{MR4401823} are two such.
Sheaf homology often teases out more structure than ordinary homology does.
In the case of clique complexes equipped with what we call order sheaves,
there is an added, slightly unexpected, incentive: we will see 
--- Theorem \ref{clique:complex:theoremeuyrh9erw} below ---
that the sheaf homology can be computed with only partial
knowledge of the ordinary homology. 

The paper is organised as follows. The clique complex is defined
as an abstract simplicial complex in Section \ref{section:clique:general}, 
preceded in Section \ref{section:clique:complexes}
by the basic notions on simplicial complexes that we will need. 
There are two kinds of face of particular 
significance --- the facets and the isolated vertices --- and
these are described in Section \ref{section:clique:topological} 
along with the 
observation that the groups with $0$-dimensional clique
complex are precisely the $EPPO$-groups. A picture of a clique
complex ``up to conjugacy" is described in 
Section \ref{section:clique:schematic}
and this can sometimes be used to demonstrate connectedness
properties of $X_G$. Section \ref{section:clique:invariants}
discusses the extent to which a 
finite group is determined by various invariants such as
the Grunberg-Kegel graph, the spectrum and the clique
complex $X_G$.

Sections \ref{section:clique:complex:symmetric} to 
\ref{section:clique:complex:twocomponents} describe the clique
complex in more detail for the symmetric groups, the
nilpotent groups and the simple groups. In all cases, 
the dimension of $X_G$ is unbounded over each of these
families. Some of these families --- the symmetric groups,
alternating groups, nilpotent groups --- display an interesting
form of connectedness that we call mono-connectedness; see
Section \ref{section:clique:topological} for the definition.
On the other hand, among the groups of Lie type, 
mono-connectedness seems to be
quite rare; among the sporadic groups, some are
mono-connected and some not. The complete picture here
is unknown.

Section \ref{sheaves} contains the basics on combinatorial
sheaves and the homology of a simplicial complex with coefficients
in a combinatorial sheaf. We introduce the order sheaf on
a general simplicial complex and show how the resulting sheaf homology
reduces to the ordinary homology of the links of vertices
with (constant) coefficients a certain finite cyclic group.
If the simplicial complex is the clique complex of a group,
then the sheaf homology is determined by the ordinary homology of
the clique complexes of the centralizers of the elements of
prime power order. 

Section \ref{section:examples} 
implements these results for certain groups. 
It turns out that the nilpotent groups lend themselves very nicely
to this. We avail ourselves of the theory of shellings, which allows
us to compute the ordinary homology for the nilpotent groups,
but also the sheaf homology, as shellability is a property
of complexes that is inherited by links. As the homology of
the clique complexes of centralizers plays an important role in these
calculations, it is natural to then consider those groups
for which the centralizers of elements are themselves
nilpotent; these are the so-called $CN$-groups.
The paper concludes with a quite specific 
calculation of the sheaf homology
of the simple $CN$-groups. 

%%%%%%%%%%%%%%%%%%%%%%%%%%%%%%%%%%%%%%%%%%%%%%%%%%%%%
%%%%%%%%%%%%%%%%%%%%%%%%%%%%%%%%%%%%%%%%%%%%%%%%%%%%%

\section{Clique complexes}
\label{section:clique}

Clique complexes are simplicial complexes, 
and we start in Section \ref{section:clique:complexes}
with the basic notions that we will need; see
\cite{MR1402473}*{Section 2.3} or 
\cite{MR0210112}*{Section 3.1} for more details.
Section \ref{section:clique:general} defines the complex for a
general finite group and 
Section \ref{section:clique:topological}
lists its basic properties. As the size of $G$ gets larger,
it becomes harder to draw the clique complex of $G$.
Section \ref{section:clique:schematic} describes
an approximate picture, called a schematic, that gives
a pictorial flavour of $X_G$, even for quite large groups.
(For example, Figure \ref{figure2} shows the schematic 
for the Mathieu group $M_{24}$, of order 
$2^{10}\cdot 3^3\cdot 5\cdot 7\cdot 11\cdot 23$.)
Finally, Section \ref{section:clique:invariants} shows how
the clique complex fits into a hierarchy of invariants,
and that a finite simple group is determined by its 
clique complex.

%Sections
%\ref{section:clique:complex:symmetric}-\ref{section:clique:complex:twocomponents}
%give more
%detailed structure of the clique complex for some classes of groups
%--- especially the simple groups --- with a particular focus on their dimension
%and connectedness. The section then ends with a conjecture.

%%%%%%%%%%%%%%%%%%%%%%%%%%%%%%%%%%%%%%%%%%%%%%%%%%%%%

\subsection{Simplicial complexes}
\label{section:clique:complexes}

An abstract simplicial complex $X$ --- with vertex 
set $X_0$ --- is 
a set of distinguished subsets of $X_0$, with the property 
that $\{x\}\in X$ for all $x\in X_0$, and
if $\ss\in X$ and $\tt\subseteq\ss$ then $\tt\in X$. 
If $\ss\in X$ with $|\ss|=i+1$, then 
$\ss$ is called an $i$-(dimensional) face of $X$.  
Write $X_i$ for the set of $i$-faces and $X^{(i)}$
for the \emph{$i$-skeleton\/}: the union of
the $\ell$-faces, for $0\leq \ell\leq i$.
We allow the empty complex, with empty vertex set. 

A face $\ss$ that is maximal under inclusion is called a 
\emph{facet\/}
and a complex where all the facets have the same dimension
is said to be \emph{pure\/}. 
The dimension of $X$ is the maximal $i$ for 
which
there exists an $i$-facet.
An $i$-simplex
$\overline{\ss}$ is the
simplicial subcomplex obtained by considering an $i$-face $\ss$ and
all of its subsets; in particular, if $\ss$ is a 
facet then $\overline{\ss}$ is
a maximal subsimplex.

A \emph{subcomplex\/} $Y\subseteq X$ has vertex set $Y_0\subseteq X_0$
and faces those $\ss\in X$ where $\ss\subseteq Y_0$. 
A simplicial map
$f:X\ra Y$ of simplicial complexes is a map $f:X_0\ra Y_0$ such that
$f(\ss)\in Y$ for all $\ss\in X$. 
A simplicial complex $X$ is \emph{connected\/} when the $1$-skeleton 
$X^{(1)}$ is a connected simplicial graph. The \emph{connected
components\/} are the maximal connected subcomplexes. We take
the empty complex to be connected. 

A finite $d$-dimensional simplicial complex $X$
(and all our complexes will be finite)
has a geometric realisation $|X|$ in $\R^{2d+1}$, consisting
of geometric simplices suitably glued together. This allows
us to attach topological adjectives to $X$: it is
contractible, deform retracts, etc., when this is
the case for $|X|$.
The faces of $X$ form a poset $P_X$ under inclusion,
called the face poset of $X$.

If $\ss$ is a face, then the star $\St_\ss$ is
the subcomplex $\St_\ss=\{\tau\in X:\tau\cup\ss\in X\}$
and the link is the subcomplex
$\Lk_\ss=\{\tau\in X:\tau\cup\ss\in X,\tau\cap\ss=\varnothing\}
=\{\tau\in\St_\ss:\tau\cap\ss=\varnothing\}$.
If $\ss=x$ is a vertex, then the star $\St_x$
deform retracts onto $x$; in particular, $\St_x$
is contractible. 

If $X$ and $Y$ are complexes, then their
join $X\ast Y$ is the complex with vertex set $X_0\cup Y_0$
and faces the $\ss\cup\tt$, where $\ss$ is a face of $X$ and $\tt$
is a face of $Y$. Both $X$ (the faces of the form $\ss\cup\varnothing$) and
$Y$ (the faces $\varnothing\cup\tt$) are subcomplexes of the join
and $\dim X*Y=\dim X+\dim Y+1$.

%%%%%%%%%%%%%%%%%%%%%%%%%%%%%%%%%%%%%%%%%%%%%%%%%%%%%

\subsection{The clique complex $X_G$ of a finite group $G$}
\label{section:clique:general}

The construction starts with the
following decomposition:

\begin{lemma}
\label{lemma1}
Let $G$ be a finite group. If $g\in G$ has order $p_0^{m_0}\ldots p_i^{m_i}$,
with the $p_j$ distinct primes, then $g$ has
a unique decomposition
$g=g_0g_1\ldots g_i$,
with $g_j$ having order 
a power of $p_j$ %(and so is a \emph{$p_i$-element})
and $g_jg_k=g_kg_j$ for all 
$j,k$. 
\end{lemma}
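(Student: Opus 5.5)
Since every element of $\langle g\rangle$ commutes with $g$, and the requested factors are to commute, the plan is to work entirely inside the cyclic group $\langle g\rangle\cong\Z/n$, where $n=p_0^{m_0}\cdots p_i^{m_i}$. Existence will come from the Chinese remainder theorem. Uniqueness will come from showing that any such decomposition necessarily lives inside $\langle g\rangle$, where the primary components are determined.

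\emph{Existence.} Put $n_j=n/p_j^{m_j}$. The integers $n_0,\ldots,n_i$ have greatest common divisor $1$, so we can choose integers $a_j$ with $\sum_j a_jn_j=1$. Define $g_j=g^{a_jn_j}$. These factors are powers of $g$, so they commute pairwise, and their product is $g^{\sum a_jn_j}=g$. Next, $g_j^{p_j^{m_j}}=g^{a_jn}=1$, so each $g_j$ is a $p_j$-element. (We do not need the order of $g_j$ to be exactly $p_j^{m_j}$; being a power of $p_j$ suffices.)

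\emph{Uniqueness.} Suppose $g=h_0h_1\cdots h_i$ with each $h_j$ a $p_j$-element and the $h_j$ pairwise commuting. Then $A=\langle h_0,\ldots,h_i\rangle$ is abelian, and it is the internal direct product of the cyclic groups $\langle h_j\rangle$, because these have coprime orders. In particular $g\in A$, and $g^{n_j}$ has $j$-th component $h_j^{n_j}$ and every other component trivial, since $p_k^{m_k}$ divides $n_j$ for $k\neq j$. We need the order of $h_k$ to divide $p_k^{m_k}$. This holds because $g^{p_k^{m_k}\cdot(\text{stuff})}$ projects onto the $k$-th factor, and the order of $g$ is the product of the orders of the $h_k$. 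So $|h_k|=p_k^{m_k}$ up to a trivial factor: more precisely, $|g|=\prod|h_k|$ in a direct product of coprime cyclic groups, which forces $|h_k|=p_k^{m_k}$. Consequently $h_j=(h_j^{n_j})^{b_j}=(g^{n_j})^{b_j}$, where $b_j$ is an inverse of $n_j$ modulo $p_j^{m_j}$. Hence $h_j$ is a power of $g$ depending only on $g$, and it equals $g_j$ by the same computation applied to the $g_j$.

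The only point needing care is the uniqueness step. There the key is to pass to the abelian group generated by the $h_j$, so that $g$ and all candidate factors sit in one direct product of coprime-order cyclic groups. Once that is done, the projection to each primary component is given by a fixed power map $x\mapsto x^{n_jb_j}$, which is independent of the decomposition. The remaining step, showing that the $h_j$ are $p_j$-elements whose orders are exactly $p_j^{m_j}$, is routine bookkeeping with orders in such a direct product.
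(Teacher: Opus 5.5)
Your proof is correct. The existence half is the paper's B\'ezout argument done in one step; the paper instead splits off one prime at a time via $g=g^{bm}g^{ap_0^{m_0}}$ and iterates. Your uniqueness argument, however, takes a different route.

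The paper never determines the orders of the $h_j$. It observes that $h_0$ commutes with $g$, and hence with each $g_j$, since each $g_j$ is a power of $g$. So $h_0^{-1}g_0$ is a $p_0$-element commuting with $g_1\cdots g_i$. Because $h_1\cdots h_i=(h_0^{-1}g_0)g_1\cdots g_i$ has order prime to $p_0$, this forces $h_0=g_0$, and an induction on the number of primes finishes.

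You instead first force $|h_k|=p_k^{m_k}$ from $|g|=\prod_k|h_k|$, and then show $h_j=g^{n_jb_j}$ for every $j$ at once. Your version avoids the induction and shows directly that every admissible factorisation consists of the same canonical powers of $g$, with a closed formula for the $p_j$-part. The paper's version needs slightly less: only that a nontrivial $p_0$-element times a commuting $p_0'$-element has order divisible by $p_0$. It never needs the exact orders of the $h_j$.

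One presentational fix: delete the phrase about ``$g^{p_k^{m_k}\cdot(\text{stuff})}$'', which does not make sense as written. Then move the order computation that follows it (the order of a product of commuting elements of coprime orders is the product of their orders, plus unique factorisation) to before the point where you use $h_k^{n_j}=1$, since that computation is what justifies it.
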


\begin{proof}
To see the decomposition, write the order of $g$
as $p_0^{m_0}m$, and let $a,b$ be integers with $ap_0^{m_0}+bm=1$.
Then:
$$
g=g^{bm}g^{ap_0^{m_0}}:=g_0g',
$$
with $g_0,g'$ commuting, 
$g_0$ a $p_0$-element and  
$g'$ of order dividing $m=p_1^{m_1}\ldots p_i^{m_i}$.
Now continue with $g'$ and the prime $p_1$.
To see the uniqueness, 
suppose that 
$$
g = h_0h_1\ldots h_i
$$
is another such decomposition.
Then let 
$$
h = h_0^{-1} g = h_1\ldots h_i = (h_0^{-1}g_0) g_1 \ldots g_i.
$$
Since $h_0$ commutes with $g$ by hypothesis, 
and since each $g_j$ is constructed as a power of $g$, 
we have that $h_0$ commutes with every $g_j$.
But this means that $h_0^{-1}g_0$ is a $p_0$-element, 
and thus $h$ has order divisible by $p_0$ unless $h_0^{-1}g_0 = 1$.
The element $h = h_1\ldots h_i$ has order prime to $p_0$,
so $h_0 = g_0$, and now an easy induction completes 
the uniqueness proof.
\qed
\end{proof}

The \emph{clique complex $X_G$\/} of $G$ is the simplicial complex
whose vertices are 
the $1\not=g\in G$ of prime power order and with
$\{g_0,g_1,\ldots, g_i\}$
an $i$-face if and only if $g=g_0g_1\ldots g_i$ is the 
decomposition of Lemma \ref{lemma1}
for $g$. We will just write $g_0g_1\ldots g_i$ for
a face from now on.

There is thus
a 1-1 correspondence between the non-empty faces of $X_G$ and the nontrivial
elements of $G$, and
we transfer simplicial terminology to $G$ and its elements. A group $G$
thus has a dimension, is connected $\ldots$; elements have dimension, 
are faces, facets $\ldots$; and so on. 
We emphasise that $X_G$ is not just an abstract simplicial complex,
but an simplicial complex whose vertices correspond
(and hence can be labelled by) the $p$-elements of $G$. 
We will write $X_G^\circ$ for the abstract simplicial complex
underlying $X_G$.

The word ``clique'' is chosen for the following reason:
if $\Gamma$ is a simplicial graph, then the clique complex
of $\Gamma$ has $1$-skeleton $\Gamma$ and the vertices
$\{x_0,x_1,\ldots,x_i\}$ span an $i$-face if and only if any two of them are
joined by an edge. Thus, the $i$-faces are spanned by the vertices
that form a complete subgraph --- or clique --- in $\Gamma$.
If $\Gamma$ is the $1$-skeleton of $X_G$
then $\{g_0,g_1,\ldots, g_i\}$
form a clique if and only if
the product $g=g_0g_1\ldots g_i$ is a decomposition
of the form given in Lemma \ref{lemma1}.
The clique complex of a group is thus the clique complex
of its $1$-skeleton.

An isomorphism $f:X_G\ra X_H$ of clique complexes is 
an isomorphism $X_G^\circ\ra X_H^\circ$ 
between the underlying simplicial complexes
with the property that if $g\in G$ is an element of order
$p^n$ then $f(g)\in H$ is an element of order $p^n$. It is
clear that an isomorphism $G\ra H$ of groups induces an
isomorphism $X_G\ra X_H$ between their clique complexes. 
The converse is not true --- 
see Section \ref{section:clique:invariants} for an example
of non-isomorphic groups $G,H$ having isomorphic clique complexes.
An isomorphism $f:X_G\ra X_H$ does however induce
a bijection between $G$ and $H$ with the property that if $g,h\in G$ 
lie in the same face of $X_G$ with $g\cap h=\varnothing$
(which happens exactly when $g,h$ have relatively prime orders
and commute), then $f(gh)=f(g)f(h)$.

\vspace{1em}

The clique complex is partially functorial: 
an \emph{injective\/} homomorphism
$\varphi:H\ra G$ induces a simplicial map
$f_\varphi:X_H\ra X_G$ given by
$f_\varphi:g_0g_1\ldots g_i\mapsto \varphi(g_0)\varphi(g_1)\ldots \varphi(g_i)$.
%(If $\varphi$ has non-trivial kernel then this map
%will send vertices of $X_H$ to non-vertices of $X_G$;
%other way around?)
In particular, if $H$ is a subgroup of $G$, then $X_H$ is a
subcomplex of $X_G$.

For any $h\in G$ and $g=g_0g_1\ldots g_i$, the conjugate
$g^h:=hgh^{-1}$
decomposes as
$g^h=g_0^hg_1^h\ldots g_i^h$.
The group $G$ thus acts by simplicial automorphisms/conjugation on the clique
complex $X_G$.
This means that simplicial properties of elements depend only on the conjugacy
class of the element: the dimension of elements is constant across
a conjugacy class; all elements of a conjugacy class are either
facets or non-facets; and so on.

There are certain subcomplexes of the clique complex that will
be needed in some of our later calculations. If $p$ is a prime 
dividing the order of $G$ then the \emph{$p$-restricted 
clique complex} $X_G^p$ consists of the faces $g_0g_1\ldots g_i$
of $X_G$ where none of the $g_i$ are $p$-elements. It is easy
to see that this is a subcomplex of $X_G$.

%%%%%%%%%%%%%%%%%%%%%%%%%%%%%%%%%%%%%%%%%%%%%%%%%%%%%

\subsection{Topological properties of $X_G$}
\label{section:clique:topological}

A vertex of a simplicial complex is \emph{isolated\/} when it 
is not properly contained in any face; equivalently, an isolated
vertex is a $0$-dimensional facet. 

\begin{lemma}
  \label{lemma2}
  Let $X_G$ be the clique complex of $G$ and $g\in G$ a face.
  \begin{enumerate}[(i)]
  \item The dimension of $g$ is $\omega(g)-1$, where $\omega(g)$,
    the prime omega function, is the number of distinct prime
    divisors of the order of $g$.
  The dimension of $X_G$ is $\max_{g\in G}\{\omega(g)\}-1$. 
  \item The element $g$ is a facet if and only if for any prime $p$
    that divides the order of the centralizer of $g$, the prime
    $p$ also divides the order of $g$.
  \item The element $g$ is an isolated vertex if and only if
    $g$ is a $p$-element for some prime $p$ and the centralizer $C_g$
    of $g$ is a $p$-group.
  \end{enumerate}
\end{lemma}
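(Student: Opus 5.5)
All three parts follow from Lemma \ref{lemma1} and the bijection between nonempty faces of $X_G$ and nontrivial elements of $G$. The plan is to make precise which faces contain the face $g$, and then read off dimension, maximality and isolation.

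For (i), write the order of $g$ as $p_0^{m_0}\cdots p_i^{m_i}$ with $i+1=\omega(g)$ distinct primes. Lemma \ref{lemma1} gives $g=g_0g_1\cdots g_i$ with $g_j$ a nontrivial $p_j$-element; nontriviality holds because $g^{\text{order}/p_j^{m_j}}$ generates the same group as $g_j$. So the face has exactly $\omega(g)$ vertices and dimension $\omega(g)-1$. The dimension of $X_G$ is the maximum face dimension, which gives the second claim. (Strictly, the max should be over $g\ne 1$.)

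For (ii), the key step is to describe the faces properly containing $g=g_0\cdots g_i$. A proper coface is $g_0\cdots g_i h_1\cdots h_k$ with $k\ge1$, where each $h_l$ is a $q_l$-element for a new prime $q_l\notin\{p_0,\dots,p_i\}$, and all the vertices pairwise commute. The $h_l$ commute with each $g_j$ and hence with $g$. Conversely, each $g_j$ is a power of $g$ by the construction in Lemma \ref{lemma1}, so any $h\in C_g$ commutes with every $g_j$. It follows that $g$ fails to be a facet if and only if $C_g$ contains a nontrivial $q$-element for some prime $q$ not dividing $|g|$. In one direction, such an $h$ gives the face $g_0\cdots g_ih$, since its product has the right order and uniqueness in Lemma \ref{lemma1} identifies the decomposition. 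In the other direction, a proper coface supplies such an $h_1$. Finally, by Cauchy's theorem, $C_g$ contains a nontrivial $q$-element exactly when $q$ divides $|C_g|$. This proves (ii).

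For (iii), an isolated vertex is a $0$-dimensional facet. By (i), $g$ is a vertex exactly when $\omega(g)=1$, that is, when $g$ is a $p$-element. By (ii), it is then a facet exactly when every prime dividing $|C_g|$ equals $p$, that is, when $C_g$ is a $p$-group. The only delicate point in the whole proof is in (ii): checking that commuting with $g$ is the same as commuting with each $g_j$, and that the extended product really is the Lemma \ref{lemma1} decomposition of $gh$. Both follow because the $g_j$ are powers of $g$ and the decomposition is unique.
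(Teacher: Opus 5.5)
Your proof is correct and follows essentially the paper's argument: (i) is read off from the decomposition of Lemma~\ref{lemma1}; (ii) uses the fact that the $g_j$ are powers of $g$, so that $C_g$ consists exactly of the elements commuting with every $g_j$, and hence $g$ has a proper coface iff $C_g$ contains a $q$-element for a prime $q$ not dividing $|g|$; and (iii) is the $0$-dimensional case of (ii). You also make explicit two points the paper leaves tacit, namely Cauchy's theorem (to pass from a prime dividing $|C_g|$ to a $q$-element in $C_g$) and the uniqueness in Lemma~\ref{lemma1} (to identify $g_0\cdots g_i h$ as a genuine face).
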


\begin{proof}
The dimensions of $g$ and $G$ follow immediately
from the decomposition in
Lemma \ref{lemma1}. Given such a decomposition
$g=g_0g_1\ldots g_i$, the $g_j$ are powers of $g$, so that an $h\in G$
centralizes $g$ exactly when $h$ centralizes $g_j$ for all
$1\leq j\leq i$. Thus, $g$ is \emph{not\/} a facet
iff $hg_0g_1\ldots g_i$ is a face for some vertex $h$
iff $h$ centralizes $g$ and is a $p$-element with $p\not=p_j$
for any $j$
iff $p\not=p_j$ divides the order of the centralizer of $g$
iff $p$ divides the order of the centralizer of $g$ but
does not divide the order of $g$. Part (ii) follows.
Part (iii) follows from part (ii) because an isolated vertex
is precisely a $0$-dimensional facet. 
\qed
\end{proof}

\begin{lemma}
  \label{lemma4}
\begin{enumerate}[(i)]
\item $X_G$ is $0$-dimensional if and only if all 
non-trivial elements 
of $G$ have prime power order.
\item If $G,H$ have relatively prime orders then 
$X_{G\times H}$ is isomorphic to the join $X_G\ast X_H$.
\end{enumerate}
\end{lemma}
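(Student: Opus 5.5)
Part (i) follows directly from Lemma \ref{lemma2}(i). The dimension of $X_G$ is $\max_{g\in G}\{\omega(g)\}-1$, so $X_G$ is $0$-dimensional exactly when $\omega(g)\le 1$ for every $g$. This says every non-trivial element has order divisible by only one prime, that is, has prime power order. We will also make the convention explicit for the trivial group, where the complex is empty; the statement is then read as vacuous, or one notes that $\omega(1)=0$.

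For part (ii) we define the map directly on vertices. Since $|G|$ and $|H|$ are coprime, every element of prime power order in $G\times H$ lies in $G\times 1$ or in $1\times H$. Indeed, if $(a,b)$ has order $p^n$ then both $a$ and $b$ have $p$-power order, and $p$ cannot divide both $|G|$ and $|H|$. So the vertex set of $X_{G\times H}$ is the disjoint union of the vertex sets of $X_G$ and $X_H$, identified via $a\mapsto (a,1)$ and $b\mapsto(1,b)$. This identification preserves element orders, as an isomorphism of clique complexes requires.

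It remains to match faces. A face of $X_G\ast X_H$ is $\sigma\cup\tau$, with $\sigma=a_0\ldots a_j$ a face of $X_G$ (possibly empty) and $\tau=b_0\ldots b_k$ a face of $X_H$. Its vertices are pairwise commuting elements of distinct primes: the $a$'s commute among themselves, the $b$'s commute among themselves, $G\times1$ commutes with $1\times H$, and the primes of $G$ and $H$ are disjoint. By the uniqueness in Lemma \ref{lemma1}, it is therefore the decomposition of $(a_0\cdots a_j,\,b_0\cdots b_k)$ and hence a face of $X_{G\times H}$. Conversely, a face of $X_{G\times H}$ is a set of pairwise commuting prime power elements with distinct primes. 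By the remark above, it splits into the vertices lying in $G\times1$ and those lying in $1\times H$. Each part is a set of pairwise commuting elements of distinct primes, and so forms a face of $X_G$ or of $X_H$; here we use that $X_G$ is the clique complex of its $1$-skeleton.

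The only mildly delicate step is the observation that prime power elements of a coprime direct product lie in one factor, and this is where the coprimality hypothesis is used. The rest is bookkeeping with Lemma \ref{lemma1}.
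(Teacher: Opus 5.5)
Your proof is correct and is essentially the paper's argument. Part (i) is the observation that $X_G$ is $0$-dimensional exactly when every non-trivial element is a vertex. Part (ii) rests on the uniqueness in Lemma~\ref{lemma1}: the paper phrases this as the decomposition of $(g,h)$ being the concatenation of the decompositions of $g$ and $h$, while you phrase it through the vertex sets and the clique description of faces.

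Your explicit check that prime power elements of $G\times H$ lie in one factor fills in a step the paper leaves implicit. On the trivial group, the equivalence in (i) genuinely needs $G\neq 1$: the right-hand side then holds vacuously, while the empty complex has dimension $-1$ rather than $0$.
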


\begin{proof}
For part (i), the elements of $G$ all have prime
power order exactly when every element is a vertex
and this in turn happens when $X_G$ is $0$-dimensional.
For part (ii),
if $g=g_0g_1\ldots g_i$ and
$h=h_0h_1\ldots h_j$ are the decompositions
of $g\in G$ and $h\in H$, then 
$g_0g_1\ldots g_ih_0h_1\ldots h_j$ is the decomposition
of $(g,h)\in G\times H$.
\qed
\end{proof}

Groups in which every non-trivial element has prime power 
order are called 
\emph{EPPO-groups\/} or \emph{CP-groups}. Their study was 
started by Higman in \cite{MR0089205}. In 
\cite{MR0136646}*{Theorem 16}, Sukuki shows that the
non-Abelian simple EPPO-groups are
the $\mathrm{PSL}_2(q)$ for $q=5,7,8,9,17$, as well as
$\mathrm{PSL}_3(4)$, and the Suzuki groups 
$\mathrm{Suz}(8)$ and $\mathrm{Suz}(32)$.

Clique complexes are often disconnected, but 
for some families of groups this
disconnectedness happens in quite a
particular way: the non-isolated
vertices of $X_G$ form a single connected component.
Put loosely, $X_G$ 
consists of a connected component that is surrounded by a 
cloud of isolated vertices. We will call such groups
\emph{mono-connected\/}, and 
the connected component of the non-isolated vertices
the \emph{core\/} $X_\circ$ of $X=X_G$.
We will see below that $X_G$ is mono-connected
for all but finitely many $\Sym{n},A_n$, 
as well as the nilpotent groups and some of the sporadic
simple groups. 

When computing sheaf homology later in the paper, the 
results will be driven by the ordinary homology of the 
links of vertices. It turns out that these links are
\emph{almost\/} clique complexes too:

\begin{lemma}
\label{lemma:links}
    Let $g\in G$ be a $p$-element with centralizer $C_g$
    and let $\Lk_g$ be the link of the vertex $g$ in $X_G$.
    Then $\Lk_g=X_{C_g}^p$, the $p$-restricted clique complex
    of the centralizer.
\end{lemma}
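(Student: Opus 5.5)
The plan is to show that the two complexes have the same faces, by a double inclusion of face sets. Both are subcomplexes of $X_G$: the link by definition, and $X_{C_g}^p$ because $C_g\le G$ gives $X_{C_g}\subseteq X_G$ by the partial functoriality for injective homomorphisms. So it is enough to show that a face $\tau=h_0h_1\ldots h_j$ of $X_G$ lies in $\Lk_g$ exactly when it lies in $X^p_{C_g}$.

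First, suppose $\tau\in\Lk_g$. Then $\tau\cup\{g\}$ is a face of $X_G$ and $g\notin\tau$. By the clique description of faces, $g$ commutes with every $h_k$ and has order coprime to every $h_k$. Since $g$ is a $p$-element, none of the $h_k$ is a $p$-element. Each $h_k$ centralizes $g$, so $h_k\in C_g$. The $h_k$ have pairwise coprime prime-power orders and commute, so $h_0\cdots h_j$ is the Lemma \ref{lemma1} decomposition of an element of $C_g$. Hence $\tau$ is a face of $X_{C_g}$, and since no $h_k$ is a $p$-element, $\tau\in X_{C_g}^p$.

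Conversely, suppose $\tau=h_0\ldots h_j\in X_{C_g}^p$. Each $h_k$ lies in $C_g$, so it commutes with $g$, and it is a $q$-element for some prime $q\neq p$. Thus $g,h_0,\ldots,h_j$ are pairwise commuting elements of pairwise distinct prime power orders. The product $gh_0\cdots h_j$ therefore has this as its unique decomposition by Lemma \ref{lemma1}, so $\{g\}\cup\tau$ is a face of $X_G$. Also $g\notin\tau$, since $g$ is a $p$-element, so $\tau\in\Lk_g$. The empty face lies in both complexes trivially.

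There is no real obstacle here. The only point needing care is the clique characterisation of faces: a set of vertices is a face exactly when its elements pairwise commute and have pairwise distinct primes. One direction holds because the components in Lemma \ref{lemma1} are powers of $g$ and so commute. The other holds because a product of commuting elements of coprime prime-power orders has order equal to the product of their orders, so uniqueness in Lemma \ref{lemma1} identifies the factors. I would state this explicitly before running the two inclusions.
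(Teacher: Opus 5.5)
Your proof is correct and follows the same route as the paper. The paper's proof is the one-line observation that $\tau\in\Lk_g$ exactly when $g\tau$ is a face of $X_G$, which in turn happens exactly when $\tau$ is a face of $X_{C_g}$ with no $p$-element vertices; your double inclusion just makes explicit the clique characterisation of faces (pairwise commuting vertices of distinct primes, with uniqueness in Lemma \ref{lemma1} identifying the factors) that the paper leaves implicit.
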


\begin{proof}
The link $\Lk_g$ consists of the faces $g_0g_1\ldots g_i\in X_G$
such that $gg_0g_1\ldots g_i$ is also a face of $X_G$. But this
is precisely the subcomplex $X_{C_g}^p$.
\qed
\end{proof}

%%%%%%%%%%%%%%%%%%%%%%%%%%%%%%%%%%%%%%%%%%%%%%%%%%%%%

\subsection{The schematic $\X_G$ of $X_G$}
\label{section:clique:schematic}

One can draw pictures of clique complexes,
with the element $g=g_0g_1\ldots g_i$ appearing as
an $i$-dimensional geometric simplex whose faces
correspond to the elements $g_{i_0}g_{i_1}\ldots g_{i_\ell}$,
for $0\leq i_0<i_1<\cdots<i_\ell\leq i$.
The resulting picture of $X_G$ carries a great deal of redundancy
however. Instead, we will draw a ``picture up to conjugacy'',
or schematic.

The schematic $\X_G$ has faces in 1-1 correspondence with the 
conjugacy classes of non-identity elements of $G$. The vertex set
is the set of conjugacy classes of elements of prime power order. 
If $K$
is a conjugacy class and $g\in K$ a representative, then let
$g=g_0g_1\ldots g_i$ be the decomposition of Lemma \ref{lemma1}.
Then $\X_G$ has an $i$-dimensional geometric simplex
whose $\ell$-faces correspond to the conjugacy classes
of the elements $g_{i_0}g_{i_1}\ldots g_{i_\ell}$,
for $0\leq i_0<i_1<\cdots<i_\ell\leq i$.
In particular, the face corresponding to $K$ is
spanned by the vertices $K_0,K_1,\ldots,K_i$, where
$K_j$ is the conjugacy class of $g_j$ for $0\leq j\leq i$.

We are using the same terminology for a schematic as we would use
for a simplicial complex, but 
the schematic is not in general a simplicial complex:
an $i$-face has $i+1$ distinct vertices, but
the same set of $i+1$ vertices may span more than one
$i$-face. For example, the Mathieu group $M_{24}$ has a single conjugacy
class of elements of order $3$ and a single class of elements of order $5$,
but two conjugacy classes of elements of order $15$. The schematic 
--- see Figure \ref{figure2} --- has two edges, $15A$ and $15B$, joining
the vertices $3A$ and $5A$.

\begin{figure}[t]
\begin{tikzpicture}
\draw [white] (0,0)--(\textwidth,0);
\node at (3,4.5){\includegraphics[scale=0.35]{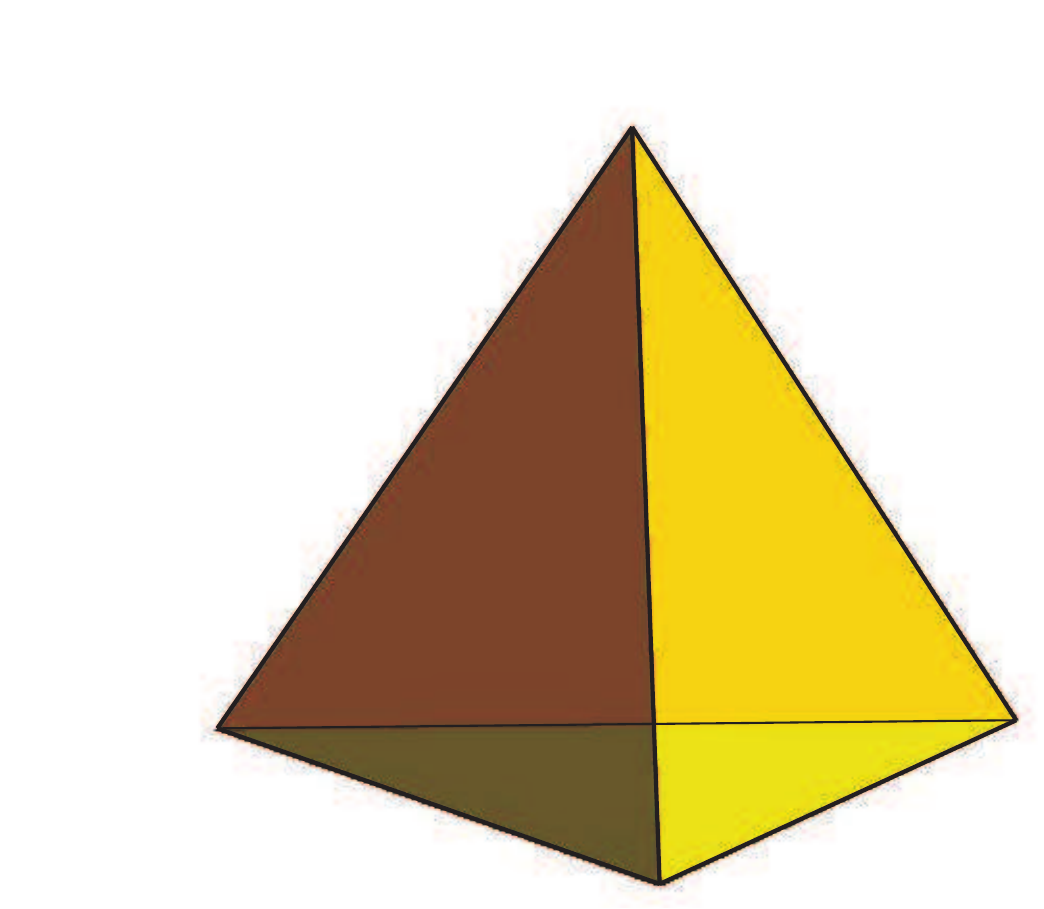}};
\node[color=red] at (3.65,6.65){$7$};
\node[color=red] at (3.8,1.75){$3^2$};
\node[color=red] at (1,2.9){$2^5$};
\node[color=red] at (6.15,2.9){$5^2$};
\node[color=blue] at (2.25,2.25){$6\cdot 2^2$};
\node[color=blue] at (5.15,2.25){$5^2\cdot 3^2$};
\node[color=blue] at (1.95,4.5){$7\cdot 2^5$};
\node[color=blue] at (5.35,4.5){$7\cdot 5^2$};
\node[color=blue] at (4.5,3.1){$5^2\cdot 2^5$};
\node[color=blue] at (4.15,4.2){$7\cdot 3^2$};
\node[color=yellow] at (2.75,3.75){$7\cdot 6\cdot 2^2$};
\node at (10,4.3){\includegraphics[scale=1]{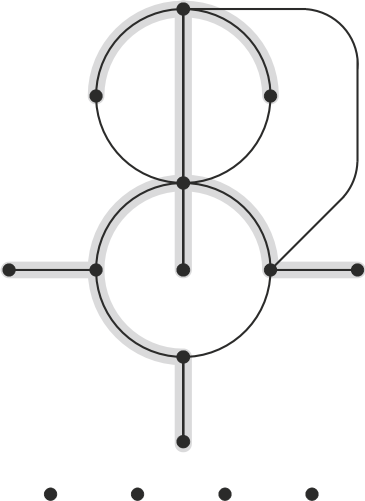}};
\node[color=red] at (12.2,0.45){$9$};
\node[color=red] at (10.7,0.45){$8$};
\node[color=red] at (7.8,0.45){$4^2$};
\node[color=red] at (9.3,0.45){$4\cdot 2^2$};
\node[color=red] at (13.2,4){$7$};
\node[color=red] at (6.75,4){$3^3$};
\node[color=red] at (10,8.7){$5$};
\node[color=red] at (8.25,6.9){$4$};
\node[color=red] at (11.8,6.9){$2^2$};
\node[color=red] at (10.2,5.7){$3$};
\node[color=red] at (11.1,4){$2$};
\node[color=red] at (8.9,4){$2^3$};
\node[color=red] at (10,0.75){$2^4$};
\node[color=red] at (10,2.8){$3^2$};
\node[color=red] at (10,3.7){$4\cdot 2$};
\node[color=blue] at (8.25,4.85){$3\cdot 2^3$};
\node[color=blue] at (11.75,4.85){$3\cdot 2$};
\node[color=blue] at (8.45,3){$6$};
\node[color=blue] at (11.65,3){$3^2\cdot 2$};
\node[color=blue] at (7.75,4.2){$6\cdot 3$};
\node[color=blue] at (12.25,4.2){$7\cdot 2$};
\node[color=blue] at (8.3,7.85){$5\cdot 4$};
\node[color=blue] at (11.75,7.85){$5\cdot 2^2$};
\node[color=blue] at (8.4,6){$4\cdot 3$};
\node[color=blue] at (11.65,6){$3\cdot 2^2$};
\node[color=blue] at (10.35,6.9){$5\cdot 3$};
\node[color=blue] at (10.5,4.6){$4\cdot 3\cdot 2$};
\node[color=blue] at (10.35,1.75){$6\cdot 2$};
\node[color=blue] at (13.35,7){$5\cdot 2$};
%
%\draw[help lines,blue,line width=.6pt,step=1] (0,0) grid
%(\textwidth,9); % blue grid boxes; comment out when done
%
\end{tikzpicture}
\caption{The $3$-face in the schematic $\X_{\symn}$ corresponding to
  the elements with cycle structure $(7,5^2,6,2^2)$ \emph{(left)}
  and the schematic $\X_{\Sym{9}}$ \emph{(right)}.}
\label{figure1}
\end{figure}

An $i$-face $K$ in the schematic $\X_G$ gives an $i$-face in the 
clique complex $X_G$ in the following way: 
write $K=K_{01\ldots i}$, so that for each subset
$\mu\subseteq\{0,1,\ldots,i\}$, there is a 
$(|\mu|-1)$-face of $K$. Write $K_\mu$ for the
corresponding conjugacy class.
Fix a face $\mu_0$ and choose
an element $g_{\mu_0}\in K_{\mu_0}$. Then for all $\mu$ there
is a $g_\mu\in K_\mu$ (with $g_\mu=g_{\mu_0}$ when $\mu=\mu_0$)
such that the $g_\mu$ are faces of an $i$-face of $X_G$.

In particular, if $K_{01}$ is an edge of $\X_G$, then for
any $g_0\in K_0$ there is a $g_1\in K_1$, with an edge
$g_0g_1\in X_G$ connecting them. This process can be iterated.
If $\goth{T}$ is a subtree of $\X_G$ with vertices the
conjugacy classes $K_1,K_2,\ldots,K_m$, then there are
elements $g_i\in K_i\,(1\leq i\leq m)$ such that $g_1,g_2,\ldots,g_m$
form the vertices of a subtree $T$ of $X_G$ that is isomorphic
to $\goth{T}$.

\vspace{1em}

Figure \ref{figure1} gives two examples in the symmetric group $\symn$,
where the conjugacy classes are parametrized by integer
partitions of $n$. 
For the first, let $g\in\symn\,(n\geq 27)$
be an element with cycle structure
$(7,5^2,6,2^2)$, omitting cycles of length $1$. Then by
Lemma \ref{lemma:symmetric:decomposition}, $g$
has a decomposition $g=g_0g_1g_2g_3$ with $g_0$ a $7$-cycle,
$g_1$ two disjoint $5$-cycles, $g_2$ two disjoint $3$-cycles
and $g_3$ five disjoint $2$-cycles (the $6$-cycle
$(a,b,c,d,e,f)$ decomposes as a product of
$(a,e,c)(d,b,f)$ and $(a,d)(b,e)(c,f)$).
The schematic for $X_{\symn}$
contains the $3$-simplex shown on the left of Figure
\ref{figure1}, where the vertices are labelled by the conjugacy classes
of elements with cycle structures
$(7),(5^2),(3^2)$ and $(2^5)$; the edges by
$(6,2^2),(5^2,3^2),(5^2,2^5),(7,2^5),\ldots\text{etc}\ldots$ 
We have abbreviated
$(5^2,3^2)$ to $5^2\cdot 3^2$, and so on. Only one of the $2$-faces
is labelled; the $2$-face on the right corresponds to the
class $7\cdot 5^2\cdot 3^2$, the back face to $7\cdot 5^2\cdot 3^2$
and the bottom to $5^2\cdot 6\cdot 2^2$.

The right of Figure \ref{figure1} shows the schematic for $\Sym{9}$,
which is the largest $1$-dimensional symmetric group
--- $\Sym{10}$ contains the $2$-face $(1,2)(3,4,5)(6,7,8,9,10)$.
The schematic is drawn using Lemmas
\ref{lemma:symmetric:decomposition}-\ref{lemma:symmetric:facets} of
Section \ref{section:clique:complex:symmetric} below.
There are four conjugacy classes of isolated vertices:
those with cycle structures $(9),(8),(4^2)$ and $(4,2^2)$.

\vspace{1em}

The connected components of the schematic can sometimes
correspond to connected components of the clique complex:

\begin{lemma}
  \label{lemma5}
  Let $\goth{C}$ be a connected component of $\X_G$ 
  with vertices $K_1,K_2,\ldots,K_j$ and let 
  $C$ be the subcomplex of $X_G$ spanned
  by the vertices in $\bigcup K_i$.
  Suppose that
  there are group elements $g_1\in K_1,g_2\in K_2,\ldots,g_j\in K_j$ such that
  $g_1,g_2,\ldots,g_k$ lie in the same connected component of $C$
  and the centralizers $C_{g_1},C_{g_2}\ldots,C_{g_j}$ generate $G$.
  Then $C$ is connected.
\end{lemma}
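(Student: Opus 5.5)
Let $C_0$ be the connected component of $C$ containing $g_1,\ldots,g_j$ (the statement's ``$g_k$'' should read $g_j$). The plan is to show that $C_0$ is invariant under all of $G$ and that every vertex of $C$ is a $G$-translate of a vertex of $C_0$; together these give $C=C_0$.

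\emph{Step 1: $C$ is $G$-invariant.} $G$ acts on $X_G$ by simplicial automorphisms via conjugation, and each $\bigcup K_i$ is a union of conjugacy classes. So conjugation preserves the vertex set of $C$, hence $C$ itself, and permutes the connected components of $C$.

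\emph{Step 2: the centralizers fix $C_0$.} Fix $i$ and $h\in C_{g_i}$. Then $g_i^h=g_i$, so the component $h\cdot C_0$ contains $g_i$. Since $g_i\in C_0$, this forces $h\cdot C_0=C_0$. Therefore the setwise stabilizer of $C_0$ in $G$ contains every $C_{g_i}$. Because the $C_{g_i}$ generate $G$, the stabilizer is all of $G$, so $C_0$ is $G$-invariant.

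\emph{Step 3: every vertex of $C$ lies in $C_0$.} Let $x$ be any vertex of $C$, so $x\in K_i$ for some $i$. Then $x=g_i^h$ for some $h\in G$, and hence $x\in h\cdot C_0=C_0$. Thus $C_0$ contains every vertex of $C$. Since $C$ is the full subcomplex on its vertices and $C_0$ is a connected component of $C$, we get $C=C_0$, so $C$ is connected.

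\emph{Where care is needed.} The only delicate point is the bookkeeping in Step 2: conjugation is a simplicial automorphism of $C$, so it maps components to components, and a component is determined by any one of its vertices. I do not expect to need the hypothesis that $\goth{C}$ is a component of $\X_G$. It only explains why $C$ is a natural object, being a union of components of $X_G$ if the classes are closed under adjacency.
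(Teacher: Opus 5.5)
Your proof is correct and takes essentially the same approach as the paper: take the component of $C$ containing the $g_i$, note that each $C_{g_i}$ stabilizes it, use generation to get $G$-invariance, then use conjugacy within each $K_i$ to show it contains every vertex of $C$. You are right that $g_k$ should read $g_j$ and that the hypothesis on $\goth{C}$ is never used, and your explicit check that $C$ is $G$-invariant (so conjugation permutes its components) spells out a step the paper leaves implicit.
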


\begin{figure}[t]
\begin{tikzpicture}
\draw [white] (0,0)--(\textwidth,0); 
\begin{scope}[xshift=0mm,yshift=-3mm]
\node at (7,4.5){\includegraphics[scale=1]{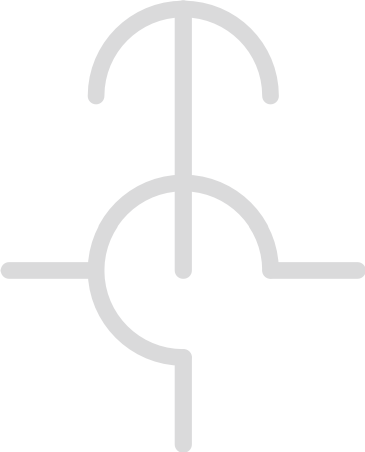}};
\node %[fill=white,inner sep=4pt] 
      at (7,8.15){$\scriptstyle{(1,2,3,4,5)}$};
\node %[fill=white,inner sep=4pt] 
      at (5.5,6.65){$\scriptstyle{(6,7,8,9)}$};
\node %[fill=white,inner sep=4pt] 
      at (8.5,6.65){$\scriptstyle{(6,7)(8,9)}$};
\node %[fill=white,inner sep=4pt] 
      at (7,5.2){$\scriptstyle{(7,8,9)}$};
\node %[fill=white,inner sep=4pt] 
      at (5.5,3.75){$\scriptstyle{(1,2)(3,4)(5,6)}$};
\node %[fill=white,inner sep=4pt] 
      at (3.4,3.75){$\scriptstyle{(1,5,3)(2,6,4)(7,8,9)}$};
\node %[fill=white,inner sep=4pt] 
      at (7,3.75){$\scriptstyle{(1,2,3,4)(5,6)}$};
\node %[fill=white,inner sep=4pt] 
      at (8.5,3.75){$\scriptstyle{(1,2)}$};
\node %[fill=white,inner sep=4pt] 
      at (10.1,3.75){$\scriptstyle{(3,4,5,6,7,8,9)}$};
\node %[fill=white,inner sep=4pt] 
      at (7,2.3){$\scriptstyle{(1,5,3)(2,6,4)}$};
\node %[fill=white,inner sep=4pt] 
      at (7,0.8){$\scriptstyle{(1,4)(2,5)(3,6)(7,8)}$};
\end{scope}
%
%\draw[help lines,blue,line width=.6pt,step=1] (0,0) grid
%(\textwidth,9); % blue grid boxes; comment out when done
%
\end{tikzpicture}
\caption{A subtree $T$ of $X_{\Sym{9}}$ given by the spanning
tree $\goth{T}$ of $\X_{\Sym{9}}$ in Figure \ref{figure1}.}
\label{figure1a}
\end{figure}

Connected components of the schematic do not give connected components
of the clique complex without the extra conditions given in the lemma:
for example, if $\goth{C}$ is an isolated vertex of $\X_G$ then 
the subcomplex $C$ of the lemma is a conjugacy class of isolated 
vertices in $X_G$.

\begin{proof}
Let $C^\circ$ be the connected component of $C$ that contains 
$g_1,g_2,\ldots,g_k$. In the action of $G$ on $X_G$ by conjugation,
the centralizer $C_{g_i}$ fixes $g_i$ for each $i$, 
hence each $C_{g_i}$ sends $C^\circ$ to itself. By the generating property,
$G$ then sends $C^\circ$ to itself. If $g\in \bigcup K_i$ then
there is an element of $G$ sending one of the $g_i$ to $g$,
and so $g\in C^\circ$ and hence $C^\circ=C$.
\qed
\end{proof}

We can use Lemma \ref{lemma5} to show that $\Sym{9}$ is mono-connected.
Let $\goth{C}$ be the single large connected component 
of $\X_{\Sym{9}}$ and let $\goth{T}$ be the 
shaded spanning tree for this component shown in Figure \ref{figure1}.
Then there are elements of $\Sym{10}$ that form the
vertices of a subtree $T$ of the clique complex $X_{\Sym{9}}$
that is isomorphic to $\goth{T}$ --- see Figure \ref{figure1a}.
Let these elements be the $g_i$ of Lemma \ref{lemma5}. Clearly
$(1,2),(2,3),\ldots,(5,6)$ lie in the centralizer of 
$(7,8,9)$, while $(6,7),(7,8),(8,9)$ are in the centralizer
of $(1,2,3,4,5)$. Thus, these two centralizers generate
$\Sym{9}$ and the subcomplex $C$ of Lemma \ref{lemma5} is
connected. The conclusion is that $\Sym{9}$ is mono-connected
with the elements of cycle structures $4\cdot 2^2,4^2,8$ and $9$
being isolated vertices, and all the other elements lying in
a single connected component. We show below, in a different way,
that $\Sym{n}\,(n\not=5,6,7)$ is 
mono-connected. 

%%%%%%%%%%%%%%%%%%%%%%%%%%%%%%%%%%%%%%%%%%%%%%%%%%%%%

\subsection{A hierarchy of invariants}
\label{section:clique:invariants}

The clique complex lies at the apex of a pyramid of invariants
for finite groups. Invariants further up the pyramid determine
those lower down, and the higher up one goes, the more
discerning the invariants become. By invariant, we mean an object
$\Omega_G$ --- a set or a graph or a complex --- attached to $G$,
and such that a group isomorphism $G\cong H$ induces an
isomorphism $\Omega_G\cong\Omega_H$. If $\Omega_G$ is a set of 
integers, then an isomorphism $f:\Omega_G\ra\Omega_H$ is a bijection
such that $n$ and $f(n)$ are equal.

At the base is the \emph{prime spectrum\/} $\pi_G$, consisting
of the set of prime divisors of the order of $G$ (or equivalently, 
the set of prime orders of elements of $G$). The \emph{spectrum\/}
$\omega_G$ is the set of all possible element orders of $G$. 

The \emph{prime\/} (or \emph{Gruenberg-Kegel\/}) graph $\Gamma_G$
is the simplicial graph, or $1$-dimensional simplicial complex,
whose vertices are the set $\pi_G$ and with vertices $p\not= q$
joined by an edge exactly when $pq\in\omega_G$. 
As with the clique complex, the graph $\Gamma_G$ is
an abstract graph $\Gamma_G^\circ$ whose vertices correspond to
(or can be labelled by) distinct primes. 
An isomorphism $f:\Gamma_G\ra\Gamma_H$ is a graph
isomorphism $\Gamma_G^\circ\ra\Gamma_H^\circ$, where for 
the vertex $p\in\pi_G$
the image vertex $f(p)$ is also labelled by $p$.
The graph $\Gamma_G$ was introduced by Gruenberg and Kegel,
motivated by the results of \cite{MR0374247}; its connectedness
was first systematically studied in \cite{MR0617092}.

The prime spectrum $\pi_G$ is determined by 
the Gruenberg-Kegel graph $\Gamma_G$
--- just read the primes off the vertices --- and $\Gamma_G$
is in turn determined by the spectrum $\omega_G$. 
The abstract graph $\Gamma_G^\circ$ is obviously determined by
the labelled graph $\Gamma_G$. 

The spectrum $\omega_G$ is 
determined by the clique complex $X_G$: we have $m\in\omega_G$
exactly when $m=p_0^{m_0}\ldots p_i^{m_i}$, and there
is an $i$-face in $X_G$ spanned by vertices 
labelled by elements of orders $p_0^{m_0},\ldots, p_i^{m_i}$.
The element of order $m$ is then the product of the elements
labelling the vertices.
The Gruenberg-Kegel graph
$\Gamma_G$ is thus also determined by $X_G$; in fact:

\begin{lemma}
  \label{lemma3}
  There is a surjective map of simplicial graphs
  $X_G^{(1)}\ra \Gamma_G$.
  %where $X_G^{(1)}$ is the $1$-skeleton of
  %the clique complex. 
  %that sends the vertex $g\in X_G$
  %to the vertex $p\in\Gamma_G$,
  %and the edge $g_0g_1\in X_G$ to the edge in $\Gamma_G$ 
  %connecting $p_0$ to $p_1$.
\end{lemma}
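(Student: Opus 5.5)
The map is the obvious one: send a vertex $g$ of $X_G$, a non-trivial $p$-element, to the prime $p\in\pi_G$. It is well defined because a non-trivial element of prime power order has exactly one prime divisor of its order. The work is then to check that this vertex map is a simplicial map of graphs and that it is surjective on both vertices and edges.

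For the simplicial property, let $g_0g_1$ be an edge of $X_G^{(1)}$. By the definition of $X_G$ and Lemma \ref{lemma1}, $g_0$ is a $p$-element and $g_1$ a $q$-element with $p\neq q$, and they commute. If $g_0$ has order $p^a$ and $g_1$ order $q^b$, then their product $g_0g_1$ has order $p^aq^b$. Hence $(g_0g_1)^{p^{a-1}q^{b-1}}$ has order $pq$, so $pq\in\omega_G$. Thus $p$ and $q$ are joined in $\Gamma_G$, and the edge $g_0g_1$ maps to the edge $pq$. Since $p\neq q$, no edge is collapsed to a vertex.

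For surjectivity on vertices, each $p\in\pi_G$ divides $|G|$, so Cauchy's theorem gives an element of order $p$; this is a vertex of $X_G$ mapping to $p$. For surjectivity on edges, suppose $p$ and $q$ are joined in $\Gamma_G$. Then there is some $g$ of order $pq$, and Lemma \ref{lemma1} writes $g=g_0g_1$ with $g_0$ of order $p$ and $g_1$ of order $q$ commuting. So $g_0g_1$ is an edge of $X_G$ lying over $pq$.

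Nothing here is a genuine obstacle. The only point needing care is that the order of a product of commuting elements of coprime orders is the product of their orders, and this is standard.
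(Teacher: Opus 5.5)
Your proof is correct and essentially the paper's argument: the same vertex map $g\mapsto p$, an edge $g_0g_1$ yields an element of order $pq$ (the paper takes prime-order powers of $g_0,g_1$, you take a power of the product), and edge-surjectivity comes from splitting an element of order $pq$ into commuting $p$- and $q$-parts. You also verify surjectivity on vertices via Cauchy's theorem, which the paper leaves implicit.
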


\begin{proof}
If $g_0g_1$ is an edge
of $X_G$ then the $g_i$ are commuting $p_i$-elements with $p_0\not=p_1$.
There are thus powers of $g_0,g_1$ that are elements of orders
$p_0,p_1$, hence there is an edge of $\Gamma_G$ connecting the
vertices $p_0,p_1$. The map $f:X_G^{(1)}\ra \Gamma_G$
given by $f(g)=p$, for a $p$-element $g$,
and $f(g_0g_1)=$ the edge connecting $p_0$ to $p_1$,
is then a map of graphs.
If there is an edge $e$ connecting
$p_0,p_1$ in $\Gamma_G$ then there are commuting elements $g_0,g_1$ in $G$
of orders $p_0$ and $p_1$, and thus an edge $g_0g_1$ in $X_G$
with $f(g_0g_1)=e$. 
\qed
\end{proof}

\begin{figure}
\begin{tikzpicture}
\draw [white] (0,0)--(\textwidth,4); % white line to determine figure
% height; leave
%
\begin{scope}[xshift=22.5mm,yshift=0mm]
\fill[fill=black!20, rounded corners=6pt] (4.25,0.5) rectangle (9.5,3.5);
\node[font=\normalsize] at(1,1){$G$};
\node[font=\normalsize] at(3,1){$X_G$};
\node[font=\normalsize] at(3,3){$X^\circ_G$};
\node[font=\normalsize] at(5,1){$\omega_G$};
\node[font=\normalsize] at(7,1){$\Gamma_G$};
\node[font=\normalsize] at(7,3){$\Gamma^\circ_G$};
\node[font=\normalsize] at(9,1){$\pi_G$};
\draw[line width=.3mm,-Straight Barb](1.4,1)--(2.6,1);
\draw[line width=.3mm,-Straight Barb](3,1.4)--(3,2.6);
\draw[line width=.3mm,-Straight Barb](3.4,1)--(4.6,1);
\draw[line width=.3mm,-Straight Barb](5.4,1)--(6.6,1);
\draw[line width=.3mm,-Straight Barb](7,1.4)--(7,2.6);
\draw[line width=.3mm,-Straight Barb](7.4,1)--(8.6,1);
\node[color=red] at(2,1.2){$1$};
\node[color=red] at(2.8,2){$2$};
\node[color=red] at(4,1.2){$3$};
\node[color=red] at(6,1.2){$4$};
\node[color=red] at(6.8,2){$5$};
\node[color=red] at(8,1.2){$5$};
\end{scope} 
%
%\draw[help lines,blue,line width=.6pt,step=1] (0,0) grid
%(\textwidth,4); % blue grid boxes; comment out when done
%
\end{tikzpicture}
\caption{A hierarchy of invariants for a finite group. There
are groups that are unrecognizable by the invariant
if and only if the invariant lies in the shaded area.}
\label{figureqe9t}
\end{figure}

This hierarchy of invariants is summarised in 
Figure \ref{figureqe9t}, with an arrow going from a stronger invariant
to a weaker one. 
The ultimate invariant is of course the group $G$ itself:
very discerning, but not very practical. All the other invariants in the figure
are incomplete, i.e., for each one, there are groups $G\not\cong H$
for which the invariant has the same value. 

In fact, it is possible to show that for each arrow, the invariant at the
starting end is strictly stronger than the invariant at the
finishing end. For example, the alternating group $A_{10}$ and 
the automorphism group $\text{Aut}(J_2)$ of Janko's second sporadic
group both have isomorphic underlying graph $\Gamma_G^\circ$
and the same prime spectrum $\pi_G$, but there is no isomorphism
between the Gruenberg-Kegel graphs: the $p$-labels on the
vertices are not compatible with any graph isomorphism.
(This is label $5$ on the edges of Figure \ref{figureqe9t}.)
The alternating
groups $A_5,A_6$ have isomorphic Gruenberg-Kegel graphs but
different spectra (label $4$). The symmetric groups $\Sym{5},\Sym{6}$
have the same spectra, but different clique complexes $X_G$
and $X_G^\circ$ (label $3$), because the order $|G|$ is determined
by $X_G^\circ$. See \cite{MR4506711}, and the references there, for all these
observations.

The clique complex of $G=\Z_2\times\Z_2$ consists of three isolated vertices,
all labelled by elements of order $2$; for the group $H=\Z_4$ we also have
three isolated vertices, with two labelled by elements of order $4$ and
one by an element of order $2$. Thus, $X_G^\circ\cong X_H^\circ$
but $X_G\not\cong X_H$, giving the label $2$.

Finally, for the clique complex itself we have:

\begin{example}
$\Z_4 = \{0,1,2,3\}$ has an automorphism of order $2$ swapping $1$ and 
$3$, so we can form a semidirect product $G = \Z_4\ltimes\Z_4$, where one copy of 
$\Z_4$ acts on the other via the natural surjective homomorphism 
$\Z_4\ra \Z_2=\text{Aut}(\Z_4)$.
Then $G$ has three elements of order $2$ and twelve elements of order $4$.
Let $H = Q_8\times \Z_2$ be the direct product of the quaternion group of order 
$8$ and the cyclic group of order $2$.
Then $H$ also has three elements of order $2$ and twelve elements of order $4$.

The clique complexes $X_G$ and $X_H$ both consist of $15$ isolated vertices,
with three labelled by elements of order $2$ and the remaining $12$ by
elements of order $4$. Thus $X_G\cong X_H$ but $G\not\cong H$,
and this gives label $1$ in Figure \ref{figureqe9t}.
This is not a low-dimensional phenomenon:
for $d\geq 1$, let $K_d=\Z_3\times\Z_5\times\cdots\times\Z_{p_d}$,
where $p_d$ is the $d$-th odd prime. Then
$X_{G\times K_d}\cong X_G*X_{K_d}\cong X_H*X_{K_d}
\cong X_{H\times K_d}$, where $G\times K_d$ and $H\times K_d$
are non-isomorphic groups of dimension $d$.
\end{example}

Let $\Omega_G$ be one of the invariants in Figure \ref{figureqe9t}.
In \cite{MR4506711} a group $G$ is said to be recognizable 
(or \emph{seen\/}) by $\Omega_G$, if for any $H$, an isomorphism
$\Omega_G\cong\Omega_H$ implies that $G\cong H$. Thus, $G$ is determined
up to isomorphism by $\Omega_G$. The group $G$ is \emph{almost
recognizable\/} by $\Omega_G$ if there are only finitely many 
non-isomorphic $H$ with $\Omega_G\cong\Omega_H$, and
\emph{unrecognizable\/} when there are infinitely many
non-isomorphic $H$ with $\Omega_G\cong\Omega_H$.

There is an extensive literature around these terms, for which
we refer the interested reader to \cite{MR4506711}, and
also for the following remarks.
Of immediate interest to us is the following result:
if $G$ contains a non-trivial
soluble normal subgroup, then $G$ is unrecognizable
by $\omega_G$, and hence by $\Gamma_G$ and $\pi_G$ too.
On the other hand:

\begin{proposition}
A finite group $G$ is almost recognizable by $X_G^\circ$,
and hence by $X_G$ too.
\end{proposition}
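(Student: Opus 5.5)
The key observation is that the abstract complex $X_G^\circ$ already pins down the order of $G$. Once the order is known, only finitely many isomorphism types remain.

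Since the non-empty faces of $X_G$ are in bijection with the non-trivial elements of $G$ (Lemma \ref{lemma1} and the definition of the clique complex), we get
$$|G| = 1 + \#\{\text{non-empty faces of } X_G^\circ\}.$$
The right-hand side depends only on the abstract simplicial complex $X_G^\circ$. Now suppose $H$ is any finite group with an isomorphism of abstract simplicial complexes $X_H^\circ\cong X_G^\circ$. Such an isomorphism is a bijection on faces, so it preserves the number of non-empty faces, and therefore $|H|=|G|$.

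There are only finitely many isomorphism classes of groups of a given order $n$. One way to see this: by Cayley's theorem every such group embeds in $\Sym{n}$, which has finitely many subgroups. More crudely, there are at most $n^{n^2}$ multiplication tables on an $n$-element set. Hence only finitely many non-isomorphic $H$ satisfy $X_H^\circ\cong X_G^\circ$, which is exactly almost recognizability by $X_G^\circ$.

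For the clause ``and hence by $X_G$ too'', note that an isomorphism $X_G\cong X_H$ of clique complexes is by definition an isomorphism $X_G^\circ\cong X_H^\circ$ with an extra condition on orders. So every $H$ with $X_H\cong X_G$ lies in the finite set already found. The argument is elementary, and I expect no real obstacle. The only point needing care is that the face count used must be basepoint-free, that is, it must count non-empty faces and exclude the empty face, so that it is genuinely an invariant of the abstract complex.
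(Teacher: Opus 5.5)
Your proposal is correct and matches the paper's argument: $X_G^\circ$ determines $|G|$ as one plus the number of non-empty faces, via the face--element bijection, and there are only finitely many groups of a given order. You also spell out two details the paper leaves implicit, namely why finitely many groups have a given order and why the statement passes from $X_G^\circ$ to $X_G$.
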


This is for the simple reason that $X_G^\circ$ determines
the order $|G|$ of $G$, and there are only finitely many isomorphism
classes of groups with a given order.
Updating Figure \ref{figureqe9t}, there are $G$ that are
unrecognizable by the invariant $\Omega_G$
if and only if $\Omega_G$ lies in the shaded area.

The situation becomes more interesting for the finite simple
groups: almost all finite simple groups are almost recognizable
by $\omega_G$. By this we mean that there is a finite set $S$
of simple groups, such that if $G$ is finite simple, and $G\not\in S$,
then $G$ is almost recognizable by $\omega_G$. On the other hand:

\begin{theorem}
\label{theorem:simple:groups}
A finite simple group $G$ is seen by the clique complex $X_G$.
\end{theorem}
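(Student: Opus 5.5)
The plan is to show that $X_G$ determines two classical invariants, the order $|G|$ and the spectrum $\omega_G$, and then to invoke the known theorem that a finite simple group is characterised by its order together with its set of element orders. That characterisation is the ``$\omega_G$ plus $|G|$'' recognition result of Vasil'ev--Grechkoseeva--Mazurov, completing Shi's conjecture; see \cite{MR4506711} and its references. Note that for simple $G$ we are given only an isomorphism $X_G\cong X_H$ with $H$ an \emph{arbitrary} finite group. So the argument must extract from $X_H$ exactly the data that the recognition theorem uses, with no assumption on $H$.

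First, $|G|$ is determined by $X_G^\circ$. Non-empty faces of $X_G$ are in bijection with the non-trivial elements of $G$ (Lemma \ref{lemma1}), so $|G|=1+\#\{\text{non-empty faces of }X_G\}$. An isomorphism $X_G\to X_H$ is in particular a simplicial isomorphism $X_G^\circ\to X_H^\circ$, so it preserves the number of faces and $|G|=|H|$.

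Second, $\omega_G=\omega_H$. As observed before Lemma \ref{lemma3}, $m=p_0^{m_0}\cdots p_i^{m_i}\in\omega_G$ exactly when $X_G$ has an $i$-face whose vertices are labelled by elements of orders $p_0^{m_0},\ldots,p_i^{m_i}$. An isomorphism of clique complexes preserves faces and preserves the orders labelling vertices. Hence the same faces-with-labels occur in $X_H$, so $\omega_G=\omega_H$. (In fact the bijection $G\to H$ induced by $f$ preserves element orders, which is even stronger: the number of elements of each order agrees.)

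Finally, apply the theorem that if $G$ is finite simple and $H$ is any finite group with $|H|=|G|$ and $\omega_H=\omega_G$, then $H\cong G$. The only substantive step is this last one, and the proof simply cites it: it is a deep result resting on the classification of finite simple groups. If one wished to avoid relying on the full Shi-conjecture theorem, a fallback would be Thompson-type characterisations by the ``order type'' (the number of elements of each order), which $X_G$ also determines. Either way, the main obstacle is external to the paper, and within the paper the argument is just the two bookkeeping observations above.
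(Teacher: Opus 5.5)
Your proposal is correct and is essentially the paper's argument: an isomorphism $X_G\cong X_H$ forces $|G|=|H|$ (by counting faces) and $\omega_G=\omega_H$ (by reading orders off labelled faces), and the Vasil'ev--Grechkoseeva--Mazurov characterisation of finite simple groups by spectrum and order, which the paper cites as \cite{MR2640961}, then gives $G\cong H$. Your suggested fallback via order types is not an independent route, since no characterisation of all finite simple groups by the number of elements of each order is known that avoids this theorem, but the argument does not need it.
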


The result follows from the following one: if $G$ is finite simple
and $H$ is a finite group, then $G\cong H$ if and only if 
$\omega_G=\omega_H$ and $|G|=|H|$; see \cite{MR2640961} and the 
references there. 
The
latter two conditions are implied by $X_G\cong X_H$.

%%%%%%%%%%%%%%%%%%%%%%%%%%%%%%%%%%%%%%%%%%%%%%%%%%%%%
%%%%%%%%%%%%%%%%%%%%%%%%%%%%%%%%%%%%%%%%%%%%%%%%%%%%%

\section{$X_G$ for certain families of groups}

We describe the clique complex $X_G$ for certain $G$, with
a particular focus on dimensions and mono-connectedness.
The Abelian groups are the easiest, but as they can be dealt
with uniformly among the nilpotent groups, we postpone
them to Section \ref{section:clique:complex:nilpotent}.

%%%%%%%%%%%%%%%%%%%%%%%%%%%%%%%%%%%%%%%%%%%%%%%%%%%%%

\subsection{The symmetric groups}
\label{section:clique:complex:symmetric}

It is possible to describe the facets and isolated vertices in the
clique complex $X_{\Sym{n}}$.

\begin{lemma}
  \label{lemma:symmetric:decomposition}
  The decomposition of Lemma \ref{lemma1} for $g\in\symn$ is obtained
  as follows:
  \begin{enumerate}[(i).]
    \item Decompose $g\in\symn$ into a product of disjoint
  cycles.
\item For each $k$-cycle in this decomposition,
  with $k=q_0q_1\ldots q_\ell$
  where $q_i=p_i^{m_i}$ for $p_i$ prime,
  we have the decomposition $g_0g_1\ldots g_\ell$,
  where $g_i$ is a product of
  $q_0q_1\ldots\hat{q}_i\ldots q_\ell$
  cycles of length $q_i$, with $\hat{q}_i=1$.
\end{enumerate}
\end{lemma}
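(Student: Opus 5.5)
The plan is to reduce to a single cycle, and then appeal to the uniqueness part of Lemma \ref{lemma1}. If $g=c_1c_2\cdots c_r$ is a product of disjoint cycles, suppose each $c_j$ has been decomposed as $c_j=c_{j,0}c_{j,1}\cdots$ into commuting prime-power parts, with each $c_{j,i}$ supported on the support of $c_j$. Then for each prime $p$ we set $g_p=\prod_j c_{j,p}$, the product of all the $p$-parts; factors with disjoint supports commute. Each $g_p$ is a $p$-element, the $g_p$ for different primes commute, and $\prod_p g_p=g$. By the uniqueness in Lemma \ref{lemma1}, this is therefore the decomposition of $g$. One bookkeeping point: cycles of different lengths may involve different sets of primes. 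So the face $g_0g_1\ldots g_i$ of $g$ has one vertex for each prime dividing some cycle length, and that vertex collects the $p$-parts of all cycles. This is exactly what part (ii), read cycle by cycle, describes.

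So it remains to treat a single $k$-cycle $c$, with $k=q_0q_1\cdots q_\ell$ and $q_i=p_i^{m_i}$. Let $m_i'=k/q_i$. Lemma \ref{lemma1} gives $g_i=c^{e_i}$, where $e_i$ satisfies $e_i\equiv 1 \pmod{q_i}$ and $e_i\equiv 0 \pmod{m_i'}$ (CRT). Then $\gcd(e_i,k)=m_i'$. We use the standard fact that a power $c^e$ of a $k$-cycle is a product of $\gcd(e,k)$ disjoint cycles, each of length $k/\gcd(e,k)$. This gives that $g_i$ is a product of $m_i'=q_0\cdots\hat q_i\cdots q_\ell$ disjoint cycles of length $q_i$, which is the claim.

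To justify that fact, label the support as $\Z/k$ with $c\colon x\mapsto x+1$. Then $c^e\colon x\mapsto x+e$, and its orbits are the cosets of the subgroup $\langle e\rangle=\langle \gcd(e,k)\rangle$. That subgroup has order $k/\gcd(e,k)$, and there are $\gcd(e,k)$ cosets.

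There is no real obstacle here. The only care needed is to make the "collect by prime" step precise, so that uniqueness applies: each $g_p$ must be shown to be a $p$-element, which holds because it is a product of commuting $p$-elements. In the example in the text, the $6$-cycle gives $c^4$ (two $3$-cycles) and $c^3$ (three transpositions), which matches.
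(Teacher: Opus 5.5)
The paper gives no proof of this lemma. It states the lemma and points to the worked example of the $6$-cycle, $(a,b,c,d,e,f)=(a,e,c)(d,b,f)\cdot(a,d)(b,e)(c,f)$, leaving the general argument to the reader. Your proof is correct and complete, and it supplies exactly what is missing.

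The two steps are the right ones. First, you assemble the $p$-parts of the individual cycles into a single $p$-element $g_p$ and invoke the uniqueness in Lemma~\ref{lemma1}. This needs only that the $g_p$ are pairwise commuting $p$-elements with product $g$, which you check. Second, you handle a single cycle with the CRT exponent $e_i$, for which $\gcd(e_i,k)=k/q_i$, together with the orbit count for $c^{e}$ on $\Z/k$.

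Your ``collect by prime'' reading of part (ii) is also the one the paper intends, as its own example shows. For cycle type $(7,5^2,6,2^2)$ the vertex $g_3$ consists of five transpositions: two from the $2^2$ part and three from the $6$-cycle.

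One small streamlining. Instead of asserting that the construction in Lemma~\ref{lemma1} produces $c^{e_i}$, note that $\sum_i e_i\equiv 1\pmod k$. Hence $\prod_i c^{e_i}=c$, and uniqueness does the rest.

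Equivalently, you could skip the reduction to one cycle entirely. Lemma~\ref{lemma1} writes each $g_p$ as a power $g^{e_p}$, and a power of $g$ acts on each cycle as the same power of that cycle. Moreover, $e_p\equiv 1$ modulo the $p$-part of $|g|$ and $e_p\equiv 0$ modulo the rest, so it has the required gcd with every cycle length simultaneously.
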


We have done an example of this process already, on the
left hand side of Figure \ref{figure1}, where $g$ has 
cycle structure $(7,5^2,6,2^2)$ and the cycles with length
a prime power appear as themselves in the decomposition;
the $6$-cycle decomposes as a product of two $3$-cycles and
three $2$-cycles.

An element $g\in \symn$ has cycle structure
$(\mu_1,\mu_2,\ldots,\mu_t)
=(\lambda_1^{a_1},\ldots,\lambda_s^{a_s})\partition n$, a partition
of the integer $n$, where the $\lambda_i$ come about by collecting
together the $\mu_j$ that have the same size.

\begin{lemma}
\label{lemma:symmetric:facets}
  \begin{enumerate}[(i).]
  \item $\dim\Sym{n}+1$ is the largest $k$ for which the
    sum $2+3+\cdots+p_k$ of the first $k$ primes is less than or equal
    to $n$. In particular,
    $\dim\Sym{n}$ tends to infinity as
    $n$ tends to infinity.
\item Let $g\in \symn$ have cycle structure
  $(\lambda_1^{a_1},\ldots,\lambda_s^{a_s})$. Then
  $g$ is a vertex if and only if there is
  a prime $p$ such that each $\lambda_j$ is a power of $p$.
\item $g$ is a facet if and only if whenever a prime $p$
is $\leq$ one of the $a_i$, then $p$ also divides one of the 
$\lambda_j$.
\item $g$ is an isolated vertex if and only if there is
  a prime $p$ such that each $\lambda_j$ is a power of $p$, and
  $p$ is the only prime $\leq a_i$ for all $i$.
  \end{enumerate}
\end{lemma}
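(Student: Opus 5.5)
The plan is to reduce everything to two facts: the order of $g$ is $\mathrm{lcm}(\lambda_1,\ldots,\lambda_s)$, and the centralizer $C_g$ has a standard structure. Then I would apply Lemma \ref{lemma2}. Throughout, I would include the fixed points of $g$ in the cycle structure as a part $\lambda_j=1$ with multiplicity $a_j$ equal to the number of fixed points. These do not affect the order of $g$, but they do contribute to $C_g$.

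For the centralizer, I would use the standard description
$$
C_g\cong\prod_{j=1}^s \Z_{\lambda_j}\wr \Sym{a_j},
\qquad |C_g|=\prod_{j=1}^s \lambda_j^{a_j}\,a_j!,
$$
which comes from permuting the $a_j$ cycles of length $\lambda_j$ among themselves and rotating each cycle. It follows that a prime $p$ divides $|C_g|$ exactly when $p$ divides some $\lambda_j$ or $p\leq a_j$ for some $j$.

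Part (ii) is then immediate from Lemma \ref{lemma2}(i): $g$ is a vertex iff $\omega(g)=1$ iff $\mathrm{lcm}(\lambda_j)$ is a prime power iff every $\lambda_j$ is a power of a single prime $p$. For part (iii) I would apply Lemma \ref{lemma2}(ii). The primes dividing some $\lambda_j$ automatically divide $|g|$, so the facet condition reduces to the following: every prime $p$ with $p\leq a_i$ for some $i$ divides some $\lambda_j$. Part (iv) combines (ii) and (iii), or equivalently follows from Lemma \ref{lemma2}(iii). If all $\lambda_j$ are powers of $p$, then the only prime dividing some $\lambda_j$ is $p$ (when $g\neq 1$), so the facet condition becomes: no prime other than $p$ is $\leq a_i$ for any $i$.

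Part (i) is the only step needing an extremal argument, and I expect it to be the main obstacle. By Lemma \ref{lemma2}(i), $\dim\Sym{n}+1=\max_g\omega(g)$. I would prove two directions.

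\emph{Existence.} If $2+3+\cdots+p_k\leq n$, take a product of disjoint cycles of lengths $2,3,\ldots,p_k$. This element has $\omega=k$.

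\emph{Bound.} Suppose $\omega(g)=k$, so $k$ distinct primes $r_1,\ldots,r_k$ each divide some cycle length. Assign each $r_i$ to one cycle whose length it divides. A cycle of length $\ell$ that is assigned primes $r_{i_1},\ldots,r_{i_m}$ has $\ell\geq r_{i_1}\cdots r_{i_m}\geq r_{i_1}+\cdots+r_{i_m}$, using that a product of integers $\geq 2$ dominates their sum. Summing over the cycles gives $n\geq r_1+\cdots+r_k\geq p_1+\cdots+p_k$, since any $k$ distinct primes sum to at least the sum of the first $k$ primes.

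Unboundedness of the dimension then follows from the infinitude of primes.
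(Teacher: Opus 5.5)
Your proposal is correct and follows the paper's proof. For (ii)--(iv), both use the centralizer $C_g\cong\prod_j \Z_{\lambda_j}\wr\Sym{a_j}$ together with Lemma~\ref{lemma2}, with fixed points counted as parts of size $1$ exactly as in the paper's partition convention. The one difference is in (i): the paper only asserts that the element of shape $(p_k,\ldots,3,2,1^m)$ maximises $\omega(g)$, while your assignment argument proves the upper bound. That argument uses two facts: the product of distinct primes dividing a cycle length bounds their sum, and any $k$ distinct primes sum to at least $p_1+\cdots+p_k$.
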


\begin{proof}
An element with cycle structure
$(p_k,\ldots,3,2,1^m)\partition n$ has order with the largest possible number of distinct
prime divisors amongst all elements of $\Sym{n}$.
%\hl{MB: I changed the preceding sentence, which previously seemed to be claiming some sort of uniqueness. But, e.g., for $n=9$ we have elements of cycle structure $(2,3)$, $(2,5)$ and $(3,5)$ all of same dimension.}
Lemma \ref{lemma2}(i) then
establishes (i). Also by
Lemma \ref{lemma2}(i)
the element $g\in \symn$
has dimension $\omega(g)-1$, where $\omega(g)$ is
the number of distinct primes dividing the lowest common
multiple of the $\{\lambda_1,\ldots,\lambda_s\}$. Part (ii) follows. 
If $g$ has cycle structure
$(\lambda_1^{a_1},\ldots,\lambda_s^{a_s})$ then $g$
has centralizer the product of wreath products:
$$
C_g\cong
\prod_i^s \Z_{\lambda_i} \wr \Sym{a_i},
$$
with $|C_g|=\prod \lambda_i^{a_i}a_i!$.
For a prime $p$ that divides this order to also divide the order
of $g$ we must therefore have the condition stated in part (iii),
and the result follows by Lemma \ref{lemma2}(ii).
Part (iv) follows from Lemma \ref{lemma2}(iii).
\qed
\end{proof}

It follows from the previous result that, in particular, $g\in \Sym{n}$ is an isolated vertex precisely when $g$
has cycle structure
  $(\lambda_1^{a_1},\ldots,\lambda_s^{a_s})$ and:
  \begin{enumerate}[({S}1).]
\item the $\lambda_i$ are distinct powers of $2$ with the
     $a_i=1$ or $2$; or,
\item the $\lambda_i$ are distinct powers of an odd prime $p\leq n$ with
     the $a_i=1$.
\end{enumerate}
For example, writing $n$ to base $2$, i.e. in the form
$n=n_s\cdot 2^s+\cdots +n_1\cdot 2+n_0$, where the $n_i=0,1$,
then a $g$ with cycle structure
$(\lambda_1,\ldots,\lambda_s)$ where the $\lambda_i$ are those $2^i$
for which $n_i=1$, will be an isolated vertex in $\symn$ for all
$n\geq 2$. The symmetric groups thus always contain isolated vertices.
On the other hand:

\begin{proposition}
  \label{proposition1}
   \begin{enumerate}[(i).]
\item $\symn$ is pure if and only if $n\leq 4$.
\item $\symn$ is mono-connected if and only if $n\not=5,6,7$.
%\item $\symn$ is connected if and only if \hl{$n\geq 15$ is odd and for
%  every odd prime $p\leq n$, the base $p$ expansion of $n$ 
%  does not have the form $(n_sn_{s-1}\ldots n_10)_p$, with the $n_i=0$ or $1$.}
  \end{enumerate}
\end{proposition}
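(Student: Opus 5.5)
We treat the two parts separately. Both rest on the criteria of Lemmas~\ref{lemma2} and \ref{lemma:symmetric:facets}, on the explicit centralizers $C_g\cong\prod_i\Z_{\lambda_i}\wr\Sym{a_i}$, and on the always-present isolated vertex coming from the base-$2$ expansion of $n$.

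For (i), the case $n\leq 4$ is a direct check. In these groups no element order has two distinct prime divisors, so by Lemma~\ref{lemma2}(i) $X_{\symn}$ is $0$-dimensional, and hence trivially pure (for $n=1$ the complex is empty). For $n\geq 5$, the element $(1,2)(3,4,5)$ gives an edge $(1,2)\,(3,4,5)$, so $\dim\symn\geq 1$. On the other hand, the element with cycle structure given by the binary digits of $n$ is an isolated vertex, i.e.\ a $0$-dimensional facet. So $X_{\symn}$ is not pure.

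For (ii), the case $n\leq 4$ is immediate, since every vertex is isolated and the core is empty, hence connected. For $n=5,6,7$ we exhibit two distinct non-trivial components, working in the schematic and using the centralizer formula.
\begin{itemize}
\item In $\Sym{5}$ the only elements of mixed order have cycle type $2\cdot 3$. A $3$-cycle commutes with exactly one transposition, and a transposition commutes with exactly one $3$-cycle. So the non-isolated part is a disjoint union of $10$ edges.
\item In $\Sym{6}$ and $\Sym{7}$ we look at the classes $2^3$ and $3^2$. An element of type $2^3$ has centralizer $(\Z_2\wr\Sym{3})\times\Sym{n-6}$. Its $q$-elements for odd $q$ are of type $3^2$ (for $n=6,7$; note that $\Sym{1}$ is trivial). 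An element of type $3^2$ has centralizer $(\Z_3\wr\Sym{2})\times\Sym{n-6}$, whose $2$-elements are of type $2^3$ only. So the union of these two classes is closed under adjacency. It is non-isolated, because $(1,3,5)(2,4,6)$ commutes with $(1,4)(2,5)(3,6)$. It is disjoint from the component containing the edge $(1,2)\,(3,4,5)$.
\end{itemize}
So mono-connectedness fails for $n=5,6,7$.

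For $n\geq 8$ the plan is to show that every non-isolated vertex is joined by a path to the set $T$ of transpositions and $3$-cycles, and that $T$ lies in one component.
\begin{enumerate}
\item We first show that $T$ is connected. A transposition $(a,b)$ is adjacent to every $3$-cycle on the complement of $\{a,b\}$, and vice versa. Since $n\geq 6$, any two transpositions are joined via such a $3$-cycle, possibly through an intermediate transposition.
\item Next, any $p$-element $g$ with at least $2$ fixed points ($p$ odd), or at least $3$ fixed points ($p=2$), is adjacent to an element of $T$ supported on those fixed points.
\item The remaining non-isolated $g$ have small fixed sets. For these we take a commuting $q$-element $h$ and replace it by a suitable power, or by a single cycle-block of it that still commutes with $g$. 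We then argue that this element has enough fixed points for step 2 to apply; for example, $(1,3,5)(2,4,6)$ is joined to $(7,8)$.
\end{enumerate}

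I expect step 3 to be the main obstacle. It requires a case analysis on the cycle type of $g$, which is constrained by Lemma~\ref{lemma:symmetric:facets}(iv) since $g$ is not isolated, and on how $h$ permutes the cycles of $g$. I would first try to show that some $p$- or $q$-element obtained from the wreath-product structure of $C_g$ moves few points, and induct on the size of the support. If this becomes unwieldy, I would fall back on Lemma~\ref{lemma5}, applied class by class via the schematic as was done above for $\Sym{9}$. In that approach, generation of $\symn$ by two centralizers is checked as there, using a $3$-cycle and a $5$-cycle, or a transposition and a $3$-cycle.
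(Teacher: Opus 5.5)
\textbf{Genuine gap: Step 3 for $n\geq 8$.} Your part (i), the case $n\le 4$, and your treatment of $S_6,S_7$ (the classes $2^3$ and $3^2$ are closed under adjacency) are correct. Your strategy for $n\geq 8$, connecting every non-isolated vertex to the set $T$ of transpositions and $3$-cycles, is also the paper's. But Step 3 carries the whole direction ``$n\geq 8\Rightarrow$ mono-connected'', and you leave it as a plan. Worse, the mechanism you propose fails: a non-isolated $g$ need not have \emph{any} neighbour with enough fixed points for Step 2.

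Take $g=(1,2,3,4,5)(6,7,8,9,10)\in S_{10}$. Here $C_g\cong\Z_5\wr S_2$ has order $50$, so every element of $C_g$ of order prime to $5$ is an involution interchanging the two $5$-cycles. Every neighbour of $g$ is therefore of type $2^5$, with no fixed points. Its support is as large as that of $g$, so an induction on support does not get started either. Similarly, $(1,2,3,4)(5,6,7,8)(9,10,11,12)\in S_{12}$ has only fixed-point-free neighbours, of type $3^4$. Falling back on Lemma~\ref{lemma5} does not help: you would still need the non-isolated part of the schematic to be connected for every $n\geq 8$, which is the same cycle-type problem.

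\textbf{The missing idea.} Pass through a second, or third, intermediate vertex obtained by permuting a few cycles of the neighbour. By (S1)/(S2), counting fixed points as $1$-cycles, a non-isolated vertex not covered by Step 2 is one of two kinds:
\begin{enumerate}[(a).]
\item a $p$-element, $p$ odd, with at most one fixed point and two $p^e$-cycles, $e\geq 1$;
\item a $2$-element with at most two fixed points and three $2^e$-cycles, $e\geq 1$.
\end{enumerate}
In case (a), let $h$ (type $2^{p^e}$) interchange the two $p^e$-cycles. Let $k$ (type $3^2$) cyclically permute three of the transpositions of $h$, which is possible since $p^e\geq 3$. Then $k$ fixes $n-6\geq 2$ points, giving the path $g\,\text{---}\,h\,\text{---}\,k\,\text{---}\,(x,y)$.

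In case (b), let $h$ (type $3^{2^e}$) cyclically permute the three $2^e$-cycles. Let $k$ (type $2^3$) interchange two $3$-cycles of $h$. Let $\ell$ be the product of those two $3$-cycles: it has type $3^2$, commutes with $k$, and fixes $n-6\geq 2$ points. This gives $g\,\text{---}\,h\,\text{---}\,k\,\text{---}\,\ell\,\text{---}\,(x,y)$. This is exactly how the paper finishes.

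\textbf{Minor slip.} In $S_5$ a transposition commutes with two $3$-cycles, $(c,d,e)$ and $(c,e,d)$. So the non-isolated part is $10$ disjoint paths on three vertices, not $10$ edges. Your conclusion for $n=5$ is unaffected.
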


\begin{proof}%[of Proposition \ref{proposition1}]
For $n\leq 4$, all elements of $\Sym{n}$ have prime power order,
so the clique complex just consists of isolated vertices
by Lemma \ref{lemma4}(i). (Alternatively,
$\dim\symn=0$ precisely when $n\leq 4$.) In any case, $\Sym{n}$
is pure and mono-connected.

There is an isolated vertex, hence a $0$-dimensional
facet, in $\symn$ for all $n\geq 5$.
To see the lack of purity it thus suffices to
exhibit a face of dimension $>0$. 
Since $n\geq 5$, the element $(1,2)(3,4,5)\in \Sym{n}$ will do.
%\hl{MB: I changed this preceding sentence. The old text is commented out in the proof, along with the reason I didn't believe it.}

%If $n$ is not prime then
%an $n$-cycle is a facet having dimension $>0$. If $n$ is prime
%then an $(n-1)$-cycle suffices.
%\hl{MB: don't believe this! If $n = 2^n+1$ is a Fermat prime, then an $(n-1)$-
%cycle is an isolated vertex.}

For $5\leq n\leq 7$ some easy calculations show that the clique complex
has both isolated vertices and multiple larger connected components. 
For example, when $n=7$, there is a single component containing the
$2$-, $3$-, $4$- and $5$-cycles and the double transpositions.
Fix $i\in\{1,2,\ldots,7\}$; then the elements of cycle types
$(2^3,1)$ and $(3^2,1)$, where the single fixed point is $i$, form
a connected component. There are thus another $7$ separate components;
also, the $7$-cycles are all isolated vertices. These $\symn$ are not
mono-connected. 

For $n\geq 8$ we claim that
$\Sym{n}$ is mono-connected with 
the core $X_\circ$ consisting
of those vertices not satisfying (S1) or (S2) above.

\begin{enumerate}[(i).]
    \item If $(a,b)$ and $(c,d)$ are not necessarily disjoint
    $2$-cycles then there
    are edges $(a,b)\,\text{---}\,(e,f,g)$ $\,\text{---}\,(c,d)$, 
    where $e,f,g\not\in\{a,b,c,d\}$. If 
    $(a,b,c)$ and $(d,e,f)$ are not necessarily disjoint $3$-cycles then 
    there are 
    edges $(a,b,c)\,\text{---}\,(g,h)\,\text{---}\,(d,e,f)$
    where $g,h\not\in\{a,b,c,d,e,f\}$.
    Thus,
    all $2$- and $3$-cycles are in $X_\circ$.
    \item A $2$-element that fixes three or more points is connected
    by an edge to a $3$-cycle in $X_\circ$, and a $p$-element ($p>2$)
    that fixes two or more points is connected by an edge to
    a $2$-cycle in $X_\circ$.
    \item If $g$ is a non-isolated vertex, not already dealt with, then
    either (a). $g$ is a $p$-element ($p>2$) that fixes $0$ or $1$ points
    and with at least one repeated cycle length, or (b). $g$ is a $2$-element
    that fixes $0,1$ or $2$ points and has a cycle length which is
    repeated at least $3$ times. 
    \item In case (a), suppose that $g$ contains at least two $p^e$-cycles
    for some $e$.
    Then $g$ is centralized by an element $h$ of cycle structure $2^{p^e}$,
    which in turn is centralized by a $k$ of shape $3^2$, i.e. there
    are edges $g\,\text{---}\,h\,\text{---}\,k\text{---}\,(x,y)\in X_\circ$,
    where, as $n\geq 8$, the points $x,y$ are chosen to 
    not be in the support of $k$.
    \item In case (b), suppose that $g$ contains at least three 
    $2^e$-cycles. Then $g$ is centralized by an $h$ of shape 
    $3^{2^e}$. If $e=0$ then $g$ fixes at least three points,
    a contradiction, hence $2^e\geq 2$ and $h$ in turn is centralized
    by a $k$ of shape $2^3$, and \emph{this\/} in turn by an $\ell$
    of shape $3^2$. This gives edges 
    $g\,\text{---}\,h\,\text{---}\,k\text{---}\,\ell\,\text{---}\,(x,y)\in X_\circ$,
    where $x,y$ are not in the support of $\ell$.
\end{enumerate}
This completes the proof of the claim.
\qed 
\end{proof}

%%%%%%%%%%%%%%%%%%%%%%%%%%%%%%%%%%%%%%%%%%%%%%%%%%%%%

\subsection{Nilpotent and soluble groups}
\label{section:clique:complex:nilpotent}

The nilpotent groups have some of the better-behaved clique
complexes.

If $G$ is nilpotent of order
$q_1q_2\ldots q_n=p_i^{m_1}p_2^{m_2}\ldots p_n^{m_n}$
with the $p_i$ distinct primes
(in particular,
if $G$ is Abelian with this order) then $G$ is isomorphic
to the direct product of its Sylow subgroups:
$G\cong G_1\times G_2\times\cdots\times G_n$ with $G_i$ having order
$q_i$; see, for example \cite{MR1357169}*{5.2.4}.
By Lemma \ref{lemma4}, the clique complex $X_G$
is the
$n$-fold join $X_{G_1} * X_{G_2}*\cdots * X_{G_n}$,
with $X_{G_i}$ a set of $q_i-1$ isolated vertices.

The clique complexes of nilpotent groups are thus members of
a more general family: let $a>0$ be an integer and 
write $X(a)$ for the $0$-dimensional 
complex consisting of $a$ isolated vertices. If 
$a_1,a_2,\ldots,a_n$ are integers
with $a_i>0$, then let:
  \begin{equation}
\label{equationqt8jpb4ymt}
    X(a_1,a_2,\ldots,a_n)= 
%    \bigast_{i=0}^n X(a_i)
%    {\mathlarger{\mathlarger{\mathlarger{*}}}}_{i=0}^n X(a_i),
X(a_1)*X(a_2)*\cdots *X(a_n),
\end{equation}
be the $n$-fold join of the spaces $X(a_i)$. The clique complex
$X_G$, for $G$ nilpotent, is then the complex $X(q_1-1,q_2-1,\ldots,q_n-1)$.
The ``$-1$"s are because $X_{G_i}$ is composed of 
$|G_i|-1$ isolated vertices.
An $i$-face of $X(a_1,a_2,\ldots,a_n)$
corresponds to an $n$-tuple $(x_1,x_2,\ldots,x_n)$ where each $x_j$ 
is an integer $0\leq x_j\leq a_j$ and with exactly $i+1$ of the 
$x_i\not= 0$.
For example, $(x_1,0,\ldots,0)$ and
$(0,x_2,0,\ldots,0)$ are vertices and
$(x_1,x_2,0,\ldots,0)$ is an edge that joins them.
The facets are the $n$-tuples with
$x_i\not= 0$ for all $i$.
In particular, $X(a_1,a_2,\ldots,a_n)$ is pure $(n-1)$-dimensional.
We include in this family the empty complex, which arises
when the set of $a$'s is empty.

If $n=1$ then $X(a_1)$ is disconnected with $a_1$ isolated vertices.
In particular $X(a_1)$ is mono-connected: the non-isolated vertices
are the empty complex, and the empty complex is connected.
If $n>1$ then $X(a_1,a_2,\ldots,a_n)$ is connected: the vertices
$(0,\ldots,0,x_j,0,\ldots,0)$ and $(0,\ldots,0,x_k,0,\ldots,0)$
for $j\not= k$,
are connected by the edge $(0,\ldots,0,x_j,0,\ldots,0,x_k,0,\ldots,0)$.

In any case:

\begin{proposition}
The nilpotent groups are mono-connected.
\end{proposition}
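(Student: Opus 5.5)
The plan is to reduce to the join description established just before the statement. If $G$ is nilpotent of order $q_1q_2\ldots q_n$ with $q_i=p_i^{m_i}$ for distinct primes $p_i$, then $G\cong G_1\times\cdots\times G_n$ is the direct product of its Sylow subgroups. These factors have pairwise coprime orders, so repeated application of Lemma \ref{lemma4}(ii) gives an isomorphism of clique complexes
$$X_G\cong X_{G_1}*X_{G_2}*\cdots*X_{G_n}=X(q_1-1,\ldots,q_n-1).$$
Since mono-connectedness is a property of the underlying simplicial complex, it will be enough to verify it for the complexes $X(a_1,\ldots,a_n)$ with all $a_i>0$.

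I will split into cases according to $n$.

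\emph{Case $n=0$.} Here $G$ is trivial and $X_G$ is the empty complex. This is connected by convention, and so it is trivially mono-connected.

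\emph{Case $n=1$.} Here $G$ is a $p$-group and $X_G=X(q_1-1)$ consists only of isolated vertices, in agreement with Lemma \ref{lemma2}(iii), since every centralizer is a $p$-group. The set of non-isolated vertices is therefore empty, the core is the empty complex, and this is connected by convention.

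\emph{Case $n\geq 2$.} Here I will show that $X_G$ has no isolated vertices and is connected, so that the core is all of $X_G$. Every vertex is some $(0,\ldots,0,x_j,0,\ldots,0)$. Because $a_k>0$ for each $k\neq j$, this vertex lies on an edge $(0,\ldots,x_j,\ldots,x_k,\ldots,0)$, so it is not isolated. For connectivity, two vertices in different coordinates $j\neq k$ are joined directly by such an edge. Two vertices $x_j,x_j'$ in the same coordinate are joined by the path $x_j\,\text{---}\,x_k\,\text{---}\,x_j'$ for any choice of $k\neq j$ and $1\leq x_k\leq a_k$.

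In group-theoretic terms, the last case says that a $p_j$-element and a $p_k$-element taken from different Sylow subgroups always commute, so the $1$-skeleton is connected.

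I do not expect a genuine obstacle anywhere in this argument. The only delicate point is the degenerate cases $n\le 1$, and these rest entirely on the convention that the empty complex is connected. The same-prime path in the case $n\geq 2$ needs a second prime, which is exactly what the assumption $n\geq 2$ supplies.
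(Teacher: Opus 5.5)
Your proposal is correct and follows essentially the paper's argument: reduce via Lemma \ref{lemma4}(ii) to the join $X(q_1-1,\ldots,q_n-1)$, observe that for $n=1$ every vertex is isolated, so the core is the empty complex (connected by convention), and show that for $n>1$ the join is connected. The path $x_j\,\text{---}\,x_k\,\text{---}\,x_j'$ you give for two vertices in the same coordinate, and your treatment of the trivial group, supply details the paper leaves implicit.
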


The complexes (\ref{equationqt8jpb4ymt}) also provide a description
of the $p$-restricted clique complexes of nilpotent groups. If $p$ is 
equal to $p_i$ for some $i$, then 
$X_G^p$ consists of those tuples
$(x_1,\ldots,x_{i-1},0,x_{i+1},\ldots,x_n)$
where there is no restriction on the $x_j$
for $j\not= i$.
In particular, for $G$ nilpotent as above:
$$
X_{G}^{p}\cong X(q_1-1,\ldots,\widehat{q_i-1},\ldots,q_n-1),
$$
with the hat denoting omission.

Unfortunately, our understanding for soluble groups
is very incomplete,
except in one simple case:
the dihedral group of symmetries of an $n$-gon,
for $n\geq 3$.
Although this is no longer nilpotent in general, it has a cyclic
subgroup $\Z_n$ of rotations of index $2$, and the remaining
elements are reflections of order $2$.
The reflections form a single conjugacy class of size $n$ when $n$ is odd,
and two classes of size $n/2$ when $n$ is even.
In either case, the centralizer of a reflection is a $2$-group, so
these are isolated vertices in the clique complex by
Lemma \ref{lemma2}(iii).
The remaining elements of the dihedral group lie in the cyclic subgroup,
and so the clique complex for the dihedral group consists
of the subcomplex $X_{\Z_n}$, together with a cloud of
isolated vertices corresponding to the reflections.
If $n$ is a prime power then $X_{\Z_n}\cong X(n)$, 
composed entirely of isolated vertices, otherwise
$n=q_1q_2\ldots q_m\,(m\geq 2)$ and 
$X_{\Z_n}\cong X(q_1-1,q_2-1,\ldots,q_m-1)$
is connected.

In any case, the dihedral groups are mono-connected for all $n\geq 3$.

%%%%%%%%%%%%%%%%%%%%%%%%%%%%%%%%%%%%%%%%%%%%%%%%%%%%%

\subsection{The alternating groups}
\label{section:clique:complex:alternating}

We now turn our attention to the simple groups.

The
picture for the alternating groups
is similar to that for the symmetric groups, and so we just state
the results and leave the justifications to the reader. 
The dimension of an element $g\in A_n$ is inherited
from its dimension in $\symn$, 
and if $p_0,p_1,\ldots,p_i$ are distinct \emph{odd\/}
primes then a $g\in A_n$ with cycle structure 
$(p_0,p_1,\ldots,p_i)$ will be an $i$-face in the clique complex,
once $n$ is sufficiently large. The dimensions $\dim A_n$
thus tend to infinity as $n$ does. 

An analysis of centralizers shows that the facets
in $A_n$ are:
\begin{enumerate}[(i).]
    \item The facets of $\symn$ that also happen to be elements of 
    $A_n$, i.e. the even permutations that satisfy 
    Lemma \ref{lemma:symmetric:facets}(iii).
    \item The $g\in A_n$ with cycle structure
  $(\lambda_1^{a_1},\ldots,\lambda_s^{a_s})$ with all the $\lambda_j$
  odd, exactly one of the $a_i=2$, and the rest equal to $1$.
  \item The $g\in A_n$ with all the $\lambda_j$
  odd and with at least one divisible by $3$, 
  one of the $a_i=3$, and the rest equal to $1$.
\end{enumerate}
The elements in cases (ii) and (iii) are not facets in $\symn$. 

The isolated vertices are thus those facets of $\symn$ which are vertices
--- Lemma \ref {lemma:symmetric:facets}(iv) or (S1) and (S2) above ---
and also happen to lie in $A_n$, together with the following additional ones:
\begin{enumerate}[({A}1).]
\item The $g$ with cycle structure
  $(\lambda_1^{a_1},\ldots,\lambda_s^{a_s})$, where for a fixed prime
  $p>2$, each $\lambda_j$ is a distinct power of $p$, exactly one of the
  $a_i=2$, and the rest are equal to $1$.
\item The $g$ with each $\lambda_j$ a distinct power of $3$,
one of the $a_i=3$ and the rest equal to $1$.
\end{enumerate}

An analogous (albeit slightly more involved) calculation to that 
found in the proof of Proposition \ref{proposition1}
shows that any two elements of $A_n$, subject to certain conditions
on $n$, that are not of the types
(S1), (S2), (A1) or (A2), lie in the same connected
component. 
In particular:

\begin{proposition}
  \label{proposition:alternating:monoconnected}
   \begin{enumerate}[(i).]
\item $A_n$ is pure if and only if $n\leq 6$.
\item $A_n$ is mono-connected if and only if $n\not=7,8,9$.
  \end{enumerate}
\end{proposition}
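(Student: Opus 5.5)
Part (i) and part (ii) are handled separately, following the pattern of Proposition \ref{proposition1}.

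\textbf{Part (i).} For $n\leq 6$ I check that every non-trivial element of $A_n$ has prime power order.
\begin{itemize}
\item For $n\leq 4$ this is immediate.
\item The only even cycle structures with two distinct primes in the lcm are $(2,3)$-type. Structure $(3,2)$ is odd. Structure $(3,2^2)$ needs $7$ points. Structure $(6)$ is odd. Structure $(5,2)$ is odd.
\end{itemize}
So $A_5,A_6$ are EPPO-groups. By Lemma \ref{lemma4}(i), $X_{A_n}$ is then $0$-dimensional and hence pure.

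For $n\geq 7$, I exhibit a positive-dimensional face together with an isolated vertex.
\begin{itemize}
\item The element $(1,2)(3,4)(5,6,7)$ is an edge.
\item For the isolated vertex, use (S2) with a single odd-prime-power cycle fixing few points, or use (A1)/(A2). For example, in $A_7$ the $7$-cycle has centralizer of order $7$. In general, take the base-$2$ type isolated vertex from (S1) when it is even, or otherwise a $p$-cycle configuration from (A1).
\end{itemize}
A uniform choice is to write $n$ in a suitable base and check by Lemma \ref{lemma2}(iii) that the centralizer in $A_n$ (half of the $\symn$ centralizer when that contains an odd permutation) is a $p$-group. A single isolated vertex together with one edge kills purity.

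\textbf{Part (ii), large $n$.} I would mimic the claim in the proof of Proposition \ref{proposition1} for $n\geq 10$. The core should consist of all vertices not of type (S1), (S2), (A1) or (A2).
\begin{itemize}
\item Since transpositions are unavailable, I replace them by double transpositions $(a,b)(c,d)$ and keep $3$-cycles as the basic hubs. Two double transpositions are joined through a $3$-cycle disjoint from both, using $8+3\leq n$ points, or through a two-step path when they overlap. Two $3$-cycles are joined through a disjoint double transposition, using $6+4=10\leq n$ points. This is exactly where $n\geq 10$ enters.
\item A $2$-element fixing at least $3$ points commutes with a $3$-cycle on those points. An odd $p$-element fixing at least $4$ points commutes with a double transposition.
\item The remaining non-isolated vertices have few fixed points and repeated cycle lengths. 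They are handled as in steps (iv) and (v) of that proof. The connecting element (shape $2^{p^e}$, or $3^{2^e}$) may be odd; if so, I multiply it by a suitable commuting element, or double it up (e.g.\ shape $2^{2p^e}$ when there are two repeated pairs), to land in $A_n$.
\end{itemize}

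\textbf{Part (ii), small $n$.} For $n\leq 6$, $A_n$ is $0$-dimensional, so it is mono-connected with empty core. For $n=7,8,9$ I would exhibit two distinct non-trivial components, as in the $\Sym{7}$ example.
\begin{itemize}
\item In $A_7$, the elements of shape $(3^2)$ fixing a given point $i$ and the elements $(2^2)$ on the remaining points interact in a limited way. Elements of shape $(3^2,1)$ are centralized only by their own $3$-group together with possibly involutions that are odd. So I expect the edge $(2^2)(3)$-components and separate $5$-element components with $(2)$-free pieces.
\item For $n=8,9$ the obstruction should come from elements like $(5)(3)$ in $A_8$, or $(3^3)$/$(3^2)$ type pieces that cannot reach the main component for lack of room.
\end{itemize}

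\textbf{Main obstacle.} The delicate part is these small cases and the parity fixes in the general connecting paths. For $n=7,8,9$ I would first compute centralizer orders in $A_n$ for each class, using Lemma \ref{lemma2}, to build the schematic $\X_{A_n}$. I would then use Lemma \ref{lemma5} to identify the components. I expect to need a short case check to confirm that there are at least two non-isolated components.
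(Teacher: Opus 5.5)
Your plan (imitate the proof of Proposition~\ref{proposition1}, using $3$-cycles and double transpositions as hubs) is the route the paper indicates; the paper only asserts that an analogous, slightly more involved calculation works. The proposal still has a genuine gap: the ``only if'' half of (ii) is never proved, and the mechanisms you predict for it are wrong.

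In $A_7$ the classes $(3^2,1)$, $(5,1^2)$, $(4,2,1)$ and $(7)$ are all isolated. The involutions centralizing $(3^2,1)$ and $(5,1^2)$ in $\Sym{7}$ are odd, so their centralizers in $A_7$ have orders $9$ and $5$. The only non-isolated vertices are therefore the $3$-cycles and the double transpositions. An edge has shape $(3,2^2)$, so its two supports must be complementary, and this gives $\binom{7}{3}=35$ components, each a $K_{2,3}$.

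In $A_8$ and $A_9$ the obstruction is not $(5)(3)$, which is an edge rather than a vertex, nor $(3^3)$, which is isolated in $A_9$ (type (A2)). It is the pair of classes of shapes $(2^4,1^{n-8})$ and $(3^2,1^{n-6})$:
\begin{enumerate}[(a)]
\item their centralizers in $A_n$ have orders $2^6\cdot 3$ and $2\cdot 3^{n-6}$;
\item the $3$-elements centralizing the first class have the second shape;
\item the involutions in the centralizer in $A_n$ of the second class are the swap of its two $3$-cycles times a transposition of fixed points, so they have the first shape.
\end{enumerate}
These two classes therefore form a union of components disjoint from the component containing the $3$-cycles. This is also where $n\ge 10$ really enters: an element of shape $(3^2,1^{n-6})$ needs four fixed points to commute with a double transposition. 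Joining $3$-cycles is not the issue, since they are already connected in $A_8$ and $A_9$.

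Two further steps fail as stated.

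For $n\ge 10$, an odd connecting involution cannot always be repaired. Take $g$ of shape $(5^3)$ in $A_{15}$. It is non-isolated and of none of the types (S1), (S2), (A1), (A2). Yet $C_{A_{15}}(g)\cong\Z_5^3\rtimes A_3$ has odd order, so no even involution centralizes $g$. You must go through the $3$-element of shape $(3^5)$ permuting the three $5$-cycles, then an involution of shape $(2^6,1^3)$, then a $3$-cycle on its fixed points. Odd $p$-elements with exactly two or three fixed points also need separate treatment. For two disjoint double transpositions in $A_{10}$, it is the disjoint case, not the overlapping one, that needs a two-step path.

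In (i), you never show that $A_n$ has an isolated vertex for every $n\ge 7$. For $n=15$ the only shape of type (S1) is $(8,4,2,1)$, which is odd, and ``write $n$ in a suitable base'' is not an argument. A simpler fix avoids isolated vertices altogether:
\begin{enumerate}[(a)]
\item Let $m=n$ if $n$ is odd and $m=n-1$ if $n$ is even. An $m$-cycle $g$ is even with centralizer $\langle g\rangle$ in $\Sym{n}$, hence a facet of dimension $\omega(m)-1$.
\item If $\omega(m)=1$, this facet is an isolated vertex.
\item If odd primes $p_1,\ldots,p_k$ with $k\ge 2$ divide $m$, then $p_1+\cdots+p_k+4\le p_1\cdots p_k\le m$. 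This gives an even element with cycle lengths $p_1,\ldots,p_k,2,2$, which is a face of dimension $k$.
\end{enumerate}
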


%%%%%%%%%%%%%%%%%%%%%%%%%%%%%%%%%%%%%%%%%%%%%%%%%%%%%

\subsection{Finite Simple Groups of Lie Type}
\label{section:clique:complex:classical}

Let $G = G(q)$ be a finite simple group of Lie type over a field
$\F_q$ of order $q = p^e$ for some prime $p$.
Then the $\ell$-elements for primes $\ell$ dividing $|G|$ have two
flavours: either $\ell = p$, in which case we are considering a
\emph{unipotent} element, or $\ell\neq p$.

A large supply of non-unipotent elements are those contained 
in the maximal tori in $G$. 
These can be described,
see 
\cite{springersteinberg}*{CH. ii, \S1}. 
For example, when $G = \mathrm{PSL}_n(q)$ the conjugacy classes of
maximal tori correspond to partitions of $n$ as follows:
given a partition $(d_1,\ldots,d_k)$ of $n$ with $d_1\geq \cdots \geq d_k>0$,
we can realise the multiplicative group of $\F_{q^{d_i}}$
as a group of $d_i\times d_i$ matrices with entries in $\F_q$,
and then we can embed these groups \emph{block diagonally} in
$\mathrm{GL}_n(q)$ to get a corresponding maximal torus of that group.
Intersecting with $\mathrm{SL}_n(q)$ and passing to the quotient $\mathrm{PSL}_n(q)$ gives the maximal tori in these groups.

Since these tori are commutative, generically they provide 
examples of faces of
large dimension in the clique complex. 
In particular, we see that as $q$ or $n$ grows, the dimension of
facets of the clique complex can grow arbitrarily large.
(An analogous argument will hold for finite simple groups of any Dynkin type.)

The connectedness properties of the clique complex for Lie type groups depend both on the rank and the field.
A systematic analysis is tangential to the main thrust of the paper, but we can relatively easily settle the rank $1$ case.
In order to do this, we need information about the conjugacy classes in $G = \mathrm{PSL}_2(q)$, which is also useful for results later in the paper.

We first deal with even $q$, so suppose $q = 2^m$. 
Then in fact $G=\mathrm{SL}_2(q)$, and the conjugacy classes in $G$ are described in Table \ref{table:PSL:q:2}, which is deduced from \cite{MR0347959}*{Theorem 38.2}.
\begin{table}[t]
    \centering
  \begin{tabular}{cccc}\hline
    $\mathrm{PSL}_2(2^m)$&$D$&$A^\ell\,(1\leq\ell\leq\frac{q-2}{2})$&$B^\ell\,(1\leq\ell\leq\frac{q}{2})$\vrule width 0mm height 4 mm depth
  2mm\\\hline
  $g$&$2$&$[q-1,\ell]$&$[q+1,\ell]$\vrule width 0mm height 4 mm depth
  0mm\\
    $g^G$&$(q-1)(q+1)$&$q(q+1)$&$q(q-1)$\vrule width 0mm height 5 mm depth
  0mm\\
    $C_g$&$q$&$q-1$&$q+1$\vrule width 0mm height 5 mm depth
  2mm\\
    \hline
  \end{tabular}
\caption{Conjugacy classes of $\mathrm{PSL}_2(q)$ for $q=2^m\,(m\geq 2)$}
    \label{table:PSL:q:2}
\end{table}
The first row gives 
the order of the element $g$,
the second row
the size of the conjugacy class
$g^G$, and the third row the
size of the centralizers $C_g$  --- as $\mathrm{PSL}_2(q)$
has order $q(q-1)(q+1)$ in this case. 
The symbol $[x,y]$, for $x,y$ positive integers, denotes the quantity 
$x/\kern-2pt\gcd(x,y)$.
The classes in Table \ref{table:PSL:q:2} are represented by powers of the following elements:
\[
D = \left(\begin{array}{cc} 1 & 1 \\ 0 & 1 \end{array}\right), \quad
A = \left(\begin{array}{cc} \alpha & 0 \\ 0 & \alpha^{-1}\end{array}\right), \quad
B \text{ of order $q+1$}.
\]
Here $\alpha\in \F_q$ is a generator for the multiplicative group of nonzero elements, so $A$ generates a torus of order $q-1$, which is exactly the centralizer of $A$ (and all its nontrivial powers).

The element $B$ generates a maximal torus of $G$ constructed as described above:
viewing $\F_{q^2}$ as a two-dimensional $\F_q$-vector space, the elements of its multiplicative group can be represented as $2\times 2$ matrices with entries in 
$\F_q$.
If $B'$ is the matrix representing a generator for this group, 
then $B = (B')^{q-1}$.
The centralizer of $B$ and each of its powers is the group generated by $B$.

Now we turn to odd $q$, so suppose $q = p^m$ with $p$ an odd prime.
This time the projection map $\mathrm{SL}_2(q)\to G$ has kernel of order $2$; we denote this map by $(a_{ij})\mapsto [a_{ij}]$.
The conjugacy class information about $G$ is recorded in Table \ref{table:PSL:q:odd}; it can be deduced from \cite{MR0347959}*{Theorem 38.1}.
\begin{table}[t]
    \centering
\begin{tabular}{cccccc}\hline
    $\mathrm{PSL}_2(q)$&$D$&$H_1$&$H_2$&$A^\ell\,(1\leq\ell\leq m_1)$
    &$B^\ell\,(1\leq\ell\leq m_2)$\vrule width 0mm height 4 mm depth
  2mm\\\hline
  $g$&$2$&$q$&$q$&$[\frac{q-1}{2},\ell]$
  &$[\frac{q+1}{2},\ell]$\vrule width 0mm height 4 mm depth
  0mm\\
    $g^G$&$\frac{q(q\pm 1)}{2}$&$\frac{(q-1)(q+1)}{2}$&$\frac{(q-1)(q+1)}{2}$&$q(q+1)$
    &$q(q-1)$\vrule width 0mm height 5 mm depth
  0mm\\
    $C_g$&$q\mp 1$&$q$&$
    q$&$\frac{q-1}{2}$&$\frac{q+1}{2}$\vrule width 0mm height 5 mm depth
  2mm\\
    \hline
  \end{tabular}
      \caption{Conjugacy classes of $\mathrm{PSL}_2(q)$ for $q$
      a power of an odd prime. The numbers
      $m_1,m_2$ are explained in the text.}
    \label{table:PSL:q:odd}
\end{table}
Again, we list the order of the element $g$ in the first row, the sizes of the 
classes in the second row, and the sizes of the centralizers in the third row; as 
before, $[x,y]:=x/\kern-2pt\gcd(x,y)$.
This time, we have two classes of unipotent elements, with representatives 
labelled $H_1$ and $H_2$. 
The elements $A$ and $B$ in the table are analogous to those in the even 
characteristic case: $A$ is the image in $G$ of a diagonal element in 
$\mathrm{SL}_2(q)$ of order $q-1$, and $B$ is the image in $G$ of  an element of 
order $q+1$.
An added complication in odd characteristic arises because precisely one of the 
powers of one of the elements $A$ or $B$ gives an element of order $2$, labelled 
$D$ in the table.
Whether it is $A$ or $B$ giving rise to $D$ depends on whether $q$ is congruent 
to $1$ or $3$ modulo $4$.
If $q\equiv 1\kern-2mm\mod 4$, then $\frac{q-1}{2}$ is even, and 
$D = A^{\frac{q-1}{4}}$ is the element of order $2$;
in this case, we let $m_1 = \frac{q-5}{4}$ and $m_2 = \frac{q-1}{4}$.
Similarly, if $q\equiv 3\kern-2mm\mod 4$, then $\frac{q+1}{2}$ is even, and 
$D = B^{\frac{q+1}{4}}$ is the element of order $2$;
in this case, we let $m_1 = m_2 = \frac{q-3}{4}$.

The centralizer of each power of $A$ (respectively, $B$) described in the table 
is the cyclic group generated by $A$ (respectively, $B$).
To describe the centralizer of $D$, let:
\[
w = \left[\begin{array}{rc} 0 & 1\\ -1 & 0\end{array}\right],
\]
be a representative of the nontrivial element of the Weyl group of $G$. Then $w$ 
acts on the cyclic subgroup generated by $A$ by inversion, and the same for the 
cyclic subgroup generated by $B$.
In the case $D$ is a power of $A$, the centralizer of $D$ is therefore the 
dihedral group of order $q-1$ generated by $A$ and $w$;
in the case $D$ is a power of $B$ the centralizer is the dihedral group of order 
$q+1$ generated by $B$ and $w$.

\begin{proposition}\label{prop:PSL2_cliquecomplex}
Let $q$ be a prime power and let $G = \mathrm{PSL}_2(q)$.
Then $X_G$ is mono-connected if and only if $q = 2, 3, 4, 5, 7, 8$ or $9$.
\end{proposition}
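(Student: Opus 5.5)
The plan is to reduce mono-connectedness of $X_G$ to a statement about the orders of the maximal tori, using Tables \ref{table:PSL:q:2} and \ref{table:PSL:q:odd}. Set $d=\gcd(2,q-1)$, and let $q=p^m$ as in the text. The small non-simple cases $q=2,3$ are handled directly: $\mathrm{PSL}_2(2)\cong\Sym{3}$ and $\mathrm{PSL}_2(3)\cong A_4$ have only elements of prime power order. By Lemma \ref{lemma4}(i) their clique complexes are $0$-dimensional, and are therefore mono-connected with empty core. For $q\geq 4$ I will describe the non-isolated part of $X_G$ completely.

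\textbf{Step 1 (unipotent elements).} A non-trivial unipotent element has centralizer of order $q$, as recorded in the tables. This centralizer is a $p$-group, so by Lemma \ref{lemma2}(iii) every unipotent element is an isolated vertex.

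\textbf{Step 2 (semisimple elements).} Every remaining element lies in a conjugate of $T_1=\langle A\rangle$, cyclic of order $(q-1)/d$, or of $T_2=\langle B\rangle$, cyclic of order $(q+1)/d$. Each non-trivial power of $A$ or $B$ other than $D$ has centralizer exactly $T_1$ or $T_2$. Hence distinct conjugates of the $T_i$ intersect trivially. For $D$ with $q$ odd, the centralizer is dihedral. Its elements of odd order all lie in its unique cyclic subgroup of index $2$, which is the torus containing $D$. Consequently any edge $g_0g_1$ of $X_G$ has both endpoints in a single torus $T$, and that torus is determined by either endpoint. So the subcomplex of $X_G$ spanned by non-isolated vertices is the disjoint union, over all conjugates $T$ of $T_1$ and $T_2$, of the non-isolated parts of $X_T$. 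By Section \ref{section:clique:complex:nilpotent}, each $X_T$ is either a cloud of isolated vertices, when $|T|$ is a prime power or $1$, or a connected join $X(q_1-1,\ldots,q_k-1)$ with $k\geq 2$.

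\textbf{Step 3 (criterion).} Each torus class has many conjugates; for instance $T_1$ has $q(q+1)/2$ of them, since its normalizer has order $2|T_1|$. So if either $|T_1|$ or $|T_2|$ fails to be a prime power, there are at least two connected components with edges, and $X_G$ is not mono-connected. Conversely, if both orders are prime powers, then $X_G$ is $0$-dimensional and hence mono-connected. Thus $X_G$ is mono-connected if and only if $(q-1)/d$ and $(q+1)/d$ are both prime powers (or $1$). By the tables this holds if and only if $G$ is an EPPO-group.

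\textbf{Step 4 (number theory).} The main obstacle is solving the condition of Step 3. For $q=2^m$, the numbers $q-1$ and $q+1$ must both be odd prime powers. For odd $q$, one of $(q\pm1)/2$ is a power of $2$ and the other an odd prime power, which gives Catalan-type equations of the form $2^a\pm1=\ell^b$. I would first try to shortcut this by citing Suzuki's classification of simple EPPO-groups \cite{MR0136646}, which gives $q\in\{4,5,7,8,9,17\}$. Failing that, I would use Mihăilescu's theorem (or the elementary special cases $2^a\pm1=\ell^b$) together with the congruences forced by the parity of $q$.

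I note that this analysis also puts $q=17$ on the list: there the tori have orders $8$ and $9$, so $\mathrm{PSL}_2(17)$ is EPPO and $X_G$ is $0$-dimensional. Before finalising I would check this case against the paper's convention for mono-connectedness, since $q=17$ does not appear in the statement.
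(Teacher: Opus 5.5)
Your route is the same as the paper's. Unipotent elements are isolated, and every edge of $X_G$ lies in a single maximal torus; the dihedral centralizer of $D$ adds no new edges. Distinct tori meet trivially, and each torus class has more than one conjugate. So $X_G$ is mono-connected exactly when $(q-1)/d$ and $(q+1)/d$ are both prime powers or $1$, and this is then settled with Mih\u{a}ilescu's theorem.

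One small repair in Step 2: your ``hence'' giving trivial intersection of tori only covers non-involutions. For the involution $D$, use the fact that a dihedral group of order at least $6$ has a unique cyclic subgroup of index $2$. For $q=5$ the group $C_D$ is a Klein four-group, so ``unique'' is false there, but harmlessly so, since $C_D$ then has no non-trivial elements of odd order.

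The important point is your final paragraph. You should resolve it decisively rather than by checking conventions: $q=17$ is a genuine counterexample to the ``only if'' direction, so the statement as printed cannot be proved.
\begin{itemize}
\item For $q=17$ the tori have orders $8$ and $9$, and $C_D$ is dihedral of order $16$. So $\mathrm{PSL}_2(17)$ is an EPPO-group, as the paper itself records when quoting Suzuki, and $X_G$ consists of $2447$ isolated vertices.
\item The paper takes the empty complex to be connected, and its proof of this very proposition calls such a cloud ``trivially mono-connected''. By those conventions $X_G$ is mono-connected.
\end{itemize}

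The paper's proof goes wrong in the branch $\frac{q-1}{2}=2^\ell$. It shows that $\frac{q+1}{2}=2^{2^s-1}+1$ is not a \emph{prime}, but what is needed is that it is \emph{not a prime power}. For $s=2$ one gets $9=3^2$. Done correctly: since $2^s-1$ is odd, $2^{2^s-1}+1$ is divisible by $3$, so it is a prime power only if it equals $3^b$. Mih\u{a}ilescu's theorem, or an elementary argument, then gives $2^{2^s-1}+1\in\{3,9\}$, i.e.\ $q\in\{5,17\}$.

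So the correct result, which your Steps 1--4 prove, is that $X_G$ is mono-connected if and only if $q\in\{2,3,4,5,7,8,9,17\}$.
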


\begin{proof}
We first deal with even $q$, so suppose $q = 2^m$, and refer to Table 
\ref{table:PSL:q:2}  and its description above. 
The element $D$ has order $2$ and centralizer of order $q = 2^m$, so $D$ and its 
conjugates give isolated vertices in the clique complex.
Each element $A^i$ has centralizer of order $q-1$, which is just the subgroup of 
diagonal matrices. 
Each element $B^j$ has centralizer of order $q+1$, which is just the subgroup 
generated by $B$.
Since $q-1$ and $q+1$ are coprime and odd, there are no edges in the clique 
complex between elements of the form $A^i$ and $B^j$. 

The $\frac{q-2}{2}$ conjugacy classes of elements $A^{\ell}$ together account 
for $\frac{q(q+1)(q-2)}{2}$ elements of $G$.
A quick matrix calculation shows that the only conjugate of $A$ which is still 
diagonal is $A^{-1}$, so the $q(q+1)$ elements in the conjugacy class of $A$ 
give rise to $\frac{q(q+1)}{2}$ distinct centralizers of order $q-1$.
Each of these centralizers has $q-2$ non-identity elements and together cover 
all the conjugates of all powers of $A$, since the centralizer of $A$ itself is 
just the powers of $A$.
We conclude that these $\frac{q(q+1)}{2}$ centralizers must all have trivial 
pairwise intersection.
The upshot is that when $q-1$ is not a prime power, we have $\frac{q(q+1)}{2}$ 
distinct connected components of size greater than $1$ in the clique complex; 
since $\frac{q(q+1)}{2} >1$ in all cases, we see that $X_G$ is not mono-connected in these cases.

A similar argument to the previous paragraph works for the conjugates of 
elements $B^\ell$.
Hence if $q+1$ is not a prime power and $\frac{q(q-1)}{2}>1$, then $X_G$ is not 
mono-connected; this latter is fine as long as $q\neq 2$.
Now Catalan's conjecture --- proved
by Mih\u{a}ilescu in \cite{MR2076124}
--- implies that it can only happen that $2^m-1$ and $2^m+1$ are both positive 
powers of primes when $q=4$ or $q=8$.  
In these cases the clique complex just consists of isolated vertices and so is 
vacuously mono-connected.

Now we turn to odd $q$, so suppose $q = p^m$ with $p$ an odd prime and refer to 
Table \ref{table:PSL:q:odd} and its description above.
The unipotent elements represented by $H_1$ and $H_2$ still give isolated 
vertices in the clique complex.
The powers of $A$ and $B$ in the table still just have centralizers powers of 
$A$ or $B$, respectively.
In either case of the element $D$, the centralizer of $D$ does not provide any 
new edges in the clique complex, since the ``extra'' elements centralizing $D$ 
are just the reflections in the corresponding dihedral group, and $D$ itself has 
order $2$ (see Section \ref{section:clique:complex:nilpotent} above).
The remaining analysis is exactly the same as in the even characteristic case.
So we just need to ensure that at least one of  $\frac{q-1}{2}$ or 
$\frac{q+1}{2}$ is not a prime power. 
Since these are consecutive integers, one is even and one is odd; hence if they 
are both prime powers, one of them is a power of two.
If $\frac{q-1}{2} = 2^\ell$ for some $\ell$, then $q = 2^{\ell +1} + 1$.
By Catalan's conjecture again, we either have $q = 9$ or $q$ is a Fermat prime. 
We see that we must rule out the cases $\ell = 0$, $1$ and $2$, which are 
$q = 3$, $5$ and $9$.
Beyond these cases, for $q$ to be a prime, $\ell+1$ must itself be a power of 
$2$, so $\ell = 2^s - 1$, and then $\frac{q+1}{2} = 2^{\ell}+1 = 2^{2^s-1} +1$ 
is not a prime.
If $\frac{q+1}{2} = 2^\ell$ for some $\ell$, then $q = 2^{\ell +1} - 1$.
For this to be a nontrivial prime power, we have the small cases $\ell = 1$ and 
$\ell = 2$, which correspond to $q=3$ and $q=7$, or $q$ is a Mersenne prime, 
with $2<\ell = 2t$  even.
But then $\frac{q-1}{2} = 2^{\ell} - 1 = 2^{2t} - 1$ is composite.  

So we see that the only cases where all the vertices are isolated are as claimed 
in the statement. In these exceptional cases the clique complex for $G$ is a 
cloud of isolated vertices, so is trivially mono-connected.
In all other cases, it is not mono-connected.
\qed
\end{proof}

Note that we have already seen most of the ruled out cases of small $q$ in the proposition in different incarnations:
$\mathrm{PSL}_2(2) \cong \Sym{3}$,
$\mathrm{PSL}_2(3) \cong A_4$,
$\mathrm{PSL}_2(4)\cong \mathrm{PSL}_2(5) \cong A_5$,
$\mathrm{PSL}_2(9) \cong A_6$.

%We therefore pose the following \hl{question}:  
%Can you give a
%list of the simple groups which are not mono-connected?  
%\hl{Move whatever this turns out to be into Section
%2.6}

%%%%%%%%%%%%%%%%%%%%%%%%%%%%%%%%%%%%%%%%%%%%%%%%%%%%%

\subsection{The sporadic groups}
\label{section:clique:complex:sporadic}

The sporadic groups, despite the formidable size of some of them,
have quite small dimensions:

\begin{lemma}
  The $1$-dimensional sporadic simple groups are:
  $$
  M_{11},M_{12},M_{22},M_{23},M_{24},J_1,J_2,J_3,O^\prime N,
  Ru,HS,S\kern-1pt uz,
  $$
  while the remaining groups:
  $$
  J_4, Co_1,Co_2,Co_3,McL,He,HN,Th,Ly,Fi_{22},Fi_{23},Fi^\prime_{24},
  B,M,
  $$
  are $2$-dimensional.
\end{lemma}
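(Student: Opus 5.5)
By Lemma \ref{lemma2}(i), $\dim X_G=\max_{g\in G}\omega(g)-1$. The statement is therefore equivalent to a claim about spectra: for the groups in the first list, some element order has exactly two distinct prime divisors and none has three or more; for the groups in the second list, some element order has exactly three distinct prime divisors and none has four. The plan is to read this off the spectrum $\omega_G$ of each sporadic group, equivalently the list of orders of elements (the conjugacy class names $nA,nB,\ldots$) in the ATLAS. Since the element orders of a sporadic group are all known explicitly, this is a finite check.

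First, the upper bound. Every sporadic group contains elements of order $6$, so each group has dimension at least $1$. For the first list I would go through the element orders and check that none is divisible by three distinct primes. For example, $M_{24}$ has maximal element orders $23,21,15,14,12,11,10,8,7,6,5,4,3,2$, and each involves at most two primes. Similarly the largest composite orders of $J_1$ are $6,10,15,19$, those of $Ru$ include $20,24,26,28,29$, and those of $Suz$ include $20,21,24$. In each case no order of the form $pqr$ or a multiple of one appears. For the second list I would exhibit a witness of dimension $2$ for each group: for instance $30\in\omega_G$ for $Co_1,Co_2,Co_3,McL$ (at least $30$ for $Co_1,Co_2,Co_3$; $McL$ has $30A$), $HN,Th,Ly,B,M$ and $Fi_{23},Fi'_{24}$; $42$ for $He$ and $Fi_{22}$; and $66$ or $42$ for $J_4$. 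I would then check that no element order has four prime factors. It suffices to see that $210=2\cdot3\cdot5\cdot7$ and the other products of four primes dividing $|G|$ do not occur in $\omega_G$. For $M$, the largest composite orders, such as $119,105,110,$ etc., have at most three primes, so $105=3\cdot5\cdot7$ is consistent with dimension $2$.

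Second, the organisation. A useful shortcut is to use the Gruenberg--Kegel graph via Lemma \ref{lemma3}. A face of dimension $k$ gives a $(k+1)$-clique in $\Gamma_G$, but not conversely. So whenever $\Gamma_G$ (as tabulated in the literature, e.g.\ in \cite{MR4506711}) is triangle-free, the group is at most $1$-dimensional, and whenever $\Gamma_G$ has no $4$-clique, the group is at most $2$-dimensional. This disposes of most upper bounds in one stroke. Any remaining cases, where $\Gamma_G$ contains a clique that is not realised by a single element, must be checked directly against $\omega_G$. For instance, commuting elements of orders $2,3,5$ existing pairwise does not imply an element of order $30$.

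The main obstacle is the second list, particularly $M$, $B$ and $Fi'_{24}$. There $\Gamma_G$ is dense, and one has to confirm from the full list of element orders that no order is divisible by four primes, for example that $210\notin\omega_M$. I would handle this by going through the ATLAS class lists and recording, for each class, the number of distinct prime divisors of its order. The maximum of these values is $\dim+1$.
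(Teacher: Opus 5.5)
Your reduction is the paper's own. By Lemma~\ref{lemma2}(i) the dimension is $\max_g\omega(g)-1$, and the paper's proof consists solely of reading element orders off the $\ams{ATLAS}$. The failure is in the verification, and for one group it cannot be repaired: your witness $42\in\omega_{\mathrm{He}}$ is false. The Held group has spectrum $\omega_{\mathrm{He}}=\{1,2,\ldots,8,10,12,14,15,17,21,28\}$. It has no classes of elements of order $30$, $42$, $35$ or $105$, so no element order of He has three distinct prime divisors.

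As a structural cross-check, the involution centralizers of He are $2^{1+6}_+.\mathrm{PSL}_3(2)$ and $2^2.\mathrm{PSL}_3(4).2$. Every odd-order element of either maps to an element of the same order in $\mathrm{PSL}_3(2)\cong\mathrm{PSL}_2(7)$ or $\mathrm{PSL}_3(4)$. Both are EPPO-groups (see the list after Lemma~\ref{lemma4}). So no involution commutes with an element of order $15$ or $21$, which excludes $30$ and $42$; and $35\notin\omega_{\mathrm{He}}$ excludes $105$. Hence He is $1$-dimensional and belongs in the first list. The statement, and the paper's $\ams{ATLAS}$ reading, are wrong on this point, so no argument can supply the $2$-face you need. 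He is exactly the phenomenon you warn about yourself: $\Gamma_{\mathrm{He}}$ contains the triangles $\{2,3,5\}$ and $\{2,3,7\}$, but neither is realised by a single element.

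The other slips are harmless. $Fi_{22}$ has no element of order $42$ either, but it has $30A$ (cycle type $(5,3,2)$ in $S_{10}<Fi_{22}$), so it stays $2$-dimensional. $19$ and $29$ are primes, not composite orders, and $Ru$ has no elements of order $28$. With He moved, your remaining witnesses and upper bounds are correct:
\begin{itemize}
\item every other group of the second list has $30$ or $42$ in its spectrum (for $Ly$ both arise in $2.A_{11}$);
\item none of them has an element order with four prime divisors;
\item no group of the first list has an element order divisible by three primes.
\end{itemize}
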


This information can be read-off the $\ams{ATLAS}$
\cite{MR0827219}
entries
for these groups, where the conjugacy classes are listed and labelled
according to the orders of elements. For example, the Mathieu group
$M_{24}$ is $1$-dimensional, with vertices the conjugacy classes:
$$
2A,2B,3A,3B,4A,4B,4C,5A,7A,7B,8A,11A,23A,23B,
$$
(where $nA,nB,\ldots$ indicates the conjugacy classes
of elements of order $n$) and edges:
$$
6A,6B,10A,12A,12B,14A,14B,15A,15B,21A,21B.
$$
We can draw the schematic $\X_{M_{24}}$ for $M_{24}$ using information
about the orders of elements and the orders of their centralizers. 
The elements in $4B, 8A, 11A, 23A$ and $23B$ 
have centralizers with orders $2^7,2^4,11,23$ and $23$, and so
these are isolated vertices by Lemma \ref{lemma2}(iii).

The remaining analysis uses {\sc Gap} 
and the standard generators $a,b\in\Sym{24}$,
where $a$ lies in class $2B$ and $b$ in $3A$. 
For each edge conjugacy class above, decompose
a representative $g=g_0g_1$ and draw an edge
connecting the class of $g_0$ and $g_1$.
The resulting schematic $\X_{M_{24}}$ is shown 
in Figure \ref{figure2}.
%Consider for example
%the class
%$2B$, which contains the representative $g=(ababbababb)^3$ that has
%centralizer $C_{2B}$ of size $2^9\cdot 3\cdot 5$. This centralizer contains
%elements of order $5$, hence there is an edge corresponding to the
%class $10A$ joining the vertices $2B$ and $5A$.
%%There are $320$ elements of order $3$ in $C_{2B}$, all of which
%have centralizers of order $504$, hence lie in the class $3B$.
%This gives an edge $6B$ \hl{check not $6A$} joining
%$2B$ and $3B$.
%
%We claim that there are no other edges of $\X_{M_{24}}$ incident
%with $2B$. We can immediately rule out the possibility of
%edges to $2A,4A$ and $4C$. As all the elements of $3A$ have centralizer
%order $1080$, there is no edge to $3A$. The classes $7A,7B$ have centralizers
%containing elements of order $2$, but these elements all lie in $2A$ (again
%by looking at the orders of \emph{their\/} centralizers).

\begin{figure}
\begin{tikzpicture}
\draw [white] (0,0)--(\textwidth,1.75); % white line to determine figure
                               % height; leave
%
\node at (7.25,3.5) {\includegraphics[scale=1]{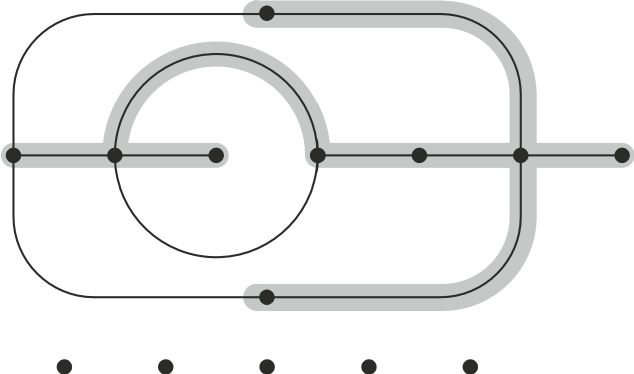}};
\node[color=red] at (1.7,4.1){$2A$};
\node[color=red] at (12.8,4.1){$4C$};
\node[color=red] at (5.95,4.1){$4A$};
\node[color=red] at (6.4,6.15){$7A$};
\node[color=red] at (6.4,2){$7B$};
\node[color=red] at (3,0.9){$4B$};
\node[color=red] at (4.7,0.9){$8A$};
\node[color=red] at (6.35,0.9){$11A$};
\node[color=red] at (8.15,0.9){$23A$};
\node[color=red] at (9.8,0.9){$23B$};
\node[color=red] at (7.6,4.3){$5A$};
\node[color=red] at (9,4.4){$2B$};
\node[color=red] at (11,4.3){$3B$};
\node[color=red] at (3.5,4.3){$3A$};
\node[color=blue] at (5.5,5.55){$15A$};
\node[color=blue] at (5.5,2.6){$15B$};
\node[color=blue] at (4,6.15){$14A$};
\node[color=blue] at (4,2){$14B$};
\node[color=blue] at (9,6.15){$21A$};
\node[color=blue] at (9,2){$21B$};
\node[color=blue] at (3,3.8){$6A$};
\node[color=blue] at (4.75,3.8){$12A$};
\node[color=blue] at (8.15,3.8){$10A$};
\node[color=blue] at (9.8,3.8){$6B$};
\node[color=blue] at (11.6,3.8){$12B$};
%
%\draw[help lines,blue,line width=.6pt,step=1] (0,0) grid
%(\textwidth,7); % blue grid boxes; comment out when done
%
\end{tikzpicture}
\caption{The schematic $\X_{M_{24}}$ for the Mathieu group $M_{24}$
and spanning tree $\goth{T}$.}
\label{figure2}
\end{figure}

%This completes the picture at the vertex $2B$ and the others are similar.
%The schematic $\X_{M_{24}}$ is shown in Figure \ref{figure2}.

%We now apply Lemma \ref{lemma5} with $\goth{C}$ the single connected
%component of $\X_{M_{24}}$ and $K_1=2A,K_2=3A$ and \hl{$K_3=7?$},
%to give that the subcomplex of $X_{M_{24}}$ spanned by the vertices
%of $\goth{C}$ form a connected component. The clique complex
%$X_{M_{24}}$ is thus mono-connected. \hl{check}

Now let $\goth{T}$ be the spanning tree for $\X_{M_{24}}$
shown in Figure \ref{figure2}. This induces a tree $T$ in 
the clique complex $X_{M_{24}}$, isomorphic to $\goth{T}$,
and having vertices in the indicated conjugacy classes. 
The vertices $g_1,g_2,g_3$ that are in the conjugacy classes
$2B,3B$ and $4C$ have centralizers that generate $M_{24}$,
and so $X_{M_{24}}$ is mono-connected by 
Lemma \ref{lemma5}. In fact:

\begin{proposition}
  \label{proposition:sporadic:monoconnected}
The Mathieu groups $M_{11},M_{23}$ and $M_{24}$ are mono-connected.
\end{proposition}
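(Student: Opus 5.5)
We follow the template already used for $\Sym{9}$ and $M_{24}$, treating each of $M_{11},M_{23},M_{24}$ separately; the case $M_{24}$ has in effect been done in the discussion preceding the statement. For each group $G$ there are three steps. First, we read off from the $\ams{ATLAS}$ the conjugacy classes, the element orders and the centralizer orders. Second, using Lemma \ref{lemma2}(iii), we identify the classes of isolated vertices: these are the $p$-elements whose centralizer is a $p$-group. Third, we decompose a representative $g=g_0g_1$ of each edge class (all three groups are $1$-dimensional) via Lemma \ref{lemma1}. This gives the schematic $\X_G$. The aim is to check that the non-isolated vertex classes form a \emph{single} connected component $\goth{C}$ of $\X_G$.

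For $M_{11}$, of order $2^4\cdot3^2\cdot5\cdot11$, the classes are $2A,3A,4A,5A,6A,8A,8B,11A,11B$. The centralizer orders $16,8,8,11,11$ of $4A,8A,8B,11A,11B$ make these isolated vertices. The only edge class is $6A$, with $6A=3A\cdot 2A$, and $5A$ has centralizer of order $5$, so it is also isolated. Hence $\goth{C}$ is the single edge $2A$---$3A$.

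For $M_{23}$, the classes $4A,8A,11AB,23AB$ and $5A$ (centralizer orders $32,8,11,23,15$) need care. The class $5A$ is not isolated, because of $15AB$, and similarly $7AB$ are not isolated because of $14AB$. The edge classes are $6A,14AB,15AB$, and the schematic is connected on $2A,3A,5A,7A,7B$ via the edges $2A$---$3A$, $2A$---$7A$, $2A$---$7B$, $3A$---$5A$. In each case we choose a spanning tree $\goth{T}$ of $\goth{C}$ and lift it to a subtree $T$ of $X_G$, as explained before Lemma \ref{lemma5}.

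Next we apply Lemma \ref{lemma5}. We pick vertices $g_i$ of $T$, one in each class, and verify, for instance in {\sc Gap} with the standard generators, that their centralizers generate $G$. In $M_{11}$ the centralizers of an involution ($\mathrm{GL}_2(3)$, of order $48$) and of an adjacent element of order $3$ (order $18$) should generate $M_{11}$. Indeed, the subgroup they generate has order divisible by $144$. Among the maximal subgroups, only $M_{10}$ (order $720$) and $M_9{:}2$ (order $144$) have order divisible by $144$. The first contains no $\mathrm{GL}_2(3)$ of the right kind, and $\mathrm{GL}_2(3)$ does not embed in $M_9{:}2\cong 3^2{:}SD_{16}$, since that group has a normal Sylow $3$-subgroup whereas $\mathrm{GL}_2(3)$ has a non-normal one. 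This argument settles generation without computation. In $M_{23}$ we take the centralizer of the involution ($2^4{:}L_3(2)$, order $2688$) together with the centralizer of a $7$-element, or of a $3$-element, adjacent in $T$. The maximal subgroups of $M_{23}$ of order divisible by $2688\cdot 3$ are $M_{22}$ and $2^4{:}A_7$. The subgroup generated also has order divisible by $5$ and is not contained in $M_{22}$ once we include an element of $C_{5A}$ of order $15$. Lemma \ref{lemma5} then gives that the subcomplex spanned by the classes of $\goth{C}$ is connected, and every other vertex is isolated, so $G$ is mono-connected.

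The main obstacle is the generation step, together with getting the schematic exactly right: which classes are isolated and how the edge classes decompose. For the generation step I would first try the maximal-subgroup argument above. If it fails, I would fall back on a direct {\sc Gap} check that $\langle C_{g_1},\dots,C_{g_j}\rangle=G$ for the explicit lifted tree.
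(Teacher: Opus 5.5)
Your overall route is the paper's: find the isolated classes via Lemma \ref{lemma2}(iii), draw the schematic, lift a spanning tree of the non-isolated component to $X_G$, and apply Lemma \ref{lemma5}. The paper simply asserts that the centralizers generate, which amounts to a {\sc Gap} check. The only real departure is your computation-free maximal-subgroup argument for generation. For $M_{11}$ that argument is correct, and it can be shortened: $C_{2A}\cong\mathrm{GL}_2(3)$ is itself maximal in $M_{11}$ and $9\nmid 48$, so $\langle C_{2A},C_{3A}\rangle=M_{11}$. There is one harmless slip: $|C_{M_{11}}(4A)|=8$, not $16$.

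For $M_{23}$, however, the argument as written does not prove generation. First, only $C_{3A}$, of order $180=2^2\cdot 3^2\cdot 5$, supplies the extra factor $3$ and the $5$. $C_{7A}$ has order $14$ and adds nothing to $2688=2^7\cdot 3\cdot 7$, so the ``or of a $7$-element'' option does not give what you claim. Second, with $C_{2A}$ and $C_{3A}$ the generated subgroup has order divisible by $2^7\cdot 3^2\cdot 5\cdot 7=40320$. The maximal subgroups with that property are $M_{22}$, $L_3(4){:}2_2$ and $2^4{:}A_7$. You omit $L_3(4){:}2_2$, and you never exclude $2^4{:}A_7$. That is the one that actually matters: $C_{2A}\cong 2^4{:}L_3(2)$ lies in a conjugate of $2^4{:}A_7$, and divisibility by $5$ cannot rule it out.

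The repair uses your own ingredient against all three candidates. If $g_{2A},g_{3A}$ are adjacent in $T$, then $C_{g_{3A}}$ contains an element of order $15$. None of $M_{22}$, $L_3(4)$, $A_7$ has elements of order $15$. An element of order $15$ in $L_3(4){:}2_2$ would lie in $L_3(4)$, and one in $2^4{:}A_7$ would map injectively into $A_7$, so neither exists. Equivalently, the only maximal subgroups of $M_{23}$ containing elements of order $15$ are $A_8$ and $2^4{:}(3\times A_5){:}2$, and neither has order divisible by $2^7\cdot 7$. With this fix, or with the {\sc Gap} fallback you mention, your proof is complete, and it is more self-contained than the paper's bare assertion.
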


The analysis for all three is identical: the schematics are 
mono-connected, and the vertices of a spanning tree have centralizers
that generate. The Mathieu group $M_{12}$ is discussed in the next section. 
The non-isolated vertices in the schematic for $M_{22}$
form a single component, but the centralizers do not generate,
giving instead a subgroup of index $77$. The non-isolated vertices
of $X_{M_{22}}$ therefore have $1,7,11$ or $77$ components.

%%%%%%%%%%%%%%%%%%%%%%%%%%%%%%%%%%%%%%%%%%%%%%%%%%%%%

\subsection{Some more non mono-connected groups}
\label{section:clique:complex:twocomponents}

\begin{figure}
\begin{tikzpicture}
\draw [white] (0,0)--(\textwidth,3); % white line to determine figure
                               % height; leave
%
\node at (7.25,1.5) {\includegraphics[scale=1]{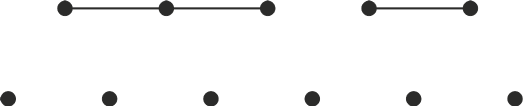}};
\node[color=red] at (5.6,1.8){$2A$};
\node[color=red] at (3.9,1.8){$3B$};
\node[color=red] at (7.35,1.8){$5A$};
\node[color=red] at (9.05,1.8){$2B$};
\node[color=red] at (10.8,1.8){$3A$};
\node[color=red] at (2.95,1.1){$4A$};
\node[color=red] at (4.65,1.1){$4B$};
\node[color=red] at (6.35,1.1){$8A$};
\node[color=red] at (8.1,1.1){$8B$};
\node[color=red] at (9.8,1.1){$11A$};
\node[color=red] at (11.55,1.1){$11B$};
\node[color=blue] at (4.75,2.5){$6A$};
\node[color=blue] at (6.5,2.5){$10A$};
\node[color=blue] at (9.9,2.5){$6B$};
%
%\draw[help lines,blue,line width=.6pt,step=1] (0,0) grid
%(\textwidth,3); % blue grid boxes; comment out when done
%
\end{tikzpicture}
\caption{The schematic $\X_{M_{12}}$ for the Mathieu group $M_{12}$.}
\label{figure2a}
\end{figure}

It is not hard to find examples of groups where the non-isolated 
vertices of the clique complex have precisely two connected components.
The Mathieu group $M_{12}$ has schematic as shown in 
Figure \ref{figure2a}; this is drawn in the same way as the schematics
for $M_{11},M_{23}$ and $M_{24}$ from the last section. One can find 
representatives $g_1,g_2,g_3$ for the conjugacy classes
$2A,3B,5A$ such that the $g_i$ span a connected tree in $X_{M_{12}}$
and the centralizers $C_{g_1},C_{g_2},C_{g_3}$ generate $M_{12}$.
Similarly, there are representatives $g_4,g_5$ for $2B,3A$ that span
an edge and whose centralizers generate. The non-isolated vertices 
of $X_{M_{12}}$ thus have two connected components with
$2^2\cdot 3^2\cdot 11+2^4\cdot 3\cdot 5\cdot 11+2^5\cdot 3^3\cdot 11$ 
and $3^2\cdot 5\cdot 11+2^5\cdot5\cdot 11$ vertices.

\vspace{1em}

One can also construct an infinite family of groups $G_p$,
for $p>3$ prime, with no isolated vertices and two connected
components, both of which have a number of vertices
that grows exponentially with $p$. 
Let $G_p=S_3\wr \Z_p$, the wreath product, with base
group $S_3\times \cdots \times S_3$ ($p$ times) 
and the cyclic group $\Z_p$ on top.
Then 
$$
|G_p| = 2^p \times 3^p \times p,
$$
so we have three primes to consider: $2$, $3$ and $p$.
Let $h$ be a generator of the cyclic group $\Z_p$ 
(written multiplicatively) and write a
general $g\in G_p$ as 
$(\sigma_1,\ldots,\sigma_p,h^i)$, with each $\sigma_j\in S_3$ and
$0\leq i\leq p-1$.
We single out three elements of $G_p$:
let $\sigma = (1,2)$ and $\tau = (1,2,3) \in S_3$, and set
$$
x = (1,\ldots,1,h); \quad s = (\sigma,\ldots,\sigma,1);
\quad t = (\tau,\ldots,\tau,1).
$$
Since the subgroup generated by $x$ has order $p$, it is a Sylow
$p$-subgroup, and so every $p$-element of $G_p$ has order $p$ and is
conjugate to some power of $x$.
Since the base group is normal in $G_p$, every $2$-element and every
$3$-element of $G_p$ lies in the base group.

The elements $s$ and $t$ generate a diagonal copy $D$ of $S_3$ in the
base group, and the centralizer of $x$ is the subgroup $D\times \Z_p$.
There are therefore $6^{p-1}(p-1)$ elements of order $p$.
The centralizer of $s$ is the subgroup $\langle \sigma \rangle \wr \Z_p$
of order $2^p\times p$, and $s$ has $3^p$ conjugates.
Similarly, the centralizer of $t$ is the subgroup
$\langle \tau\rangle \wr \Z_p$ of order $3^p \times p$ and $t$
has $2^p$ conjugates.
Some straightforward calculations show that all these elements
lie in a single component of the clique complex of $G_p$,
containing $6^{p-1}(p-1) + 3^p + 2^p$ vertices.

We are left with the $2$- and $3$-elements of $G_p$ which are not
conjugate to $s$ or $t$.
These elements all lie in the base group, and have the form
$(\sigma_1,\ldots,\sigma_p,1)$ with at least one $\sigma_i = 1$.
There are $4^p-3^p-1$ such elements of order $2$, and $3^p-2^p-1$
such elements of order $3$.
Since such an element commutes with at least one of the factors of
the base group, and since all the factors of the base group
commute with each other, we see quickly that they all lie in the
same connected component of the clique complex of $G_p$,
which this time contains $4^p-2^p - 2$ vertices.

So, here we have precisely two connected components,
both of which grow exponentially with $p$.
Essentially the same argument works for the groups $D_q\wr \Z_p$,
where $D_q$ is the dihedral group of order $2q$ and $q$ is an
odd prime different from $p$.

%%%%%%%%%%%%%%%%%%%%%%%%%%%%%%%%%%%%%%%%%%%%%%%%%%%%%
%%%%%%%%%%%%%%%%%%%%%%%%%%%%%%%%%%%%%%%%%%%%%%%%%%%%%

\section{(Combinatorial) sheaf homology}
\label{sheaves}

An early objective in the study of the Gruenberg-Kegel
graph $\Gamma_G$ was an understanding of its connectedness, and
the previous section devoted some energy to this question for
the clique complex $X_G$. The connectedness of a simplicial
complex $X$ is measured by the ordinary homology $H_0(X)$
in degree $0$, and so it is natural to study the
homology $H_*(X)$ more generally. 

In fact, we do more.
We equip the clique complex with a combinatorial sheaf
and study the resulting sheaf homology. 
While the ordinary homology
describes $X$ to some extent,
the sheaf homology often reveals more. 
%(I could give some examples here, but it is
%hard to do so without referencing my own work
%\cites{MR4492502,MR4401823,MR3276847},
%and then I look like a wanker. Wank away!)
In Section 
\ref{sheaves:fundamentals} we recall the
fundamentals of sheaf homology; standard references are
\cite{MR0210125}*{Appendix II} and \cite{MR2455920}.
Section 
\ref{sheaves:order:sheaves} describes the order sheaf on 
a clique complex and in Section \ref{sheaves:general:result}
we compute the sheaf homology of an arbitrary simplicial
complex with coefficients in an order sheaf. It turns
out that the result is
determined by the ordinary homology of the links 
of vertices. This is all interpreted for
clique complexes in Section \ref{sheaves:clique:complexes}.
One possibly surprising upshot is that the sheaf 
homology of $X_G$ can be computed with only partial
knowledge of the ordinary homology.

%%%%%%%%%%%%%%%%%%%%%%%%%%%%%%%%%%%%%%%%%%%%%%%%%%%%%

\subsection{Combinatorial sheaves and their homology}
\label{sheaves:fundamentals}

Let $R$ be a commutative ring with $1$ and $P$ a poset,
considered as a category with objects the elements of $P$
and a unique morphism $x\ra y$ if and only if $x\leq y$.
A \emph{sheaf\/} on $P$ is a \emph{contra\/}variant
functor $F:P\rightarrow\rmod$, the category of left $R$-modules.
If $x\leq y$ in $P$ then the $R$-module morphism
$F_x^y:F(y)\ra F(x)$ is called a
\emph{structure map\/} of the sheaf; 
when $x\leq y\leq z$ in $P$ the structure
maps satisfy $F_x^yF_y^z=F_x^z$, by functoriality.

A sheaf on a simplicial complex $X$ is a sheaf
on the poset $P_X$ of faces (under inclusion). 
Thus, a face $\ss$ is assigned an $R$-module
$F(\ss)$ and for $\tau\subseteq\ss$ there is a structure map
$F_{\tau}^{\ss}:F(\ss)\ra F(\tau)$. A triple of faces
$\tau\subseteq\ss\subseteq\mu$ gives 
structure maps that satisfy $F_\tau^\ss F_\ss^\mu=F_\tau^\mu$.

A morphism $\kappa:F\ra G$ of sheaves (either on a poset or a
simplicial complex) is a natural transformation of functors, i.e.
an $R$-module morphism $\kappa_\ss:F(\ss)\ra G(\ss)$ for each 
face $\ss$,
such that for any $\tau\subseteq
\ss$ we have
$G_\tau^\ss\kappa_\ss=\kappa_\tau F_\tau^\ss$. If each $\kappa_\ss$ is
an isomorphism then $\kappa$ is a sheaf isomorphism. 

Sheaves share many of the properties and
constructions of $R$-modules, and usually these
constructions can be performed ``locally". 
For example a sheaf morphism $\kappa:F\ra G$ has kernel
the sheaf whose value at a face $\ss$ is the kernel
of $\kappa_\ss$; similarly for images. A sequence
of sheaf morphisms $\cdots\ra F\ra G\ra\cdots$
is exact when the associated sequence
of module morphisms $\cdots\ra F_\ss\ra G_\ss\ra\cdots$
is exact for each face $\ss$.
The direct sum $\bigoplus_\lambda F_\lambda$
of sheaves has value at each $\ss$
the direct sum $\bigoplus_\lambda F_\lambda(\ss)$, and structure
maps the sums of the structure maps of the $F_\lambda$.

The colimit $\varinjlim F$
is the $R$-module obtained
by taking the quotient of $\bigoplus_{\ss\in X}F(\ss)$
by the submodule generated by all elements of the 
form $a_\ss-F^\ss_\tau(a_\ss)$ when $\tau\subseteq\ss$
and $a_\ss\in F(\ss)$. This is a right, but not left,
exact functor from sheaves on $X$ to $R$-modules,
and the left derived functors (or higher colimits)
evaluated at $F$ give the \emph{homology
$H_*(X;F)$ of $X$ with coefficients in the 
sheaf $F$}. (The \emph{co}homology is given by the 
right derived functors of the limit functor, but we
will have no need for cohomology in this paper.)

There is a chain complex that computes this
homology; it is essentially the simplicial
chain complex of $X$ suitably modified to
encompass the extra structure contained in the
sheaf.
Fix a partial ordering $\leq$ of the vertices of $X$ that has the property
that for any face $\ss$, the restriction of $\leq$ to $\ss$
is a total ordering. In particular, if
$\ss$ is a $i$-face then $\ss$ can be written
$\ss=x_0<x_1<\cdots< x_i$, or just $\ss=x_0x_1\ldots x_i$.
An $i$-face in this form is said to be \emph{oriented\/}.

The homology $H_*(X;F)$ is computed by the chain complex
$\Sym{*}(X;F)$ whose module of $i$-chains is:
$$
S_{\kern-1pt i}(X;F)=\bigoplus_\ss F(\ss),
$$
the direct sum over the oriented $i$-simplices $\ss$.
The differential $d:S_{\kern-1pt i}\ra S_{\kern-1pt{i-1}}$
is given by:
$$
\lambda\cdot\ss
\mapsto
\sum_{i=0}^{i}(-1)^i 
F_{d_i\ss}^{\ss}(\lambda)\cdot d_i\ss,
$$
where for $\ss=x_0x_1\ldots x_i$, the $i$-th face $d_i\ss$
is obtained by omitting $x_i$ from $\ss$, and the element
$\lambda\cdot\ss\in S_{\kern-1pt i}$ has $\lambda\in F(\ss)$
in the $\ss$-coordinate and $0$'s elsewhere. If $X=\varnothing$
then $S_*$ is the zero complex.

The result is a chain complex with homology $H_*(X;F)$, i.e.
$H S_*(X;F)=H_*(X;F)$.

We conclude this brief summary with the properties
of sheaf homology that we will directly use.
Firstly, a short exact sequence of sheaves
$0\ra H\ra F\ra G\ra 0$ on the simplicial
complex $X$
induces a long exact sequence in homology:
\begin{equation}
\label{equation:sheaves:LES}
    \cdots\ra
    H_{i+1}(X;G)\ra
    H_i(X;H)\ra H_i(X;F) \ra H_i(X;G)
    \ra H_{i-1}(X;H)
    \ra\cdots
\end{equation}
If $A$ is a fixed $R$-module then the
\emph{constant sheaf\/} $\Delta A$ on $X$
has value $A$ at every face $\ss$ and structure
maps that are the identity $A\ra A$. If $R=\Z$,
the situation in which all of our sheaves will
arise, then the sheaf homology $H_*(X;\Delta A)$
is just the ordinary simplicial homology
$H_*(X;A)$ of $X$ with coefficients in (the 
Abelian group) $A$. When $A=\Z$ we will just 
write $H_*X$ for the ordinary integral
simplicial homology. 

Finally, 
the complex 
$S_*(X;\bigoplus_\lambda F_\lambda)\cong\bigoplus_\lambda S_*(X;F_\lambda)$,
so that  
$H_*(X;\bigoplus_\lambda F_\lambda)
\cong \bigoplus_\lambda H_*(X;F_\lambda)$.

%%%%%%%%%%%%%%%%%%%%%%%%%%%%%%%%%%%%%%%%%%%%%%%%%%%%%

\subsection{Order sheaves}
\label{sheaves:order:sheaves}

In this
paper the sheaves have quite a simple structure.

If $X$ is a simplicial complex
then an \emph{order sheaf\/} $F$ on
$X$ has value at the vertex $x$ given by $F(x)=\Z_x:=\Z_{m(x)}$, 
where $m(x)\geq 2$ is an integer.
If $\ss=\{x_0,x_1,\ldots,x_i\}$ is an $i$-face,
then we require that
$m(x_j)$ and $m(x_k)$ are relatively prime for all $j\not = k$, and
that $F(\ss)=\Z_\ss:=\Z_{m(\ss)}$
where $m(\ss)=m(x_0)m(x_1)\ldots m(x_i)$. 
If $\tau\subseteq\ss$ is a face of $\ss$ then 
$m(\tau)=m(x_{i_0})m(x_{i_1})\ldots m(x_{i_\ell})$
for some $0\leq i_0<i_1<\cdots<i_\ell\leq i$. In particular,
$m(\tau)$ divides $m(\ss)$, and the structure
map $F_\tau^\ss:\Z_{m(\ss)}\ra\Z_{m(\tau)}$ is given by reduction modulo
$m(\tau)$. We leave it to the reader to check that if 
$\tau\subseteq\ss\subseteq\mu$ are faces then 
$F^\mu_\tau=F^\ss_\tau F^\mu_\ss$, and so $F$ is indeed a sheaf.

The orders of group elements naturally lead to an order
sheaf on the clique complex $X_G$: the value $F(g)$ for a face
$g\in X_G$ is $\Z_{|g|}$, where $|g|$ is the order of $g$.
We will just write $\Z_g$ from now on. 
In particular, if $g$ is a vertex of $X_G$ 
--- hence a $p$-element for some prime $p$ --- then $F(g)=\Z_{q}$ 
where $q=p^m$, and if $g=g_0g_1\ldots g_i$ is an $i$-face then
$F(g)=\Z_{|g|}$ where $|g|=q_0q_1\ldots q_i$
with the $q_j$ powers of distinct primes.
If $h$ is a face of $g$ in $X_G$ then $h=g_{i_0}g_{i_1}\ldots g_{i_\ell}$
for some $0\leq i_0<i_1<\cdots<i_\ell\leq i$,
so that 
$|h|=q_{i_0}q_{i_1}\ldots q_{i_\ell}$
divides $|g|$. The structure map
$F_h^g:\Z_{g}\ra\Z_{h}$ of the sheaf is then 
reduction modulo $|h|$. 

If $F$ is an order sheaf on a simplicial complex $X$ and $x\in X_0$
is a vertex then the \emph{vertex sheaf\/} $F_x$ is defined by:
\begin{equation}
  \label{vertex:sheaf:equation8457}
F_x(\ss)=
  \left\{
\begin{array}{ll}
\Z_x,  & x\in\ss,\\
0,  &\text{else}.\\
\end{array}
\right.
\end{equation}
Thus, $F_x$ is non-zero on $\St_x\setminus\Lk_x$,
where it has the same value that $F$ has at the vertex $x$.
The structure maps are the identity or zero maps
as appropriate.

\begin{proposition}
  \label{proposition:q097h4}
  Let $X$ be a simplicial complex with vertices $X_0$
  and $F$ an order sheaf on $X$.
  Then there is an isomorphism of sheaves 
  $F\cong\bigoplus_{x\in X_0} F_x$.
\end{proposition}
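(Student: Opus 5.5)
The plan is to write down an explicit sheaf morphism $\kappa:F\ra\bigoplus_{x\in X_0}F_x$ and check that it is an isomorphism face by face. The key input is the Chinese remainder theorem.

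First I fix the components of $\kappa$. At a face $\ss=\{x_0,\ldots,x_i\}$ we have
$$\Bigl(\bigoplus_{x\in X_0}F_x\Bigr)(\ss)=\bigoplus_{x\in\ss}\Z_x=\Z_{m(x_0)}\oplus\cdots\oplus\Z_{m(x_i)},$$
since $F_x(\ss)=0$ unless $x\in\ss$. I define $\kappa_\ss:\Z_{m(\ss)}\ra\bigoplus_{x\in\ss}\Z_{m(x)}$ by $a\mapsto(a\bmod m(x_0),\ldots,a\bmod m(x_i))$. The definition of an order sheaf requires the $m(x_j)$ to be pairwise coprime with product $m(\ss)$. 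The Chinese remainder theorem therefore makes each $\kappa_\ss$ an isomorphism of abelian groups ($R=\Z$). The empty face, if it is counted at all, has value $0$ on both sides.

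Next I check naturality. Take $\tau\subseteq\ss$. The structure map of $\bigoplus_x F_x$ from $\ss$ to $\tau$ is the sum of the $(F_x)^\ss_\tau$. Each of these is the identity on $\Z_x$ when $x\in\tau$ and zero otherwise, which is forced because then $F_x(\tau)=0$. So the structure map is the coordinate projection $\bigoplus_{x\in\ss}\Z_x\ra\bigoplus_{x\in\tau}\Z_x$. I then need $\mathrm{proj}\circ\kappa_\ss=\kappa_\tau\circ F^\ss_\tau$. Both sides send $a\in\Z_{m(\ss)}$ to $(a\bmod m(x))_{x\in\tau}$, because $m(x)$ divides $m(\tau)$, which divides $m(\ss)$, so reducing modulo $m(\tau)$ and then modulo $m(x)$ agrees with reducing modulo $m(x)$ directly. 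This settles naturality.

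Finally, I must confirm that each vertex sheaf $F_x$ really is a sheaf, i.e.\ that its structure maps compose correctly. This holds because $\St_x\setminus\Lk_x$ is upward closed: if $x\in\tau\subseteq\ss$, then $x\in\ss$. Consequently, whenever a map is nonzero, every intermediate face also contains $x$, and composites of identities and zero maps are consistent. A natural transformation whose components are all isomorphisms is a sheaf isomorphism, which gives $F\cong\bigoplus_{x\in X_0}F_x$. No step is a serious obstacle; the only point needing care is correctly identifying the structure maps of the direct sum as coordinate projections.
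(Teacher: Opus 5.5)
Your proof is correct and follows essentially the same route as the paper. Both define $\kappa_\sigma$ as the Chinese-remainder isomorphism $\Z_{m(\sigma)}\to\bigoplus_{x\in\sigma}\Z_{m(x)}$ and check naturality against the coordinate-projection structure maps of $\bigoplus_x F_x$. The differences are harmless refinements: the paper checks naturality only on the generator $1\mapsto(1,\ldots,1)$, whereas you check it on arbitrary $a$ and also verify that each $F_x$ is a sheaf.
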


\begin{proof}
Let $G=\bigoplus_{x\in X_0} F_x$ and let 
 $\ss=\{x_0,x_1,\ldots,x_i\}$ be an $i$-face of $X$.
Then:
\begin{equation}
\label{eq:sumdecomp}
F(\ss) 
= \Z_\ss 
= \Z_{m(x_0)\cdots m(x_i)}
\cong \bigoplus_{j=0}^i \Z_{x_j} 
= \bigoplus_{j=0}^i F_{x_j}(\ss) 
= G(\ss),
\end{equation}
as the $m(x_0),\ldots, m(x_i)$ are mutually relatively prime.
Define $\kappa_\ss:F(\ss)\ra G(\ss)$ to be the isomorphism
(\ref{eq:sumdecomp}). To see that the $\kappa_\ss$ assemble into a
sheaf isomorphism $\kappa:F\ra G$, it remains to show that
for any pair of faces $\tau\subseteq\ss$, we have
$G^\ss_\tau \kappa_\ss=\kappa_\tau F^\ss_\tau$.
Suppose then that 
$\tau = \{y_{0},\ldots,y_{\ell}\}$ is some face of $\ss$, and let
$m(\tau) =m(y_{0})\cdots m(y_{\ell})$ be the corresponding divisor of $m(\ss)$.
As both $G^\ss_\tau \kappa_\ss$ and $\kappa_\tau F^\ss_\tau$
are homomorphisms, their equality need only be checked on the
generator $1$ of $F(\ss) = \Z_{m(\ss)}$.
On the one hand, we have:
$
\kappa_\tau F^\ss_\tau(1) = \kappa_{\tau}(1) = (1,\ldots,1)\in
\bigoplus_{j=0}^\ell \Z_{y_j}, 
$
where there are $\ell+1$ coordinates in the tuple
corresponding to the $\ell+1$
summands of $G(\tau)$.
On the other hand, we have
$\kappa_\ss(1) = (1,\ldots,1)\in \bigoplus_{j=0}^i \Z_{x_j}$, where we
now have $i+1$ coordinates corresponding to the $i+1$ summands of $G(\ss)$. 
Since this element has $1$ in every coordinate, its image under
$G^\ss_\tau$ also has $1$ in every coordinate.
\qed
\end{proof}

\begin{corollary}
\label{corollary:89374fhytqb}
If $F$ is an order sheaf on a simplicial complex $X$ 
with vertices $X_0$, then:
\begin{equation}
\label{equationq90htb7j}
H_*(X;F)\cong
\bigoplus_{x\in X_0}H_*(\St_x;F_x).
\end{equation}
\end{corollary}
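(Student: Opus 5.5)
We combine Proposition \ref{proposition:q097h4} with the additivity of sheaf homology noted at the end of Section \ref{sheaves:fundamentals}, and then localize each summand to the star of its vertex. The proposition gives $F\cong\bigoplus_{x\in X_0}F_x$. Isomorphic sheaves have isomorphic chain complexes $S_*(X;-)$, and homology commutes with direct sums of sheaves. Hence
$$
H_*(X;F)\cong\bigoplus_{x\in X_0}H_*(X;F_x).
$$
It remains to show that $H_*(X;F_x)\cong H_*(\St_x;F_x)$ for each vertex $x$, where on the right $F_x$ denotes the restriction of the vertex sheaf to the subcomplex $\St_x$.

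For this we compare chain complexes directly. The module $S_i(X;F_x)=\bigoplus_\ss F_x(\ss)$ has non-zero summands only for the oriented $i$-faces $\ss$ containing $x$. Every such face satisfies $\ss\cup\{x\}=\ss\in X$, so it is a face of $\St_x$. Therefore the inclusion $\St_x\subseteq X$ induces an inclusion of chain modules $S_i(\St_x;F_x|_{\St_x})\hookrightarrow S_i(X;F_x)$, and this map is an isomorphism in each degree. Any partial order on $X_0$ that is total on faces restricts to one on $\St_x$, so orientations are compatible.

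Next we check that the differentials agree. For $\lambda\cdot\ss$ with $x\in\ss$, the differential in $S_*(X;F_x)$ is $\sum_j(-1)^j(F_x)^{\ss}_{d_j\ss}(\lambda)\cdot d_j\ss$. Each $d_j\ss$ is a face of $\ss$, hence lies in $\St_x$. The structure maps are the same whether computed in $X$ or in $\St_x$: they are the identity when $x\in d_j\ss$ and zero otherwise. So the inclusion is an isomorphism of chain complexes, and passing to homology gives $H_*(X;F_x)\cong H_*(\St_x;F_x)$. Summing over $x\in X_0$ yields (\ref{equationq90htb7j}).

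There is no real obstacle; the argument is bookkeeping. The only point needing care is that faces of $X$ outside $\St_x$ contribute zero summands to $S_*(X;F_x)$, rather than merely vanishing in homology. This holds because $F_x(\ss)=0$ whenever $x\notin\ss$, and every face containing $x$ already lies in $\St_x$.
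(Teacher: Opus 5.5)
Your proposal is correct and is essentially the paper's own argument: decompose $F\cong\bigoplus_{x\in X_0}F_x$ via Proposition~\ref{proposition:q097h4}, use additivity of sheaf homology, and observe that $S_*(X;F_x)=S_*(\St_x;F_x)$ because $F_x$ vanishes on every face not containing $x$. You spell out in more detail (orientations, agreement of differentials and structure maps) the chain-level identification that the paper states in one line.
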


This follows because
$H_*(X;{\textstyle{\bigoplus_{x\in X_0}}}F_x)
\cong\bigoplus_{x\in X_0}H_*(X;F_x)$ and
$S_*(X;F_x)=S_*(\St_x;F_x)$, as the vertex sheaf $F_x$ 
has value $0$
outside of the star $\St_x$.

%%%%%%%%%%%%%%%%%%%%%%%%%%%%%%%%%%%%%%%%%%%%%%%%%%%%%

\subsection{The homology in general}
\label{sheaves:general:result}

The sheaf homology of an arbitrary simplicial complex
equipped with an order sheaf can be computed in terms of the
\emph{ordinary\/} homology of the links of vertices. 

\begin{theorem}
  \label{clique:complex:theoremeuyrh9erw}
Let $X$ be a %finite $d$-dimensional 
simplicial complex with vertices
$X_0$, and $F$ an order 
sheaf on $X$. Then for $i\geq 2$ the homology:
$$
H_i(X;F) \cong \bigoplus_{x\in X_0}
H_{i-1}(\Lk_x,\Z_x).
$$
%where $d_x$ is the dimension of the star $\St_x$.
In degree $1$
we have
$H_1(X;F) \cong \bigoplus \Z_x^{m_x-1}$, the sum over the
vertices of $X$, and
where  $m_x$ is the number of connected components of the link
$\Lk_x$ of $x$. In degree $0$ we have
$H_0(X;F) \cong \bigoplus \Z_x$, the sum over the
isolated vertices.
The homology vanishes in all other degrees.
\end{theorem}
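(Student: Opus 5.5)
By Corollary \ref{corollary:89374fhytqb} it suffices to compute, for each vertex $x$, the homology $H_*(\St_x;F_x)$ and then sum over $x\in X_0$. The plan is to realise $F_x$ on $\St_x$ as a quotient of constant sheaves and read off its homology from a long exact sequence.

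On $\St_x$ there is a short exact sequence of sheaves
$$
0\ra G_x\ra \Delta\Z_x\ra F_x\ra 0,
$$
where $\Delta\Z_x$ is the constant sheaf on $\St_x$ with value $\Z_x$. The map $\Delta\Z_x\ra F_x$ is the identity on faces containing $x$ and zero elsewhere. One checks that it is a natural transformation: if $\tau\subseteq\ss$ with $x\in\ss$ but $x\notin\tau$, then the structure map of $F_x$ is zero, so the square commutes. The kernel $G_x$ is the constant sheaf $\Z_x$ supported on the faces not containing $x$, that is, on $\Lk_x$. This is a sheaf because $\Lk_x$ is closed under taking faces. Moreover $S_*(\St_x;G_x)=S_*(\Lk_x;\Delta\Z_x)$, so $H_*(\St_x;G_x)=H_*(\Lk_x;\Z_x)$, the ordinary homology with coefficients in $\Z_x$. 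Likewise $H_*(\St_x;\Delta\Z_x)=H_*(\St_x;\Z_x)$, which is $\Z_x$ in degree $0$ and zero otherwise, since $\St_x$ is contractible.

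Now apply the long exact sequence (\ref{equation:sheaves:LES}). For $i\geq 2$, both $H_i(\St_x;\Z_x)$ and $H_{i-1}(\St_x;\Z_x)$ vanish, so the connecting map gives $H_i(\St_x;F_x)\cong H_{i-1}(\Lk_x;\Z_x)$. Summing over $x$ yields the statement for $i\geq 2$. The low degrees are handled by the tail of the sequence:
$$
0\ra H_1(\St_x;F_x)\ra H_0(\Lk_x;\Z_x)\ra H_0(\St_x;\Z_x)\ra H_0(\St_x;F_x)\ra 0 .
$$
Here $H_0(\Lk_x;\Z_x)=\Z_x^{m_x}$, and the middle map is the map induced by the inclusion $\Lk_x\hookrightarrow\St_x$. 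That map sends each component generator to the generator of $\Z_x$.

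The main care is needed at this step, and it splits into two cases. If $\Lk_x\neq\varnothing$, the middle map is surjective with kernel $\Z_x^{m_x-1}$, so $H_1(\St_x;F_x)\cong\Z_x^{m_x-1}$ and $H_0(\St_x;F_x)=0$. If $\Lk_x=\varnothing$, then $x$ is isolated, $m_x=0$, and we get $H_0=\Z_x$ and $H_1=0$. I would adopt the convention that $\Z_x^{m_x-1}$ is read as $0$ in this case, or equivalently sum only over non-isolated vertices in degree $1$.

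Negative degrees vanish trivially. This accounts for all the claimed isomorphisms. It remains only to check the naturality and exactness of the short exact sequence face by face, which is routine.
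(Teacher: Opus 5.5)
Your proposal is correct and follows the paper's proof essentially step for step. Both reduce to stars via the vertex-sheaf decomposition, use the short exact sequence $0\to K\to\Delta\Z_x\to F_x\to 0$ with kernel supported on $\Lk_x$, and read off the answer from the long exact sequence using contractibility of $\St_x$.

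The differences are cosmetic. For non-isolated $x$, the paper kills $H_0(\St_x;F_x)$ by checking directly that the differential $S_1(\St_x;F_x)\to S_0(\St_x;F_x)=\Z_x$ is onto, whereas you identify the map $H_0(\Lk_x;\Z_x)\to\Z_x$ as the split sum map. For isolated $x$, the paper computes the chain complex directly and declares the empty complex to have one component, which plays the role of your $m_x=0$ convention.
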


Note that the number of connected components of the empty
set is $1$, and so isolated vertices contribute nothing to the 
homology in degrees $>0$.

\begin{proof}
  By Corollary \ref{corollary:89374fhytqb}
  it suffices to compute the homology $H_*(\St_x,F_x)$ for
  each vertex $x$. Let $\Delta\Z_x$ be the constant sheaf
  on the star $\St_x$ and consider the map
  $\kappa:\Delta\Z_x\ra F_x$, where for any $\ss\in\St_x$,
  the map $\kappa_\ss$ is either the identity $\Z_x\ra\Z_x$
  or $\Z_x\ra 0$ as appropriate. Then
  $(F_x)^\ss_\tau\,\kappa_\ss=\kappa_\tau\,(\Delta\Z)^\ss_\tau$ for
  any $\tau\subseteq\ss$ in $\St_x$: there are three cases to check,
  depending on whether $x\in\tau$ or $x\in\ss,x\not\in\tau$
  or $x\not\in\ss$. Thus $\kappa$ is a map of sheaves on $\St_x$.
  Moreover, each $\kappa_\ss$ is a surjection,
  and so $\kappa$ is a surjective sheaf map.
  
  There is thus a short exact sequence
  of sheaves on $\St_x$:
  \begin{equation}
    \label{SES:equation:q08497h57}
  0\ra K\ra \Delta\Z_x\ra F_x\ra 0,
  \end{equation}
  where $K$, the kernel sheaf, has values:
  \begin{equation*}
K(\ss)=
  \left\{
\begin{array}{ll}
\Z_x,  & x\in\Lk_x,\\
0,  &\text{else},\\
\end{array}
\right.
\end{equation*}
  with structure maps $0\ra 0, 0\ra\Z_x$ or $\Z_x\ra\Z_x$.
  In particular, $H_*(\St_x;K)\cong H_*(\Lk_x,\Z_x)$, where now
  the latter homology has constant coefficients $\Z_x$.

  The result is that the short exact sequence
  (\ref{SES:equation:q08497h57}) induces a long exact sequence in
  homology:
  \begin{equation*}
    \label{LES:equation:qs23drhe9g4}
    \cdots\ra
    H_{i+1}(\St_x;F_x)\ra
    H_i(\Lk_x,\Z_x)\ra H_i(\St_x,\Z_x) \ra H_i(\St_x;F_x)
    \ra H_{i-1}(\Lk_x,\Z_x)
    \ra\cdots
  \end{equation*}
  for $0\leq i\leq d_x$, where $d_x$ is the dimension of the
  star $\St_x$. As $\St_x$ is contractible (and in particular,
  $H_*\St_x$ is a free $\Z$-module)
  and the middle terms
  have constant coefficients $\Z_x$, we have $H_i(\St_x,\Z_x)=0$
  when $i>0$ and $H_0(\St_x,\Z_x)\cong\Z_x$. The long exact
  sequence thus decomposes
  into fragments that give $H_i(\St_x;F_x)\cong H_{i-1}(\Lk_x,\Z_x)$
  when $2\leq i\leq d_x$, as well as an exact sequence:
  \begin{equation}
    \label{ES:equation:9w6rj7}
  0\ra H_1(\St_x;F_x)\ra H_{0}(\Lk_x,\Z_x)\ra\Z_x\ra H_0(\St_x,;F_x)\ra 0.
  \end{equation}
  If $m_x$ is the number of connected components of the link $\Lk_x$
  then $H_{0}(\Lk_x,\Z_x)\cong\Z_x^{m_x}$, as $H_0(\Lk_x)$ is free.

  Suppose now that $x$ is not an isolated vertex. Then as the vertex sheaf
  $F_x$ has value $0$ on any vertex of $\St_x$ that is not $x$, 
  the $0$-chains $\Sym{0}(\St_x;F_x)=\Z_x=F_x(x)$. Let $x_0x_1$ be an edge
  of $\St_x$, with $x$ one of $x_0$ or $x_1$, and let 
  $\lambda\in\Z_x=F_x(x)$. Then the element of $\Sym{1}(\St_x;F_x)$,
  having value $\lambda$ in the component indexed by the edge $x_0x_1$
  and $0$ in all other components, maps via the differential
  $\Sym{1}(\St_x;F_x)\ra \Sym{0}(\St_x;F_x)$ to $\pm\lambda$.
  The differential is thus onto and so $H_0(\St_x;F_x)=0$.
  The exact sequence (\ref{ES:equation:9w6rj7}) is then short:
  $$
  0\ra H_1(\St_x;F_x)\ra \Z_x^{m_x}\ra\Z_x\ra 0,
  $$
  from which we can deduce that $H_1(\St_x;F_x)\cong\Z_x^{m_x-1}$.

  If on the other hand $x$ is an isolated vertex, then the
  complex $\Sym{*}(\St_x;F_x)$ is non-zero only in degree $0$,
  where it is
  $F_x(x)=\Z_x$; the homology $H_i(\St_x;F_x)$
  is thus $\Z_x$ in degree $0$ and vanishes in all other degrees. 
  Note that both $\Z_x^{m_x-1}$ and $H_*(\Lk_x;F_x)$ vanish
  for an isolated vertex.
  
  Assembling (\ref{equationq90htb7j}) then gives the advertised result. 
\qed  
\end{proof}

If $X$ is $d$-dimensional for some $d$, then $H_i(X;F)$ vanishes 
when $i>d$.

%%%%%%%%%%%%%%%%%%%%%%%%%%%%%%%%%%%%%%%%%%%%%%%%%%%%%

\subsection{The homology of the clique complex equipped 
with the order sheaf}
\label{sheaves:clique:complexes}

Theorem \ref{clique:complex:theoremeuyrh9erw} can be
interpreted for a clique complex equipped with the order sheaf.
Recall, for $g\in G$ a $p$-element, that $X_{C_g}^p$ is
the $p$-restricted clique complex of the centralizer $C_g$ of $g$:

\begin{corollary}
  \label{corollary:clique:complex:general}
  Let $G$ be a finite group and $F$ the order sheaf on the
  clique complex $X_G$. Then
\begin{enumerate}
    \item In degrees $2\leq i\leq \dim G$ 
    the homology:
$$
H_i(X_G;F) \cong \bigoplus_{g}
H_{i-1}(X_{C_g}^p,\Z_g),
$$
the sum over the $g\in G$ that are $p$-elements
for some prime $p$.
    \item In degree $1$ the homology 
    $H_1(X_G;F)\cong\bigoplus \Z_g^{m_g-1}$,
    the sum over the $g\in G$ that are $p$-elements
for some prime $p$ and where $m_g$ is the number of 
connected components of $X_{C_g}^p$.
    \item In degree $0$ the homology $H_0(X_G;F)\cong\bigoplus\Z_g$,
    the sum over the $g\in G$ that are $p$-elements
for some prime $p$ and that have centralizer $C_g$ a $p$-group.
\end{enumerate}
The homology vanishes in all other 
degrees.
\end{corollary}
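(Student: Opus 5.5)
The plan is to specialise Theorem \ref{clique:complex:theoremeuyrh9erw} to $X=X_G$ equipped with the order sheaf $F$, and then translate each ingredient into group-theoretic language. The first step is to check that the order sheaf on $X_G$ really is an order sheaf in the sense of Section \ref{sheaves:order:sheaves}. At a vertex $g$ (a $p$-element) we have $m(g)=|g|=p^m\geq 2$. For a face $g=g_0g_1\ldots g_i$ the orders $|g_j|$ are powers of distinct primes, so they are pairwise coprime. Their product is $|g|$, because the $g_j$ commute and have coprime orders. The structure maps are reduction modulo the order of the subface. So Theorem \ref{clique:complex:theoremeuyrh9erw} applies directly, with the sum running over the vertex set of $X_G$, which is exactly the set of nontrivial $p$-elements for the various primes $p$.

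The second step is to identify the links. By Lemma \ref{lemma:links}, for a $p$-element $g$ we have $\Lk_g=X^p_{C_g}$, and the coefficient module is $\Z_x=\Z_g$. With this substitution, the degree $\geq 2$ statement of Theorem \ref{clique:complex:theoremeuyrh9erw} becomes item 1. The degree $1$ statement becomes item 2, with $m_g$ the number of connected components of $X^p_{C_g}$. One convention needs checking here: when $X^p_{C_g}$ is empty it counts as having one component. This is consistent with the note after the theorem, so isolated vertices contribute $\Z_g^0=0$.

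For degree $0$, the theorem gives a sum over the isolated vertices. By Lemma \ref{lemma2}(iii), these are precisely the $p$-elements $g$ whose centralizer $C_g$ is a $p$-group, which gives item 3.

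Finally, I need the range and the vanishing statement. Theorem \ref{clique:complex:theoremeuyrh9erw} already gives vanishing in negative degrees. For $i>\dim G$, the homology $H_{i-1}(\Lk_g,\Z_g)$ vanishes because $\Lk_g$ has dimension at most $\dim X_G-1=\dim G-1$. Alternatively, one can use the remark after the theorem that $H_i(X;F)=0$ for $i>\dim X$. So restricting item 1 to $2\leq i\leq\dim G$ and declaring vanishing elsewhere is correct.

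The main (mild) obstacle is only bookkeeping: making sure the indexing set ``$p$-elements for some prime $p$'' excludes the identity, so that it matches $X_0$, and that the empty-link convention is handled. Everything else is a direct substitution into results already established.
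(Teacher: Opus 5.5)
Your proposal is correct and follows the paper's route. The paper obtains the corollary as a direct translation of Theorem~\ref{clique:complex:theoremeuyrh9erw} via $\Lk_g=X^p_{C_g}$ (Lemma~\ref{lemma:links}), with the isolated vertices read off from Lemma~\ref{lemma2}(iii); your additional checks (the order-sheaf axioms for $X_G$, the empty-link convention, and vanishing above $\dim G$ because $\dim\Lk_g\leq\dim G-1$) just make explicit what the paper calls a ``straight translation''.
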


The result is a straight translation of 
Theorem \ref{clique:complex:theoremeuyrh9erw},
combined with Lemma \ref{lemma:links}.
The corollary can be implemented when the centralizers of elements 
have clique complexes that are well understood --- see 
Section \ref{section:examples} for some examples of this. 
In the meantime, it is also possible to describe the sheaf homology
in low dimensions:

\begin{corollary}
    \label{corollary:clique:complex:dimension0}
Let $G$ be a group of dimension $0$, i.e. an
EPPO-group, and let
$F$ be the order sheaf on $X_G$. Then:
$$
H_0(X_G;F) \cong \bigoplus_{g}
\Z_g,
$$
the sum over the elements $g\in G$, and $H_i(X_G;F)=0$
when $i\not=0$.
\end{corollary}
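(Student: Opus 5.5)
Since $G$ is an EPPO-group, Lemma \ref{lemma4}(i) says that $X_G$ is $0$-dimensional. Every non-identity element of $G$ is then a vertex, and in fact an isolated vertex, since it is not properly contained in any face. The plan is simply to specialise Theorem \ref{clique:complex:theoremeuyrh9erw} (equivalently Corollary \ref{corollary:clique:complex:general}) to this case.

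First, the degree $0$ statement. By the theorem, $H_0(X_G;F)\cong\bigoplus\Z_g$, the sum over the isolated vertices $g$, and here that means all the non-identity elements of $G$. We can check this against Corollary \ref{corollary:clique:complex:general}(3) as well. Each $g\ne 1$ is a $p$-element for some $p$, and its centralizer must be a $p$-group by Lemma \ref{lemma2}(iii): if a prime $q\neq p$ divided $|C_g|$, then $g$ times a commuting $q$-element would have order divisible by two distinct primes, which is impossible in an EPPO-group. The sum is therefore over all non-identity elements. The identity contributes $\Z_1=0$, so we may harmlessly write the sum over all $g\in G$.

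Second, the vanishing in positive degrees. Every link $\Lk_g$ is empty, so it has $m_g=1$ connected component (by the convention noted after the theorem), and $\Z_g^{m_g-1}=0$ gives $H_1=0$. For $i\geq 2$ we have $H_{i-1}(\varnothing,\Z_g)=0$. Alternatively, the chain complex $S_*(X_G;F)$ is concentrated in degree $0$ with zero differential. This gives $H_0=\bigoplus_g\Z_g$ and $H_i=0$ for $i\neq 0$ directly, which is probably the cleanest route to present.

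No step is a real obstacle. The only points needing care are the convention about the identity element and the remark that the empty link has one component.
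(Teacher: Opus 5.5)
Your proposal is correct and takes essentially the paper's route: the paper says only that the result "follows immediately" from Theorem \ref{clique:complex:theoremeuyrh9erw} (via Corollary \ref{corollary:clique:complex:general}). Like you, it relies on every non-identity element of an EPPO-group being an isolated vertex with empty link. Your remark that $S_*(X_G;F)$ is concentrated in degree $0$ with zero differential is a valid and even more direct alternative, and reading the identity's contribution as $\Z_1=0$ is the right way to interpret the sum over all $g\in G$.
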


The result follows immediately, as does:

\begin{corollary}
    \label{corollary:clique:complex:dimension1}
Let $G$ be a group of dimension $1$ and let
$F$ the order sheaf on $X_G$. Then:
$$
H_1(X_G;F) \cong \bigoplus_{g}
\Z_g^{n_g-1},
$$
the sum over the $g\in G$ that are $p$-elements
for some prime $p$ and whose 
centralizer is not a $p$-group.
%and for which the set 
%of $p^\prime$-elements
%$h$ such that $p^\prime$ is a prime $\not=p$ and $hg=gh$ is 
%non-empty. 
In this case $n_g$ is the number
of $p^\prime$-elements
$h$ such that $p^\prime$ is a prime $\not=p$ and $hg=gh$.
%and where $n_g$ is the number of $p^\prime$-elements
%$h$ such that $p^\prime$ is a prime $\not=p$ and $hg=gh$.
In degree $0$ we have $H_0(X_G;F)\cong\bigoplus\Z_g$, the sum
over the $g\in G$ that are $p$-elements
for some prime $p$ and have centralizer a $p$-group.
The homology $H_i(X_G;F)=0$
when $i\not=0,1$.
\end{corollary}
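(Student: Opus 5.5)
This is a direct specialization of Corollary~\ref{corollary:clique:complex:general} (equivalently Theorem~\ref{clique:complex:theoremeuyrh9erw} together with Lemma~\ref{lemma:links}) to the case $\dim G=1$. The plan is to treat each degree in turn, using only the description of the links of vertices.

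First, since $X_G$ is $1$-dimensional, the sheaf chain complex $S_*(X_G;F)$ vanishes above degree $1$. Hence $H_i(X_G;F)=0$ for $i\ge 2$, and trivially for $i<0$. The same conclusion can be read off the degree $i\ge 2$ part of Corollary~\ref{corollary:clique:complex:general}. There $H_{i-1}(X^p_{C_g},\Z_g)=0$ because each link $\Lk_g=X^p_{C_g}$ has dimension at most $\dim X_G-1=0$, so it has no homology in degree $i-1\ge 1$.

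Degree $0$ is exactly part (3) of Corollary~\ref{corollary:clique:complex:general}. Equivalently, it is the isolated-vertex criterion of Lemma~\ref{lemma2}(iii) fed into the degree $0$ statement of Theorem~\ref{clique:complex:theoremeuyrh9erw}.

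The only real content is degree $1$. Let $g$ be a $p$-element. By Lemma~\ref{lemma:links}, $\Lk_g=X^p_{C_g}$, and since $X_G$ is $1$-dimensional this link is $0$-dimensional. So I first describe its vertices. A vertex $h$ lies in $\Lk_g$ exactly when $gh$ is an edge of $X_G$, that is, when $h$ is a $p'$-element for some prime $p'\ne p$ and $hg=gh$. A $0$-dimensional complex has one connected component per vertex, so $m_g=n_g$, the number of such $h$.

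Next I check that the vertices excluded from the sum in the statement contribute nothing. These are the $p$-elements whose centralizer is a $p$-group. For such $g$ the link is empty, and by the convention noted after Theorem~\ref{clique:complex:theoremeuyrh9erw} we take $m_g=1$, so $\Z_g^{m_g-1}=0$. Conversely, if $C_g$ is not a $p$-group, then by Cauchy's theorem it contains an element of some prime order $p'\ne p$. That element is a vertex of the link, so $n_g\ge1$ and the count $n_g$ is the correct exponent. Substituting into part (2) of Corollary~\ref{corollary:clique:complex:general} gives the stated formula.

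The main obstacle is only bookkeeping. The empty link has $m_g=1$ by convention, whereas the raw vertex count would give $n_g=0$, so restricting the sum to non-isolated vertices is what keeps the formula consistent. Beyond this, no further argument should be needed.
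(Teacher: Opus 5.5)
Your proposal is correct and follows the paper's route. The paper simply states that this corollary follows immediately from Corollary~\ref{corollary:clique:complex:general} (that is, Theorem~\ref{clique:complex:theoremeuyrh9erw} combined with Lemma~\ref{lemma:links}), and your bookkeeping supplies the details: the link of $g$ is $0$-dimensional with $m_g=n_g$, while the empty-link convention $m_g=1$ covers the $p$-elements whose centralizer is a $p$-group.
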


%The $p$-restricted clique complex of the centralizer %$X_{C_g}^p$
%of $g$ is empty if and only if the centralizer %$C_g$ 
%is a $p$-group. 
%In this case $m_g-1=0$ but $n_g-1=-1$; this is the reason for
%the extra restriction on the type of $g$ that contribute
%to $H_1(X_G;F)$ in Corollary \ref{corollary:clique:complex:dimension1}.

%In general the number $m_g$ of connected components of $
%X_{H}^p\,(H=C_g)$
%is at most $n_g$, with equality exactly when $X_{H}^p$
%is $1$-dimensional. 
If $G$ has dimension $>1$ and $g$ is a vertex
of an $i$-face with $i>1$, then the other $i$ vertices of this
face lie in the same connected component of $\Lk_g=X_{H}^p$
for $H=C_g$.
These vertices thus contribute $i$ to the
$n_g$ of Corollary \ref{corollary:clique:complex:dimension1}, 
but only $1$ to the $m_g$ of 
Corollary \ref{corollary:clique:complex:general}.
The homology $H_1(X_G;F)$ thus has $\Z_g$-rank strictly
less than what is advertised in Corollary 
\ref{corollary:clique:complex:dimension1} when $\dim G>1$
and $g$ lies in an $i$-face for $i>1$. For example in
$G=\Sym{10}$, the smallest symmetric group
of dimension $2$, the element $g=(1,2)$ has $h_1=(3,4,5)$
and $h_2=(6,7,8,9,10)$, contributing $2$ to $n_g$, but
the $2$-cell $(1,2)(3,4,5)(6,7,8,9,10)$ in $X_G$ means they 
contribute $1$ to $m_g$.

%%%%%%%%%%%%%%%%%%%%%%%%%%%%%%%%%%%%%%%%%%%%%%%%%%%%%
%%%%%%%%%%%%%%%%%%%%%%%%%%%%%%%%%%%%%%%%%%%%%%%%%%%%%

\section{$H_*(X_G;F)$ for certain families of groups}
\label{section:examples}

In this section we apply Corollaries 
\ref{corollary:clique:complex:general}-\ref{corollary:clique:complex:dimension1}
to compute the sheaf homology $H_*(X_G;F)$ for certain $G$.
There are three possible routes to calculation: the first is to use
brute force for the groups $G$ with $\dim G\leq 1$. As this only
requires knowledge of the conjugacy classes of $G$ we could in 
principle compute the sheaf homology of $X_G$
for many of the sporadic simple groups; we content ourselves, by
way of example, with the smallest of the Mathieu groups $M_{11}$.
The second route (which leads to the third) 
is to investigate those $G$ for which the $p$-restricted
clique complex $X^p_{C_g}$ of centralizers
is well understood. 
For us, this will mean the nilpotent groups,
for which we avail ourselves of the theory of shellings. 
As the key ingredient in the calculation of $H_*(X_G;F)$ is the
ordinary homology of the centralizers of elements, the groups
for which these centralizers are nilpotent then naturally follow.

%%%%%%%%%%%%%%%%%%%%%%%%%%%%%%%%%%%%%%%%%%%%%%%%%%%%%

\subsection{Low dimensional examples}

When $G$ has dimension $\leq 1$ it is possible to 
explicitly compute
the homology $H_*(X_G;F)$ by brute force
using Corollaries 
\ref{corollary:clique:complex:dimension0}-\ref{corollary:clique:complex:dimension1}
and some knowledge of the conjugacy classes in $G$. 
As an illustration we continue the analysis from
Section \ref{section:clique:complex:sporadic},
except for the 
Mathieu group $M_{11}$. The assertions below follow
from a combination of the $\ams{ATLAS}$ \cite{MR0827219}
and {\sc Gap} \cite{gap}.

\begin{proposition}
\label{sheaf:homology:Mathieu:11}
Let $G$ be the Mathieu group $M_{11}$ of order
$m=2^4\cdot 3^2\cdot 5\cdot 11$
and $F$ the order sheaf on $X_G$. Then:
$$
H_0(X_G;F)\cong \Z_4^{n_4}\oplus\Z_5^{n_5}\oplus\Z_8^{n_8}
\oplus\Z_{11}^{n_{11}}
\text{ and }
H_1(X_G;F)\cong \Z_2^{n_2}\oplus\Z_3^{n_3},
$$
where $n_2=7m/(2^4\cdot 3),n_3=m/3^2,
n_4=m/8,
n_5=m/5,n_8=m/4$ and $n_{11}=2m/11$.
\end{proposition}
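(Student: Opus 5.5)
The plan is to apply Corollaries \ref{corollary:clique:complex:dimension0}--\ref{corollary:clique:complex:dimension1} directly. All that is needed is the list of conjugacy classes of $M_{11}$ from the $\ams{ATLAS}$, with element orders and centralizer orders:
$$
1A,\ 2A\,(48),\ 3A\,(18),\ 4A\,(8),\ 5A\,(5),\ 6A\,(6),\ 8A\,(8),\ 8B\,(8),\ 11A\,(11),\ 11B\,(11),
$$
where the bracketed number is $|C_g|$. The only element order with more than one prime divisor is $6$, so by Lemma \ref{lemma2}(i) $M_{11}$ is $1$-dimensional. Corollary \ref{corollary:clique:complex:dimension1} then applies, and $H_i(X_G;F)=0$ for $i\neq 0,1$.

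For $H_0$, I will use Lemma \ref{lemma2}(iii) to pick out the $p$-elements whose centralizer is a $p$-group. These are exactly the classes $4A,5A,8A,8B,11A,11B$. By contrast $2A$ and $3A$ have centralizers of orders $48$ and $18$, which are not prime powers. Each isolated $g$ contributes one copy of $\Z_g$, so the multiplicity of $\Z_{|g|}$ is the total size of the relevant classes, computed as $m/|C_g|$. This gives $n_4=m/8$, $n_5=m/5$, $n_8=2\cdot m/8=m/4$ and $n_{11}=2\cdot m/11$.

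For $H_1$, the vertices that contribute are the $2$-elements and $3$-elements in $2A$ and $3A$. For each such $g$ I must count $n_g$, the number of $p'$-elements of $C_g$ with $p'$ a prime different from $p$; the contribution is then $\Z_g^{n_g-1}$.
\begin{enumerate}
\item For $g\in 2A$, I identify $C_g\cong \mathrm{GL}_2(3)$ (order $48$). Since $\dim M_{11}=1$, the only $p'$-elements commuting with $g$ are $3$-elements, because there are no elements of order $10$ or $22$. Now $\mathrm{GL}_2(3)$ has exactly $8$ elements of order $3$, so $n_g-1=7$. Summing over the $m/48$ conjugates of $g$ gives $n_2=7m/(2^4\cdot 3)$.
\item For $g\in 3A$, I identify $C_g\cong \Z_3\times \Sym{3}$ (order $18$). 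The only $p'$-elements commuting with $g$ are the involutions of $C_g$, of which there are $3$, so $n_g-1=2$. Summing over the $m/18$ conjugates gives $n_3=2m/18=m/9=m/3^2$.
\end{enumerate}

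The main obstacle is the identification of the centralizers $C_{2A}$ and $C_{3A}$, and in particular the exact count of elements of order $3$ in $C_{2A}$ and of involutions in $C_{3A}$. The $\ams{ATLAS}$ gives only the orders of these centralizers, so here I would use {\sc Gap} with the standard generators to confirm the structures $\mathrm{GL}_2(3)$ and $3\times\Sym{3}$. As a consistency check, the class $6A$ has size $m/6$. Counting edges of $X_G$ from the $2A$ side gives $(m/48)\cdot 8=m/6$, and counting from the $3A$ side gives $(m/18)\cdot 3=m/6$, as required.
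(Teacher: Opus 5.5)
Your proposal is correct and follows the same route as the paper. You apply Corollary \ref{corollary:clique:complex:dimension1}, read off $H_0$ from the classes $4A,5A,8A,8B,11A,11B$, and get $H_1$ from the $8$ elements of order $3$ in $C_{2A}$ and the $3$ involutions in $C_{3A}$; your identifications $C_{2A}\cong\mathrm{GL}_2(3)$ and $C_{3A}\cong\Z_3\times\Sym{3}$ are also right. Your ``consistency check'' is in fact a complete proof of those two counts: the edges of $X_G$ correspond to the $m/6$ elements of order $6$, and $G$ acts transitively on $2A$ and on $3A$, so each involution lies in $(m/6)/(m/48)=8$ edges and each $3$-element in $(m/6)/(m/18)=3$ edges. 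So neither the centralizer structures nor {\sc Gap} are needed.
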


\begin{proof}
In $\ams{ATLAS}$ notation, the group $M_{11}$ has two conjugacy classes
of $p$-elements, $2A$ and $3A$,
where the centralizers are not $p$-groups, and six
others, $4A,5A,8A,8B,11A$ and $11B$, where they are.
These latter six classes have sizes $m/8,m/5,m/8,m/8,m/11$ and $m/11$,
from which the degree $0$ homology follows using
Corollary \ref{corollary:clique:complex:dimension1}.
In the natural representation $M_{11}$ has generators:
$$
a=(2,10)(4,11)(5,7)(8,9) \text{ and } b=(1,4,3,8)(2,5,6,9):
$$
and conjugacy class representatives $a\in 2A$
and $ab^2ab^2\in 3A$. The centralizer of $a$
contains $8$ elements of order $3$, and no other $p^\prime$
elements, for $p^\prime$ a prime $\not=2$; the centralizer of $ab^2ab^2$
contains $3$ elements of order $2$. This, combined with
the sizes of $2A$ and $3A$ --- $m/48$ and $m/18$ --- gives the homology
in degree $1$.
\qed
\end{proof}

%%%%%%%%%%%%%%%%%%%%%%%%%%%%%%%%%%%%%%%%%%%%%%%%%%%%%

\subsection{Shellings}

The theory of shellings is a simple yet powerful
tool for calculating the homology of simplicial 
complexes. In this section we see that if the complex
$X$ can be shelled, then not only does this yield the 
ordinary homology $H_*X$, but also the links $\Lk_x$
of vertices can be shelled, and hence \emph{their\/} ordinary homologies
can be computed. Theorem \ref{clique:complex:theoremeuyrh9erw}
then gives the \emph{sheaf\/} homology of $X$ with coefficients
in an order sheaf. This idea is particularly effective for the 
clique complexes $X_G$ of the nilpotent groups $G$.

With this in mind, a
\emph{shelling\/} of a finite dimensional (not necessarily pure)
simplicial complex $X$ is a 
total ordering $\sigma_1,\sigma_2,\ldots,\sigma_m$ of the
facets such that for all $t>1$:
\begin{equation}
\label{eq5}
    \overline{\sigma}_t
\cap
{\textstyle \bigcup}_{s<t}\,\overline{\sigma}_s,
%{\textstyle \bigcup}_{i=1}^{t-1}\,\overline{\sigma}_i,
\end{equation}
is a \emph{pure\/} $(\dim \sigma_t-1)$-dimensional subcomplex of 
the simplex $\overline{\sigma}_t$. In this case define:
\begin{equation}
\label{eq6}
\mathscr{R}(\sigma_t)
=\{x\in{\sigma}_t:
{\sigma}_t\,\setminus\, x\in 
{\textstyle \bigcup}_{s<t}\,\overline{\sigma}_s\},
%{\textstyle\bigcup}_{i=1}^{t-1}\overline{\sigma}_i\},
\end{equation}
and call $\ss$ a \emph{homology facet} when 
$\mathscr{R}(\sigma)=\sigma$. 
The ordinary \emph{reduced\/} homology
$\widetilde{H}_i X$ is then isomorphic to the free $\Z$-module with
basis the $i$-dimensional homology facets
--- see \cite{MR1333388}. 
The unreduced homology $H_i X$ is thus $\widetilde{H}_i X$
when $i>0$, and the free $\Z$-module on the set of connected
components of $X$ when $i=0$.
We use the
theory of shellings for non-pure
complexes in general, although in the 
application below the complexes are pure
and so it makes no difference.

Shellability is a property that is inherited by links.
For, if $\ss$ is a face of $X$ and $\tau$ is a facet of
the link $\Lk_\ss$ then $\tau\cup\ss$ is a facet of $X$.
If $\sigma_1,\sigma_2,\ldots,\sigma_m$ is a shelling of $X$
then $\tau\cup\ss$ is one of the $\ss_t$. Thus the facets of
$\Lk_\ss$ can be totally ordered $\tau_1,\tau_2,\ldots,\tau_n$
so that if $\tau_j\cup\ss=\ss_{i_j}$ then
$i_1<i_2<\cdots<i_n$. By \cite{MR1401765}*{Proposition 10.14}
this total ordering is then a shelling of $\Lk_\ss$.

The clique complexes of nilpotent groups, and more
generally the complexes $X(a_1,a_2,\ldots,a_n)$ defined in
Section \ref{section:clique:complex:nilpotent} can be shelled:

\begin{proposition}
  \label{clique:nilpotent:homology1}
The reduced homology:
$$
\widetilde{H}_{n-1} \,X(a_1,a_2,\ldots,a_n)\cong \Z^{(a_1-1)\ldots(a_n-1)},
$$
and $\widetilde{H}_i\,X(a_1,a_2,\ldots,a_n)$ vanishes for all 
$i\not= n-1$.
\end{proposition}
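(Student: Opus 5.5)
We will exhibit an explicit shelling of $X=X(a_1,\ldots,a_n)$ and count homology facets, using the result quoted above that $\widetilde{H}_iX$ is free on the $i$-dimensional homology facets. Label the vertices of the $j$-th factor $X(a_j)$ by $1,2,\ldots,a_j$. A facet is then an $n$-tuple $x=(x_1,\ldots,x_n)$ with $1\leq x_j\leq a_j$ for all $j$, and all facets are $(n-1)$-dimensional. Order the facets lexicographically: $\sigma_s<\sigma_t$ when, at the first coordinate where they differ, $\sigma_s$ has the smaller entry.

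The key step is to verify the shelling condition and to identify $\mathscr{R}(\sigma_t)$. Fix a facet $x=(x_1,\ldots,x_n)$. We claim that
$$\overline{x}\cap\bigcup_{y<x}\overline{y}$$
is generated by the codimension-one faces $x\setminus x_j$ with $x_j>1$. Such a face lies in the earlier facet obtained by replacing $x_j$ with $1$.

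For the converse, let $\tau$ be a face of $x$ lying in some earlier facet $y$. Let $j$ be the first coordinate where $y$ and $x$ differ, so $y_j<x_j$, and in particular $x_j>1$. Since $\tau\subseteq x\cap y$ and $x_j\neq y_j$, the vertex $x_j$ is not in $\tau$. Hence $\tau\subseteq x\setminus x_j$.

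So the intersection is the union of the codimension-one faces $x\setminus x_j$ with $x_j>1$, which is pure of dimension $n-2$. This set is nonempty for every facet other than the first, $(1,\ldots,1)$. Consequently
$$\mathscr{R}(x)=\{x_j : x_j>1\},$$
and $x$ is a homology facet exactly when every $x_j\geq 2$. There are $(a_1-1)\cdots(a_n-1)$ such facets, all of dimension $n-1$. This gives the rank of $\widetilde{H}_{n-1}X$ and the vanishing of $\widetilde{H}_iX$ for $i\neq n-1$.

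The main obstacle is the converse inclusion in the shelling condition. The lexicographic argument above handles it, but it needs care with what "face of $x$ lying in $y$" means in a join: faces are sets of vertices drawn from distinct factors, and vertices in different factors are distinct even when their labels coincide. We also need to check the degenerate cases. If some $a_j=1$, the count is $0$ and $X$ is a cone, hence contractible, which is consistent. If $n=1$, $X$ is $a_1$ points and $\widetilde{H}_0\cong\Z^{a_1-1}$, as the shelling gives.
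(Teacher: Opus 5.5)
Your proposal is correct and is essentially the paper's proof. Both use the lexicographic shelling, in which $\overline{x}\cap\bigcup_{y<x}\overline{y}$ is generated by the faces $x\setminus x_j$ with $x_j>1$, so the homology facets are exactly those with every $x_j\geq 2$; your converse step, via the first coordinate where an earlier facet differs from $x$, is just the contrapositive of the paper's observation that if all the deleted coordinates equal $1$ then every facet containing $\tau$ comes at or after $x$.
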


 \begin{proof}
$X(a_1)$ consists of $a_1$ isolated vertices and so the result
is immediate. For $n>1$, order
the facets of $X(a_1,a_2,\ldots,a_n)$ lexicographically,
so that 
$(x_1,x_2,\ldots,x_n)<(y_1,y_2,$ $\ldots,y_n)$ iff 
for some $\ell$ we have 
$x_1=y_1,\ldots,x_\ell=y_\ell$ and $x_{\ell+1}<y_{\ell+1}$.
The ordering in the $j$-th component is the usual one in the 
integers $0\leq x_j\leq a_j$.
If $\sigma_t=(x_1,x_2,\ldots,x_n)$ then 
$\overline{\tau}$ is a subsimplex of $\overline{\sigma}_t$
exactly when $\tau=(y_1,y_2,\ldots,y_n)$ and there are
$1\leq i_1<i_2<\cdots<i_k\leq n$ such that 
$y_{i_1}=y_{i_2}=\cdots=y_{i_k}=0$ and 
in all other coordinates
the $y_j$ are
equal to the $x_j$.
Then:
\begin{equation}
    \overline{\tau}\in
    {\textstyle \bigcup}_{s<t}\,\overline{\sigma}_s
    %\bigcup_{i<t}\,\overline{\sigma}_i
    \text{ if and only if }
    x_{i_j}>1\text{ for at least one }x_{i_j}.
\end{equation}
For, if an $x_{i_j}>1$ then replacing it by $x_{i_j}-1$ gives
a new facet $\sigma_s$ with $\overline{\tau}$ a subsimplex of 
$\overline{\sigma}_s$ and $\sigma_s<\sigma_t$. Conversely, 
if all the $x_{i_j}=1$ then $\overline{\tau}$ is a 
subsimplex
only of the facets with the property that
all the $x_{i_j}\geq 1$, and these are
$\geq\sigma_t$ in the total ordering.

The result is that (\ref{eq5}) is the union of the maximal 
subsimplices of $\overline{\sigma}_t$ that are obtained 
by replacing an $x_i>1$ with $x_i=0$ in
$(x_1,x_2,\ldots,x_n)$.
Hence $X(a_1,a_2,\ldots,a_n)$ is shellable,
and 
$\mathscr{R}(\sigma_t)=\sigma_t$ if and only if
$x_i>1$ for all $1\leq i\leq n$.
As $X(a_1,a_2,\ldots,a_n)$ is connected, pure
$(n-1)$-dimensional, it has
homology in top degree $i=n-1$ that is free of rank
$(a_1-1)(a_2-1)\cdots(a_n-1)$. The
homology vanishes in all other degrees.
   \qed
 \end{proof}

\begin{corollary}
\label{corollary:ordinary:nilpotent}
Let $G$ be a nilpotent group of order $q_1q_2\ldots q_n$,
with the $q_i$ powers of distinct primes. 
Then:
$$
\widetilde{H}_{n-1}\,X_G\cong \Z^{(q_1-2)\ldots(q_n-2)}.
$$
 \end{corollary}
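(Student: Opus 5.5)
By the discussion in Section \ref{section:clique:complex:nilpotent}, a nilpotent $G$ of order $q_1q_2\ldots q_n$ is the direct product $G_1\times\cdots\times G_n$ of its Sylow subgroups, with $|G_i|=q_i$. Iterating Lemma \ref{lemma4}(ii) then gives
$$
X_G\cong X(q_1-1,q_2-1,\ldots,q_n-1).
$$
We will apply Proposition \ref{clique:nilpotent:homology1} with $a_i=q_i-1$. Each $a_i\geq 1$ because $q_i\geq 2$, so the complex is non-empty. The proposition gives
$$
\widetilde{H}_{n-1}X_G\cong\Z^{(a_1-1)\cdots(a_n-1)}=\Z^{(q_1-2)\cdots(q_n-2)},
$$
with the reduced homology vanishing in all other degrees, which is the claim.

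Only one point needs care. The proof of Proposition \ref{clique:nilpotent:homology1} uses a lexicographic shelling in which the homology facets are the $(x_1,\ldots,x_n)$ with every $x_i>1$. There are exactly $\prod(a_i-1)$ of these, and this count remains correct when some $a_i=1$: that happens when $q_i=2$, so $G$ has a Sylow $2$-subgroup of order $2$. In that case there are no homology facets, and the factor $q_i-2=0$ agrees.

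The case $n=1$ also needs a separate check, since then $X_G=X(q_1-1)$ consists of $q_1-1$ isolated vertices. Its reduced $\widetilde{H}_0$ is free of rank $q_1-2$, which matches the formula. I expect no real obstacle; the only thing to get right is the $-1$ shift between the $a_i$ and the $q_i$.
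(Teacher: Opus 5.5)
Your proof is correct and is the paper's own argument: identify $X_G$ with $X(q_1-1,\ldots,q_n-1)$ via the Sylow decomposition and Lemma~\ref{lemma4}(ii), then substitute $a_i=q_i-1$ into Proposition~\ref{clique:nilpotent:homology1}. Your extra checks of the case $q_i=2$ (where the complex is a cone, so rank $0$ is right) and the case $n=1$ are sound; they spell out what the paper leaves implicit.
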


 The result follows immediately from the fact that
 $X_G\cong X(q_1-1,q_2-1,\ldots,q_n-1)$.

 Recall that for $k$ a field, 
 the Stanley-Reisner ring $k[X]$ of a simplicial complex $X$
 is the quotient of the polynomial ring
 $k[X_0]$ on the vertices by the ideal generated by 
 the monomials $x_{i_1}x_{i_2}\ldots x_{i_r}$ where
 $\{x_{i_1},x_{i_2},\ldots, x_{i_r}\}$ is \emph{not\/}
 a face of $X$ --- see \cite{MR1453579}*{Chapter II}.
 A simplicial complex is
 \emph{Cohen-Macaulay\/} (over $\Z$) when its Stanley-Reisner
 ring is Cohen-Macaulay --- see \cite{MR1251956}. 
 Equivalently, for any face $\ss$,
 the link $\Lk_\ss$ has ordinary homology that vanishes
 in all degrees except degree $0$ and degree $\dim\Lk_\ss$.
 As pure shellable complexes are Cohen-Macaulay \cite{MR0570784}
 we have:

 \begin{corollary}
The clique complex of a nilpotent group is
Cohen-Macaulay.
 \end{corollary}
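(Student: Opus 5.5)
The plan is to show that $X_G$ is a pure shellable complex and then quote the result, cited just before the statement, that pure shellable complexes are Cohen-Macaulay \cite{MR0570784}.

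First, write $G\cong G_1\times\cdots\times G_n$ as the direct product of its Sylow subgroups, as in Section \ref{section:clique:complex:nilpotent}. By Lemma \ref{lemma4}(ii) this gives $X_G\cong X(q_1-1,\ldots,q_n-1)$. That section already observed that $X(a_1,\ldots,a_n)$ is pure $(n-1)$-dimensional, since every facet is an $n$-tuple with all $x_i\neq 0$.

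Second, the proof of Proposition \ref{clique:nilpotent:homology1} shows that the lexicographic ordering of the facets is a shelling. So $X_G$ is pure and shellable. Cohen-Macaulayness is a property of the underlying abstract complex, so the isomorphism with $X(q_1-1,\ldots,q_n-1)$ transfers the conclusion.

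As a sanity check one can also use the link characterisation directly. For a face $\sigma$ supported on the coordinate set $I\subseteq\{1,\ldots,n\}$, the link $\Lk_\sigma$ is the join of the $X(a_j)$ for $j\notin I$, namely $X(a_j:j\notin I)$. This is again of the same form, and Proposition \ref{clique:nilpotent:homology1} shows its reduced homology is concentrated in its top degree $\dim\Lk_\sigma$. The cases to handle separately are the empty link ($I$ everything) and a $0$-dimensional link, where only degree $0$ survives. This confirms the condition stated before the corollary.

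The only point needing care is that the shelling in Proposition \ref{clique:nilpotent:homology1} was written for $n>1$; the case $n=1$ is a finite set of points, which is trivially pure and shellable. I expect no real obstacle beyond this bookkeeping.
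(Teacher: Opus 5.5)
Your proposal is correct and follows essentially the paper's route. The paper gets the corollary by noting that $X_G\cong X(q_1-1,\ldots,q_n-1)$ is pure and, by the lexicographic ordering in Proposition \ref{clique:nilpotent:homology1}, shellable, and then quoting the fact that pure shellable complexes are Cohen-Macaulay. Your link-by-link check via the homology of links is a harmless extra that the paper does not carry out.
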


We also remark that the homology in Proposition
\ref{clique:nilpotent:homology1} can be computed without 
recourse to shellings: an 
application of the Meyer-Vietoris long exact sequence 
expresses the reduced homology of a join $X\ast Y$ in terms
of the reduced homologies of $X,Y$ and $X\times Y$. An
inductive argument then gives the result.
We prefer the shelling though, as it is the computational
route outlined in the first paragraph of this section 
that we wish to emphasise.

We now proceed to the homology when the complex is endowed
with an order sheaf. Again, we can start in the more
general setting of the complexes $X(a_1,a_2,\ldots,a_n)$:

  \begin{theorem}
   Let $X=X(a_1,a_2,\ldots,a_n)$ 
   and $F$ an order sheaf
   on $X$. Then the homology $H_i(X;F)$ vanishes outside
   degree $i=n-1$ where:
       \begin{equation}
       \label{equation:sheaf:homology:nilpotent}
        H_{n-1}(X;F)\cong
        \bigoplus_{x\in X_0}\Z_x^{(a_1-1)\ldots\widehat{(a_j-1)}\ldots (a_n-1)}
       \end{equation}
       the sum over the vertices
       $x=(0,\ldots,0,x_j,0,\ldots,0)$ and where
       $\widehat{a}=1$.
       %If $X=X(a_1)$ then 
       %$H_0(X;F)\cong\bigoplus_{1\leq x\leq a_1}\Z_x$.
 \end{theorem}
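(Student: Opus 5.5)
The plan is to feed Theorem \ref{clique:complex:theoremeuyrh9erw} with the ordinary homology of the links of vertices of $X=X(a_1,a_2,\ldots,a_n)$, computed from Proposition \ref{clique:nilpotent:homology1}.

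\emph{Identifying the links.} Fix a vertex $x=(0,\ldots,0,x_j,0,\ldots,0)$ with $1\leq x_j\leq a_j$. A face $\tau=(y_1,\ldots,y_n)$ lies in $\Lk_x$ exactly when $\tau\cup x$ is a face and $x\notin\tau$. This happens precisely when $y_j=0$, with no restriction on the other coordinates. Hence
$$
\Lk_x\cong X(a_1,\ldots,\widehat{a_j},\ldots,a_n),
$$
the $(n-1)$-fold join. This is the same argument as the one given for $X_G^p$ in Section \ref{section:clique:complex:nilpotent}.

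\emph{Case $n\geq 2$.} The link is a nonempty join of $n-1$ factors. By Proposition \ref{clique:nilpotent:homology1} its reduced integral homology is free, concentrated in degree $n-2$, of rank $\prod_{k\neq j}(a_k-1)$.

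If $n\geq 3$, the link is connected, so $m_x=1$. Since the integral homology is free, the universal coefficient theorem gives $H_{i-1}(\Lk_x,\Z_x)\cong\Z_x^{\prod_{k\neq j}(a_k-1)}$ for $i-1=n-2$, and zero in the other positive degrees. Theorem \ref{clique:complex:theoremeuyrh9erw} then produces $H_{n-1}(X;F)$ as claimed, with each vertex contributing its term, and gives vanishing for $i\geq 2$ with $i\neq n-1$. For $H_1$, each term $\Z_x^{m_x-1}$ vanishes because $m_x=1$. For $H_0$, there are no isolated vertices since $n\geq 2$.

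If $n=2$, the link is $X(a_{k})$ with $k\neq j$, which consists of $a_k$ points. So $m_x=a_k$, and $H_1(X;F)\cong\bigoplus_x\Z_x^{a_k-1}$, again matching the formula. The degrees $\geq 2$ vanish because the link is $0$-dimensional.

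\emph{Case $n=1$.} Here $X=X(a_1)$ consists of isolated vertices. The formula is interpreted with the empty product equal to $1$, giving $H_0(X;F)\cong\bigoplus_x\Z_x$. This agrees with the degree $0$ statement of Theorem \ref{clique:complex:theoremeuyrh9erw}.

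\emph{Main obstacle.} The only delicate points are bookkeeping. The first is the passage from integral to $\Z_x$-coefficients; this needs freeness, which shelling supplies, so no Tor terms appear. The second is keeping track of the degree shift between reduced and unreduced homology in degree $0$, which is exactly where the cases $n=1$ and $n=2$ need separate treatment.
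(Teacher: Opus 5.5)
Your proposal is correct and follows essentially the same route as the paper's proof: identify $\Lk_x\cong X(a_1,\ldots,\widehat{a_j},\ldots,a_n)$, feed Proposition~\ref{clique:nilpotent:homology1} into Theorem~\ref{clique:complex:theoremeuyrh9erw}, and handle $n\geq 3$, $n=2$ and $n=1$ separately according to how degrees $0$ and $1$ behave. You also make explicit a step the paper leaves implicit, namely that freeness of the integral homology of the link kills the Tor terms when passing to $\Z_x$-coefficients.
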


 \begin{proof}
 If $x=(0,\ldots,0,x_j,0,\ldots,0)$ is a vertex with 
 $x_j$ fixed then the link
 $\Lk_x$ consists of the $(x_1,\ldots,x_{j-1},0,x_{j+1},\ldots,x_n)$
 with the $x_k$ arbitrary when $k\not= j$. The shelling 
 in Proposition \ref{clique:nilpotent:homology1} induces 
 a shelling of the facets of $\Lk_x$ which is also 
 the lexicographic order of these tuples. 
 Alternatively, $\Lk_x$ is clearly the complex
 $X(a_1,\ldots,\widehat{a}_j,\ldots,a_n)$. Either way,
 the result can be assembled using 
 Theorem \ref{clique:complex:theoremeuyrh9erw} and
 Proposition \ref{clique:nilpotent:homology1}.

 If $n>2$ then the homology in degree $i$, for
 $2\leq i\leq\dim X=n-1$, is the sum over the vertices
 $x$ of the homologies $H_{i-1}(\Lk_x,\Z_x)$, where
 each such is the summand given in 
 (\ref{equation:sheaf:homology:nilpotent}) when the degree 
 $i=n-1$, and $0$ elsewhere. 
 Also, the links $\Lk_x$ are connected and there are no isolated
 vertices, so the homology in degrees $0$ and $1$ vanishes.
 When $n=2$ the links $\Lk_x$ have $a_1$ or $a_2$ connected components
 and the degree $1$ homology is (\ref{equation:sheaf:homology:nilpotent})
 and degree $0$ vanishes. Finally, when $n=1$ the homology
 in degree $0$ is $\bigoplus_{X_0}\Z_x$ --- and this is also
 (\ref{equation:sheaf:homology:nilpotent}) --- and $0$ in other degrees.
   \qed
 \end{proof}

\begin{corollary}
\label{corollary:sheaf:nilpotent}
Let $G$ be a nilpotent group of order $q_1q_2\ldots q_n$,
with the $q_i$ powers of distinct primes,
and let $F$ be the order sheaf on $X_G$.
Then:
\begin{equation*}
       %\label{equation:sheaf:homology:nilpotent}
        H_{n-1}(X_G;F)\cong
        \bigoplus_{j=1}^n
        \bigoplus_{g}\Z_g^{(q_1-2)\ldots\widehat{(q_j-2)}\ldots (q_n-2)}
       \end{equation*}
       with the second sum over the $p_j$-elements $g$
       and where $\widehat{a}=1$.
       %If $n=1$ then $H_0(X_G;F)\cong
       %\bigoplus\Z_g$, the sum over all non-trivial elements of $G$.
       The homology vanishes in all other degrees. 
\end{corollary}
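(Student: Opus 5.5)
This follows by specializing the preceding theorem to the clique complex of $G$. First I identify the complex. As recalled in Section \ref{section:clique:complex:nilpotent}, $G\cong G_1\times\cdots\times G_n$ with $|G_j|=q_j$, a $p_j$-group. By Lemma \ref{lemma4}(ii), $X_G\cong X(q_1-1,\ldots,q_n-1)$, where the $q_j-1$ vertices in the $j$-th factor are exactly the non-trivial elements of $G_j$. These are precisely the $p_j$-elements of $G$, since every $p_j$-element lies in the unique Sylow $p_j$-subgroup.

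Next I check that the order sheaf on $X_G$ is an order sheaf on $X(q_1-1,\ldots,q_n-1)$ in the sense of Section \ref{sheaves:order:sheaves}. A vertex $g$ is assigned $\Z_g=\Z_{|g|}$, with $|g|$ a power of $p_j$ and $|g|\geq 2$. Vertices in a common face come from different Sylow factors, so their orders are pairwise coprime. The value on a face $g_0\cdots g_i$ is $\Z_{|g_0|\cdots|g_i|}$, and the structure maps are reduction maps. So the hypotheses of the preceding theorem hold with $a_j=q_j-1$.

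Applying that theorem with $a_j=q_j-1$ gives vanishing outside degree $n-1$. In degree $n-1$ the summand for the vertex $x=g$ lying in the $j$-th factor is $\Z_g^{\prod_{k\neq j}(a_k-1)}=\Z_g^{\prod_{k\ne j}(q_k-2)}$. Grouping the vertices by the index $j$ of their factor, that is, by which prime $p_j$ they are elements for, gives the displayed double sum.

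The only step needing care is the degenerate case $n=1$. There the empty product is $1$, so the formula reads $H_0(X_G;F)\cong\bigoplus_g\Z_g$ over all non-trivial elements, which is consistent with the $n=1$ case of the theorem and with Corollary \ref{corollary:clique:complex:dimension0}. One should also note the case $q_j=2$: there the exponent factor $q_j-2=0$ kills the corresponding summands, and this agrees with the theorem as stated, which allows $a_j=1$. I expect no genuine obstacle beyond this bookkeeping.
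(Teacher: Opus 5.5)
Your proposal is correct and follows the paper's route: the paper states this corollary without separate proof, as an immediate specialization of the preceding theorem on $X(a_1,\ldots,a_n)$. It uses $X_G\cong X(q_1-1,\ldots,q_n-1)$ with $a_j=q_j-1$, so the exponent becomes $\prod_{k\neq j}(q_k-2)$ and the vertices are grouped by their prime $p_j$, exactly as you do.
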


%%%%%%%%%%%%%%%%%%%%%%%%%%%%%%%%%%%%%%%%%%%%%%%%%%%%%

\subsection{$CN$-groups}

Corollary \ref{corollary:clique:complex:general} 
shows that to compute the sheaf 
homology of the clique complex $X_G$ we need 
to know the ordinary homology of the ($p$-restricted)
clique complex of the centralizers of elements. 
By Corollary \ref{corollary:ordinary:nilpotent} 
we know the
ordinary homology of $X_G$ (and $X_G^p$) when
$G$ is a nilpotent group. This naturally leads us
to consider those groups for which the centralizers
of all non-identity elements are nilpotent. 

Such groups are called $CN$-groups.
It turns out that 
the class of $CN$-groups is quite special; nevertheless, they played
an important early role in the classification of finite
simple groups. Of particular note is the
Feit-Hall-Thompson theorem of 1960: any non-soluble $CN$-group
has even order; see \cite{MR0114856}. 
This presaged
Feit and Thompson's
more famous 1963 result on non-abelian finite simple groups.

The simple $CN$-groups were classified by Suzuki in
\cites{MR0131459,MR0136646}.
They are:
\begin{enumerate}[(i).]
    \item the projective special linear groups 
    $\mathrm{PSL}_2(q)$ with $q=2^m$;
    \item the $\mathrm{PSL}_2(p)$ with $p$ a 
    Fermat prime or a Mersenne prime;
    \item the Suzuki groups $\mathrm{Suz}(2^{2m+1})$;
    \item the groups $\mathrm{PSL}_2(9)$ and $\mathrm{PSL}_3(4)$,
\end{enumerate}
and, of course, the $\Z_p$ for $p$ a prime.
Recall that a Fermat prime is a prime of the form
$2^m+1$ (in which case $m$ itself must also be a 
power of $2$) and a Mersenne prime has the form $2^m-1$,
in which case $m$ is prime. 
There are only finitely many known examples 
of either kind of prime; it is conjectured that 
there are in fact only finitely many Fermat
primes %(namely, $2^1+1,2^2+1,2^4+1,2^8+1$ and $2^{16}+1$)
but there are infinitely many Mersenne
primes.

Combining Corollaries 
\ref{corollary:clique:complex:general} 
and \ref{corollary:ordinary:nilpotent},
we get:

\begin{theorem}
  \label{theorem:general:result:CNgroups}
  Let $G$ be a $CN$-group and $F$ the order sheaf
  on $X_G$. Then
      in degrees $0\leq i\leq \dim G$,
      the homology:
      \begin{equation}
      \label{homology:CN:groups:equation100}
      H_i(X_G;F) \cong \bigoplus_{g}
      \Z_g^{(q_0-2)\ldots\widehat{(q_j-2)}\ldots (q_{i}-2)}
      \end{equation}
      the sum over the $p$-elements $g$
    for some prime $p$ with $|C_g|=q_0q_1\ldots q_{i}$
    a product of distinct prime powers, $q_j$ a power of $p$
    and $\widehat{a}=1$.
  The homology vanishes in all other 
degrees.
\end{theorem}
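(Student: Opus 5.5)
The plan is to work vertex by vertex. Corollary \ref{corollary:clique:complex:general} reduces $H_*(X_G;F)$ to the ordinary homology, with constant coefficients $\Z_g$, of the links $\Lk_g=X^p_{C_g}$ for the $p$-elements $g\in G$. The first step is to identify these links. Since $G$ is a $CN$-group, each $C_g$ with $g\neq 1$ is nilpotent. Write $|C_g|=q_0q_1\ldots q_i$ as a product of powers of distinct primes, with $q_j$ the power of $p$; the Sylow $p$-part is nontrivial because it contains $g$. Then $C_g$ is the direct product of its Sylow subgroups. By the description of $p$-restricted clique complexes of nilpotent groups in Section \ref{section:clique:complex:nilpotent} we get
$$\Lk_g=X^p_{C_g}\cong X(q_0-1,\ldots,\widehat{q_j-1},\ldots,q_i-1),$$
a join of $i$ discrete sets. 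In particular $\Lk_g$ is pure of dimension $i-1$, where $i+1=\omega(|C_g|)$.

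Next, Proposition \ref{clique:nilpotent:homology1} (equivalently Corollary \ref{corollary:ordinary:nilpotent}) gives the homology of the link. The reduced homology is free, concentrated in degree $i-1$, of rank $\prod_{k\neq j}(q_k-2)$. Since it is free, universal coefficients give $\widetilde H_{i-1}(\Lk_g;\Z_g)\cong\Z_g^{\prod_{k\ne j}(q_k-2)}$, with all other reduced groups zero. The contribution of $g$ is then read off from Theorem \ref{clique:complex:theoremeuyrh9erw}, split by the size of $i$:
\begin{enumerate}[(a).]
\item If $i\geq 2$, the link is connected. So $g$ contributes nothing in degrees $0$ and $1$, and contributes $H_{i-1}(\Lk_g;\Z_g)$ in degree $i$.
\item If $i=1$, the link is a discrete set of $m_g=q_k-1$ points. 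So $g$ contributes $\Z_g^{m_g-1}=\Z_g^{q_k-2}$ in degree $1$, which again matches (\ref{homology:CN:groups:equation100}).
\item If $i=0$, the centralizer $C_g$ is a $p$-group, so $g$ is isolated. It contributes $\Z_g$ in degree $0$, which agrees with the formula because the empty product is $1$.
\end{enumerate}

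Summing these contributions over all $p$-elements $g$ gives (\ref{homology:CN:groups:equation100}) in each degree. Here the index $i$ in the formula is the degree in which $g$ contributes, namely $\omega(|C_g|)-1$. Vanishing outside $0\leq i\leq\dim G$ follows because every $g$ lies in a face of dimension $\omega(|C_g|)-1$, built from elements of each Sylow subgroup of the nilpotent group $C_g$, so $\omega(|C_g|)-1\leq\dim G$.

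The step I expect to be the main obstacle is purely bookkeeping: stating clearly that in degree $i$ the sum runs only over those $g$ with $\omega(|C_g|)=i+1$, and checking the edge cases $i=0,1$ against Theorem \ref{clique:complex:theoremeuyrh9erw}. That theorem treats degrees $0$ and $1$ via connected components rather than reduced homology. The mathematical content is otherwise already contained in the earlier results.
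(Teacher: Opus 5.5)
Your proposal is correct and follows the same route as the paper. Both arguments identify $\Lk_g=X^p_{C_g}$ with $X(q_0-1,\ldots,\widehat{q_j-1},\ldots,q_i-1)$, read off its homology from Proposition \ref{clique:nilpotent:homology1}, and feed this into Theorem \ref{clique:complex:theoremeuyrh9erw} and Corollary \ref{corollary:clique:complex:general}, splitting into the cases $i\geq 2$, $i=1$ and $i=0$. Your degree-$1$ bookkeeping ($m_g=q_k-1$, contributing $\Z_g^{q_k-2}$) is stated more accurately than in the paper, which writes $m_g$ where it means $m_g-1$.
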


If $G$ is a $CN$-group has non-trivial centre then $G$ is nilpotent.
In this case Theorem \ref{theorem:general:result:CNgroups} reduces
to Corollary \ref{corollary:sheaf:nilpotent}.

\begin{proof}
The theorem follows by Corollary \ref{corollary:clique:complex:general}
and Proposition \ref{clique:nilpotent:homology1}. In degrees
$2\leq i\leq \dim G$ a $p$-element with
centralizer order $q_0q_1\ldots q_{i}$ has $p$-reduced clique
complex:
$$
X_H^p\cong X(q_0-1,\ldots,\widehat{q_j-1},\ldots,q_{i}-1),
$$
when it is $q_j$ that is a power of $p$. Each such element
then contributes a summand as stated in the theorem.
The calculations, although not the answers, are slightly different
in degrees $0$ and $1$. 
If $g$ is a $p$-element with centralizer as above then for 
$i=0$ or $i\geq 2$ the complex $X_{C_g}^p$ is connected
and so $m_g=1$. The only contribution to the homology in degree $1$
is thus when $C_g$ has order $q_0q_1$, in which case $m_g$
is either $q_0-2$ or $q_1-2$, and again the summand is as given
in the theorem. The homology in degree $0$ is the sum of the
$\Z_g$ for those $p$-elements $g$ whose centralizer has order $q_0=p^m$;
each such is precisely $\Z_g^{\widehat{q_0-2}}$.
\qed
\end{proof}

Theorem \ref{theorem:general:result:CNgroups} can be interpreted
for any group, and indeed
holds in greater
generality than just the $CN$-groups. In degree $0$ the theorem
holds for all groups: the expression (\ref{homology:CN:groups:equation100})
in Theorem \ref{theorem:general:result:CNgroups} reduces in degree
$0$ to that of Corollary \ref{corollary:clique:complex:general}(iii).
By drawing the schematic $\X_G$, or otherwise, the sheaf homology
$H_*(X_G;F)$ can be computed for $G=\mathrm{PSL}_2(11)$; this 
is not a $CN$-group, but nevertheless has homology 
given by Theorem \ref{theorem:general:result:CNgroups}.

On the other hand, (\ref{homology:CN:groups:equation100}) is not necessarily
the sheaf homology in degree $i>0$ for a non $CN$-group. 
For example, the Mathieu group
$M_{11}$ has exactly two conjugacy classes of elements having
centralizers of orders $q_0q_1$, namely $2A$ --- a conjugacy class of size 
$3\cdot 5\cdot 11$ with centralizers of order $2^4\cdot 3$
--- and $3A$, of size $2^3\cdot 5\cdot 11$ and with order $2\cdot 3^2$; 
see \cite{MR0827219}. The expression 
(\ref{homology:CN:groups:equation100}) gives $\Z_2^{3\cdot 5\cdot 11}$
in degree $1$, whereas the sheaf homology in degree $1$
is $\Z_2^{n_2}\oplus\Z_3^{n_3}$ where
$n_2=7m/(2^4\cdot 3),n_3=m/3^2$ and $m=2^4\cdot 3^2\cdot5\cdot 11$, as
given in 
Proposition \ref{sheaf:homology:Mathieu:11}.

\vspace{1em}

We can explicitly compute the sheaf homology for the simple 
$CN$-groups listed above. Recall that $\omega(n)$ is the 
number of distinct prime divisors of the positive integer $n$,
and let $\Omega(n)$ denote the \emph{set\/}
of prime divisors of $n$.
We discussed the conjugacy classes in the group $G=\mathrm{PSL}_2(q)$ with $q=2^m$, listing them in 
Table \ref{table:PSL:q:2} above.
This shows that:
$$
\dim\mathrm{PSL}_2(q)+1=\max\{1,\omega(q-1),\omega(q+1)\},
$$
with the right hand side equal to $1$ exactly when both
$2^m-1$ and $2^m+1$ are prime powers. 
But then recall that Catalan's conjecture \cite{MR2076124} shows this is impossible unless exactly one is a prime
(and this only happens when $m=3$) or when
$m=2$, in which case both
$2^m-1$ and $2^m+1$ are primes. But if $m\geq 4$
and $2^m+1$ is prime, then necessarily 
$m=2^t$ and $2^{2^t}-1=(2^{2^{t-1}}-1)(2^{2^{t-1}}+1)$ is not prime.
%If the five Fermat primes 
%currently known (namely: $2^1+1,2^2+1,2^4+1,2^8+1$ and
%$2^{16}+1$) are indeed \emph{all\/} the Fermat primes, 
%then $\mathrm{PSL}_2(4)$ is the only $0$-dimensional
%group in this family, and for $m>2$ we have
The upshot is that for $m\geq 4$ we have 
$\dim\mathrm{PSL}_2(2^m)+1=\max\{\omega(2^m-1),\omega(2^m+1)\}$.

\begin{corollary}
\label{corollary:PSL_2:characteristic2:groups}
Let $G=\mathrm{PSL}_2(q)$, where $q=2^m\,(m\geq 2)$,
let $F$ be the order sheaf on $X_G$
and let $c=q\pm 1$.
Then
$H_*(X_G;F)$ is non-vanishing in degrees
$0$ and $\omega(c)-1$.
\begin{enumerate}[(i).]
\item Write out the prime factorisation
$c=\prod_{\Omega(c)} s^{m(s)}$.
Let $r=\frac{q-2}{2}$ when $c=q-1$ and $r=\frac{q}{2}$
when $c=q+1$. Let $\Lambda(c)$ be the set of $\ell$ in the
range $0\leq\ell\leq r$ for which $\gcd(c,\ell)=c/t^{k}$
for some
$t=t(\ell)\in\Omega(c)$
and $k=k(\ell)$ with $0\leq k\leq m(t)$.
Then for 
$i=\omega(c)-1$, the homology $H_i(X_G;F)$ has summand
$\bigoplus_{\Lambda(c)}\Z^{a(\ell)}_{b(\ell)}$,
where $b(\ell)=t^{k}$, and:
$$
a(\ell)=q(q\mp 1)\prod (s^{m(s)}-2),
%{\textstyle a(t)=q(q\mp 1)\prod (s^{m(s)}-2)},
$$
the product over the $s\in\Omega(c)$ such that $s\not=t$.
\item The homology $H_0(X_G;F)$ has summand $\Z_2^{(q-1)(q+1)}$.
\end{enumerate}
\end{corollary}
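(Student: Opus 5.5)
The plan is to apply Theorem \ref{theorem:general:result:CNgroups}, since $\mathrm{PSL}_2(2^m)$ is a simple $CN$-group, and read off the contributions class by class from Table \ref{table:PSL:q:2}. By that theorem, a $p$-element $g$ with $|C_g|=q_0q_1\ldots q_i$ contributes $\Z_g^{\prod_{k\neq j}(q_k-2)}$ in degree $i$, where $q_j$ is the $p$-part of $|C_g|$. So we only need three things for each vertex $g$ of $X_G$:
\begin{itemize}
\item its order $|g|$;
\item the factorisation of $|C_g|$ into prime powers;
\item the number of such vertices.
\end{itemize}

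First we dispose of the unipotent class $D$. These elements have order $2$ and centralizer of order $q$, a $2$-group. Theorem \ref{theorem:general:result:CNgroups} (equivalently Corollary \ref{corollary:clique:complex:general}(iii)) then gives a contribution $\Z_2$ in degree $0$ for each of the $(q-1)(q+1)$ conjugates. This proves (ii).

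Next we treat the torus elements $A^\ell$ and $B^\ell$ with $c=q-1$ or $q+1$ respectively. Every nonidentity power $A^\ell$ has centralizer exactly the cyclic torus $\langle A\rangle$ of order $c=\prod_{\Omega(c)}s^{m(s)}$, and similarly for $B$. So $|C_g|=c$ has $\omega(c)$ prime-power factors, and the contribution lands in degree $\omega(c)-1$.

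Such a $g=A^\ell$ is a vertex exactly when its order $[c,\ell]=c/\gcd(c,\ell)$ is a prime power $t^k$, i.e.\ $\gcd(c,\ell)=c/t^k$. This is the defining condition of $\Lambda(c)$, with $b(\ell)=t^k=|g|$, so $\Z_g=\Z_{t^k}$. The $t$-part of $|C_g|$ is $t^{m(t)}$. Omitting it, each such element contributes $\Z_{t^k}^{\prod_{s\neq t}(s^{m(s)}-2)}$. Multiplying by the class size $q(q\pm1)$ from the table (that is, $q(q+1)$ for $c=q-1$ and $q(q-1)$ for $c=q+1$) gives exactly $a(\ell)=q(q\mp1)\prod_{s\ne t}(s^{m(s)}-2)$, and summing over $\ell\in\Lambda(c)$ gives (i).

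Two bookkeeping points need care. The range of $\ell$ should be indexed so that each conjugacy class of powers is counted once: $A^\ell\sim A^{-\ell}$, which is why $1\le\ell\le (q-2)/2$ and $1\le\ell\le q/2$. We should also note that $k\ge1$ is forced, since $\ell$ ranges over nonidentity powers. Non-vertex elements of the torus (those whose order is not a prime power) are faces but contribute nothing, since the sum in Theorem \ref{theorem:general:result:CNgroups} is only over $p$-elements.

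Finally, the non-vanishing claim: degree $0$ always receives the $D$ contribution. Degree $\omega(c)-1$ receives contributions from the elements of prime order $t$ in the torus, which always lie in $\Lambda(c)$. These contributions are nonzero provided the factors $s^{m(s)}-2$ are nonzero for $s\neq t$, and this holds since $c$ is odd, so every $s^{m(s)}\ge3$.

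The main obstacle is this bookkeeping between conjugacy-class representatives $\ell$ and actual vertices, and checking that classes of $A$- and $B$-powers are never fused. There is also a degenerate case where $\omega(c)=1$ (e.g.\ $m=2,3$): then the degree-$(\omega(c)-1)=0$ contribution merges with (ii) and the empty product is $1$. This is consistent with the formula, but should be remarked on.
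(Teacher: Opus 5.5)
Your proposal is correct and follows the paper's proof: apply Theorem~\ref{theorem:general:result:CNgroups} to the data of Table~\ref{table:PSL:q:2}. The $(q-1)(q+1)$ involutions give $\Z_2$ in degree $0$, and each of the $q(q\pm1)$ conjugates of a prime-power-order torus element $A^\ell$ or $B^\ell$ gives $\prod_{s\neq t}(s^{m(s)}-2)$ copies of $\Z_{t^k}$ in degree $\omega(c)-1$. Your non-vanishing check via $\ell=c/t\le r$ and $s^{m(s)}\ge 3$ is more explicit than the paper's. One small inaccuracy: $\ell=0$ does lie in $\Lambda(c)$ as stated, with $k=0$, so $k\ge1$ is not forced; this is harmless because that term contributes only $\Z_1=0$.
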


The three cases in the corollary are not necessarily exclusive.
Again by Catalan's conjecture,
if $q-1$ is a prime power
then in fact it is 
a (Mersenne) prime $s$. Then 
$\omega(q-1)-1=0$ and $H_0(X_G;F)$ is a direct sum with
both $\Z_2^{(q-1)(q+1)}$
and $\Z_s^{q(q+1)}$ components. Similarly if $q+1$ is
a prime power then it is either $9$ or
a (Fermat) prime $s$, and $H_0$ has additional $\Z_s^{q(q-1)}$
components. 
As mentioned above, the case $m=2$ is the only
one where \emph{both\/} $q-1$ and $q+1$ are prime. In this case
all the homology is concentrated in degree $0$.

\begin{proof}
The corollary follows from the data in Table \ref{table:PSL:q:2}; we briefly recall the details. 
%(This theorem is for $\mathrm{SL}_2$ in characteristic $2$,
%which is the same group as $\mathrm{PSL}_2$.)
The group
$G$ has $q$ non-identity conjugacy classes: there is a single class of 
an element $D$ of order $2$, as well as $\frac{q-2}{2}$ classes 
from powers
of an element $A$ of order $q-1$, and $\frac{q}{2}$ classes from powers
of an element $B$ of order $q+1$. 

We now apply Theorem \ref{theorem:general:result:CNgroups}.
That $H_*(X_G;F)$ has only three non-vanishing degrees follows
as there are only three possible sizes of non-identity centralizers. 
Each of the $(q-1)(q+1)$
elements of order $2$ contributes a $\Z_2$ to $H_0$.
The element $A^\ell$ has prime power order for the 
$\ell$ in $\Lambda(c)$ with $c=q-1$,
and for such $\ell$ 
the element has order $t^k$.
Each of the $q(q+1)$ elements in this conjugacy
class thus contributes $\prod_{s\not= t} (s^{m(s)}-2)$
%\begin{equation}
%\label{PSL2:char2:equation100}
%\prod_{s\not= t} (s^{m(s)}-2),
%%(u_1-2)\ldots\widehat{(u_r-2)}\ldots(u_{w_1}-2),
%\end{equation}
copies of $\Z_b$, by Theorem 
\ref{theorem:general:result:CNgroups}(i), whence
the homology in degree $i=\omega(q-1)-1$.
(If $\omega(q-1)=1$ then $\prod_{s\not= t} (s^{m(s)}-2)$
is just $1$, and there are $q(q+1)$ copies of $\Z_b$
in degree $0$.) The exact same analysis applied to the
elements $B^\ell$ gives the advertised homology in 
degree $\omega(q+1)-1$.
\qed
\end{proof}

The special linear group
$G=\mathrm{PSL}_2(q)$, when $q$ is a power of an odd prime,
has order $\frac{1}{2}q(q-1)(q+1)$ and
conjugacy class data that we previously gave in Table \ref{table:PSL:q:odd}. 
The proof of the following is analogous to that for Corollary
\ref{corollary:PSL_2:characteristic2:groups}.

\begin{corollary}
\label{corollary:PSL_2:characteristicp:groups}
Let $G=\mathrm{PSL}_2(p)$, for $p$ a Mersenne or
Fermat prime, 
and let $F$ be the order sheaf on 
$X_G$. Let $c=\frac{1}{2}(p\pm 1)$ with a ``$+$'' if
$p$ is Fermat and a ``$-$'' if $p$ is Mersenne.
%if $p$ is a Mersenne prime
%or $c=\frac{1}{2}(p+1)$ if $p$ is a Fermat prime.
Then 
$H_*(X_G;F)$ is non-vanishing in degrees
$0$ and $\omega(2c)-1$.
\begin{enumerate}[(i).]
\item Write 
the prime factorisation
$c=\prod_{\Omega(c)} s^{m(s)}$ and
let $r=\frac{p-3}{2}$ when $c=\frac{1}{2}(p-1)$, or $r=\frac{p-1}{4}$
when $c=\frac{1}{2}(p+1)$. 
Let $\Lambda(c)$ be the set of $\ell$ in the
range $0\leq\ell\leq r$ for which $\gcd(c,\ell)=c/t^{k}$
for some
$t=t(\ell)\in\Omega(c)$
and $k=k(\ell)$ with $0\leq k\leq m(t)$.
Then for 
$i=\omega(2c)-1$, the homology $H_i(X_G;F)$ has summand
$\bigoplus_{\Lambda(c)}\Z^{a(\ell)}_{b(\ell)}$,
where $b(\ell)=t^{k}$, and:
$$
a(\ell)=p(p\mp 1)\prod (s^{m(s)}-2),
$$
the product over the $s\in\Omega(c)$ such that $s\not=t$.
\item $H_0(X_G;F)$ has summands:
$$\Z_2^{\frac{p^2\pm p}{2}},
\,\,\,
\Z_p^{p^2-1}
\text{ and }
\bigoplus_{\ell=1}^{u}
\Z_{n(\ell)}^{p^2\pm p},
$$
where $n(\ell)=[\frac{p\mp 1}{2},\ell]$,
and $u=\frac{p-3}{4}$ when $c=\frac{1}{2}(p-1)$, 
or $u=\frac{p-5}{4}$ when $c=\frac{1}{2}(p+1)$.
\end{enumerate}
\end{corollary}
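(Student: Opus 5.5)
The plan is to mirror the proof of Corollary \ref{corollary:PSL_2:characteristic2:groups}: feed the conjugacy class data of Table \ref{table:PSL:q:odd} into Theorem \ref{theorem:general:result:CNgroups}. The input is that $G=\mathrm{PSL}_2(p)$ is a $CN$-group, so every $p'$-element has nilpotent (in fact cyclic or dihedral $2$-group) centralizer. For each conjugacy class of prime power elements I will read off three things: the class size, the centralizer order, and its factorisation $q_0q_1\ldots q_i$ into distinct prime powers. Each element then contributes $\Z_g^{\prod_{j'\neq j}(q_{j'}-2)}$ in degree $i$.

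\textbf{Degree $0$ contributions.} These are the elements whose centralizer is a $p$-group, in three families.
\begin{enumerate}[(a).]
\item The unipotent classes $H_1,H_2$ have centralizer of order $p$. They give $2\cdot\frac{p^2-1}{2}=p^2-1$ copies of $\Z_p$.
\item The involution $D$ has centralizer dihedral of order $p\mp1$. Since $p$ is Fermat or Mersenne, $p\mp1$ is a power of $2$, so the centralizer is a $2$-group. The class has size $\frac{p(p\pm1)}{2}$, giving the $\Z_2$ summand. I will need to keep track of which sign corresponds to which case.
\item On the side where $\frac{p\mp1}{2}$ is a power of $2$, the powers of $A$ (Fermat case) or $B$ (Mersenne case) other than $D$ are $2$-elements. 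Their centralizer is the cyclic torus, a $2$-group. Each such class has size $p(p\pm1)=p^2\pm p$, and the element order is $n(\ell)=[\frac{p\mp1}{2},\ell]$. The number of such classes is $m_1$ or $m_2$ from the table, which should match $u$.
\end{enumerate}

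\textbf{The other torus.} This is the one of order $c$. When $p$ is Fermat, $c=\frac{p+1}{2}$ is odd. When $p$ is Mersenne, $c=\frac{p-1}{2}$ is odd. The powers $A^\ell$ or $B^\ell$ with prime power order are exactly those $\ell\in\Lambda(c)$, with order $t^k$. The centralizer is the whole cyclic group of order $c=\prod s^{m(s)}$, so by Theorem \ref{theorem:general:result:CNgroups} each element contributes $\prod_{s\neq t}(s^{m(s)}-2)$ copies of $\Z_{t^k}$ in degree $\omega(c)-1$. Multiplying by the class size $p(p\mp1)$ gives $a(\ell)$. The degree is stated as $\omega(2c)-1$, which I will need to reconcile with $\omega(c)-1$ (since $c$ is odd, $\omega(2c)=\omega(c)+1$). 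This discrepancy is the point to check carefully against the statement. Elements of this torus that are not prime power are faces, not vertices, and so contribute nothing.

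\textbf{Main obstacle.} The main obstacle is bookkeeping rather than ideas. The parity of $\frac{p\pm1}{2}$ must match the $\pm$ signs and the ranges $r$ and $u$ (via $m_1,m_2$ and the identification $D=A^{(p-1)/4}$ or $B^{(p+1)/4}$). One must also confirm that no torus element of composite order is a vertex. Small cases such as $p=3,5,7$, where $c$ may be $1$ or a prime power, should be checked against the general formula; in those cases the degree $\omega(c)-1$ collapses to $0$.
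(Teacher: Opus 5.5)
Your route is the paper's own. The paper's proof of this corollary is just the remark that it is analogous to Corollary \ref{corollary:PSL_2:characteristic2:groups}: read the class data off Table \ref{table:PSL:q:odd} and feed it into Theorem \ref{theorem:general:result:CNgroups}. Your three degree-$0$ families and your treatment of the odd torus are exactly that computation.

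\textbf{The degree.} The loose end you left, reconciling $\omega(c)-1$ with the stated $\omega(2c)-1$, cannot be reconciled, and it should be settled in your favour.

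For $p\geq 5$ the number $c$ is odd, so $\omega(2c)-1=\omega(c)$. But every nontrivial element of $G$ either has order $p$ or lies in a conjugate of one of the two tori, and one of those tori is a $2$-group. Hence $\dim X_G=\omega(c)-1$, and $H_{\omega(c)}(X_G;F)=0$. Two concrete checks:
\begin{enumerate}[(a).]
\item $p=17$ gives $c=9$. Here $\mathrm{PSL}_2(17)$ is an EPPO-group, so all of its sheaf homology sits in degree $0$.
\item $p=31$ gives $c=15$. The $\Z_3$ and $\Z_5$ summands land in degree $1$, not degree $2$.
\end{enumerate}
So the statement contains an off-by-one slip. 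Your argument correctly proves the corrected version, with $\omega(c)-1$ in place of $\omega(2c)-1$. You should state that version rather than try to match the printed one.

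\textbf{The Mersenne range.} The range bookkeeping you postponed exposes a second slip. In the Mersenne case the odd torus is $\langle A\rangle$, and the range must be $r=m_1=\frac{p-3}{4}$, not $\frac{p-3}{2}$. The printed range runs over all of $\langle A\rangle$. Since $A^\ell$ and $A^{c-\ell}$ are conjugate via $w$, it counts each class of size $p(p+1)$ twice.

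\textbf{The case $p=3$.} Here $p$ is both Fermat and Mersenne, and only the Mersenne reading of the signs gives the right answer. The Fermat reading gives $c=2$, a non-integral $r$, and the wrong count $\Z_2^{6}$ of involutions instead of $\Z_2^{3}$.
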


Finally, we have the family of exceptional groups of Lie type,
the Suzuki groups $\mathrm{Suz}(q)$, where $q=2^{2m+1}$
and $|\mathrm{Suz}(q)|=q^2(q-1)(q^2+1)$. The relevant
conjugacy class information is contained in 
Table \ref{table:Suzuki}; this can be obtained
from \cite{MR0136646}*{Theorem 13} or 
\cite{MR0543261}*{Section 1}.
There are $q+2$ non-identity conjugacy classes: a class
of an element $D$ of order $2$, two classes of elements 
of order $4$ with representatives $H^{\pm 1}$, and three
families of classes with representatives certain powers
of elements $A,B$ and $C$ that have orders
$q-1,q+r+1$ and $q-r+1$, with $r=2^{m+1}$.
The centralizer orders are computed from the 
character table using the column orthogonality relations.
Note that $(q+r+1)(q-r+1)=q^2+1$.

\begin{table}[t]
    \centering
\begin{tabular}{ccccccc}\hline
    $\mathrm{Suz}(q)$&$D$&$H$&$H^{-1}$
    &$A^\ell\,(1\leq\ell\leq\frac{q-2}{2})$
    &$B^\ell\,(1\leq\ell\leq\frac{q+r}{4})$
    &$C^\ell\,(1\leq\ell\leq\frac{q-r}{4})$\vrule width 0mm height 4 mm depth
  2mm\\\hline
  $g$&$2$&$4$&$4$&$[q-1,\ell]$&$[q+r+1,\ell]$
  &$[q-r+1,\ell]$\vrule width 0mm height 4 mm depth
  0mm\\
    $g^G$&$Q$&$\frac{1}{2}qQ$&$\frac{1}{2}qQ$
    &$q^2(q^2+1)$&$q^2(q-1)(q-r+1)$
    &$q^2(q-1)(q+r+1)$\vrule width 0mm height 5 mm depth
  0mm\\
    $C_g$&$q^2$&$2q$&$2q$&$q-1$&$q+r+1$&$q-r+1$\vrule width 0mm height 5 mm depth
  2mm\\
    \hline
  \end{tabular}
      \caption{Conjugacy classes of the Suzuki groups 
      $\mathrm{Suz}(q)$ for 
      $q=2^{2m+1}$, where $r^2=2q$ and $Q=(q-1)(q^2+1)$.}
    \label{table:Suzuki}
\end{table}

\begin{corollary}
Let $G$ be the Suzuki groups $\mathrm{Suz}(q)$,
where $q=2^{2m+1}\,(m\geq 1)$
and let $F$ be the order sheaf on $X_G$.
Let $c=q-1$ or $q\pm r+1$
where $r=2^{m+1}$.
Then
$H_*(X_G;F)$ is non-vanishing in degrees
$0$ and $\omega(c)-1$.
\begin{enumerate}[(i).]
    \item 
Write the prime factorisation
$q-1=\prod_{\Omega(c)} s^{m(s)}$
and let $r=\frac{q-2}{2}$ when $c=q-1$
or $r=\frac{q\pm r}{4}$ when $c=q\pm r+1$.
Let $\Lambda(c)$ be the set of $\ell$ in the
range $0\leq\ell\leq r$ 
for which $\gcd(c,\ell)=c/t^{k}$
for some
$t=t(\ell)\in\Omega(c)$
and $k=k(\ell)$ with $0\leq k\leq m(t)$.
Then for 
$i=\omega(c)-1$, the homology $H_i(X_G;F)$ has summand
$\bigoplus_{\Lambda(c)}\Z^{a(\ell)}_{b(\ell)}$,
where $b(\ell)=t^{k}$ and
$a(\ell)=a(c)\prod (s^{m(s)}-2)$,
the product over the $s\in\Omega(c)$ such that $s\not=t$,
and where:
$$
a(c)=
  \left\{
\begin{array}{ll}
q^2(q^2+1),  & c=q-1,\\
q^2(q^2-1)(q\mp r+1),  & c=q\mp r+1.\\
\end{array}
\right.
$$
    \item $H_0(X_G;F)$ has summands $\Z_2^{(q-1)(q^2+1)}$
    and $\Z_4^{q(q-1)(q^2+1)}$.
\end{enumerate}
\end{corollary}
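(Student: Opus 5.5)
The plan is to follow the proof of Corollary \ref{corollary:PSL_2:characteristic2:groups}, this time with the data of Table \ref{table:Suzuki} fed into Theorem \ref{theorem:general:result:CNgroups}. Since $\mathrm{Suz}(q)$ is a simple $CN$-group, that theorem applies. The computation then reduces to two tasks: listing the $p$-elements $g$ by conjugacy class, and reading off the factorisation of $|C_g|$ into prime powers.

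The first step handles the $2$-elements. The class $D$ has $Q=(q-1)(q^2+1)$ elements with $|C_D|=q^2$, a $2$-group. The classes $H^{\pm1}$ consist of elements of order $4$ with $|C_H|=2q$, again a $2$-group, and together they contain $qQ$ elements. All of these elements therefore satisfy the degree $0$ condition $|C_g|=q_0$ with $q_0$ a power of $2$. Each contributes one copy of $\Z_g$, which gives the summands $\Z_2^{(q-1)(q^2+1)}$ and $\Z_4^{q(q-1)(q^2+1)}$ of part (ii).

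The second step handles the odd-order elements. Each nontrivial power of $A$, $B$ or $C$ has centralizer the full cyclic torus, of order $c=q-1$, $q+r+1$ or $q-r+1$ respectively. One has to check that these three numbers are odd and pairwise coprime, which follows from $(q+r+1)(q-r+1)=q^2+1$ and $\gcd(q-1,q^2+1)=\gcd(q-1,2)=1$. A power $A^\ell$ (respectively $B^\ell$, $C^\ell$) is a $t$-element exactly when its order $[c,\ell]$ is a power $t^k$ of a single prime, which is the condition $\gcd(c,\ell)=c/t^k$ defining $\Lambda(c)$. For such $\ell$, Theorem \ref{theorem:general:result:CNgroups} applies with $|C_g|=c=\prod s^{m(s)}$ and $i=\omega(c)-1$, so each element contributes $\Z_{t^k}^{\prod_{s\neq t}(s^{m(s)}-2)}$. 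Multiplying by the class sizes from Table \ref{table:Suzuki} gives $a(c)$: $q^2(q^2+1)$ for $A$, and $q^2(q-1)(q\mp r+1)$ for the $B$ and $C$ classes. Here the factor $q^2-1$ printed in the statement should presumably read $q-1$, and this must be reconciled with the table.

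The main obstacle is the bookkeeping. Each conjugacy class must be counted exactly once, so the ranges $1\le\ell\le\frac{q-2}{2}$, $\frac{q+r}{4}$, $\frac{q-r}{4}$ given in the table must actually index distinct classes, with inverse pairs identified. The degree $0$ contributions must also be accounted for: when $\omega(c)=1$ the product is empty, so these summands land in degree $0$, exactly as in the remark following Corollary \ref{corollary:PSL_2:characteristic2:groups}. Once this is checked, there are only a few distinct centralizer orders, so the vanishing outside degrees $0$ and $\omega(c)-1$ follows directly from the theorem.
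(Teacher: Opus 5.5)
This is the paper's argument. Its proof simply says the result is analogous to Corollary~\ref{corollary:PSL_2:characteristic2:groups}: read the $2$-element classes and the torus classes off Table~\ref{table:Suzuki} and feed them into Theorem~\ref{theorem:general:result:CNgroups}, exactly as you propose. Your correction of $q^2-1$ to $q-1$ is right, and your per-class formula agrees with the table; indexed by $c$ as in the statement it reads $a(c)=q^2(q-1)(q\pm r+1)$ for $c=q\mp r+1$, so the sign in the last printed factor is also a slip.
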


The contribution to homology made by the 
elements in the conjugacy classes of the $B^\ell$ and 
the $C^\ell$ have been amalgamated together in part (i) of the corollary.
The proof is completely analogous to the proof of 
Corollary \ref{corollary:PSL_2:characteristic2:groups}.

%%%%%%%%%%%%%%%%%%%%%%%%%%%%%%%%%%%%%%%%%%%%
%
%

%
%
%
%%%%%%%%%%%%%%%%%%%%%%%%%%%%%%%%%%%%%%%%

\end{document}